\newtheorem{Theorem}{Theorem}[section]
\newtheorem{Lemma}[Theorem]{Lemma}
\newtheorem{Proposition}{Proposition}[section]
\newtheorem{Remark}{Remark}[section]
\newtheorem{Definition}{Definition}[section]
\numberwithin{equation}{section} \allowdisplaybreaks
\allowdisplaybreaks \setlength{\textwidth}{15cm}
\date{\today}
\begin{document}
\author{Yi Peng}
\address{School of Sciences, Southwest Petroleum University, Chengdu 610500 China.}
\email{pengyi2573@126.com}

\author{Huaqiao Wang}
\address{College of Mathematics and Statistics, Chongqing University, Chongqing 401331 China.}
\email{wanghuaqiao@cqu.edu.cn}

\author{Chenlu Zhang}
\address{College of Mathematics and Statistics, Chongqing University, Chongqing 401331 China.}
\email{20220601002@stu.cqu.edu.cn}

\title[Non-uniqueness of the Hall-MHD equations]
{Non-uniqueness of weak solutions to the 3D Hall-MHD equations on the plane}
\thanks{Corresponding author: wanghuaqiao@cqu.edu.cn}
\keywords{The 3D Hall-MHD equations on the plane, weak solutions, non-uniqueness, convex integration.}
\subjclass[2010]{35Q35; 76W05; 35D30.}

\begin{abstract}
We prove the non-uniqueness of weak solutions with non-trivial magnetic fields to the 3D Hall-MHD equations on the plane in the space $C^0_t L_x^2$ through the convex integration scheme and by constructing new errors and new intermittent flows. In particular, based on the construction of 3D intermittent flows, we obtain the $2\frac{1}{2}$D Mikado flows through a projection onto the plane. Moreover, we prove that the constructed weak solution do not conserve the magnetic helicity and find that weak solutions of the ideal Hall-MHD equations in $C^{\bar{\beta}}_{t,x}$ ($\bar{\beta}>0$) are the strong vanishing viscosity and resistive limit of weak solutions to the Hall-MHD equations.
\end{abstract}

\maketitle
\section{Introduction and main result}
\subsection{Introduction}
The hypo viscous and resistive Hall magnetohydrodynamics (Hall-MHD) equations have the following form:
\begin{equation}\label{Hall-mhd}
\begin{cases}
\partial _tu+\nu_1(-\Delta)^{\alpha_1} u+u\cdot\nabla u+\nabla P= (\nabla\times B)\times B,&{\rm div}u = 0, \\
\partial _tB+\nu_2(-\Delta)^{\alpha_2} B-\nabla\times (u\times B)+\eta\nabla\times\left( (\nabla\times B)\times B\right)=0,&{\rm div}B=0,\\
(u,B)|_{t=0}=(u_0,B_0),
\end{cases}
\end{equation}
where $\alpha_1$, $\alpha_2$ are non-negative constants, $u,B$ and $P$ represent the velocity field, magnetic field and pressure of the fluid, respectively. The positive constants $\nu_1$ and $\nu_2$ represent the viscosity and resistivity coefficient, respectively. While the constant $\eta>0$ measures the magnitude of the Hall effect compared to the typical length scale of the fluid. The Hall-MHD equations has been studied for a long time in physics. The Hall term is key to solve the magnetic field reconnection in space plasmas \cite{f,hg}, star formation \cite{bt,w} and geo-dynamo \cite{mgm}, etc. Also, in the field of mathematics, the Hall-MHD equations attract a lot of attention and there is a large amount of results, see \cite{adfl,cdl,CL,CS,CWW,CW1,CW,DM,DM1,DH1,dt,DH} and the references therein, for example.

For the non-uniqueness of weak solutions to the Hall-MHD equations, there is a small amount of researches. Dai \cite{DM1} proved the non-uniqueness of Leray--Hopf weak solutions to the standard 3D Hall-MHD equation. Concerning the 3D generalized Hall-MHD equations with the viscosity and resistivity exponent $\alpha\in [\rho,5/4)$, where $\rho\in(1,5/4)$, the authors \cite{pw} obtained the non-uniqueness of a class of weak solutions, which are smooth in time outside a set whose Hausdorff dimension strictly less than 1.

If $\eta=0$, \eqref{Hall-mhd} will be the MHD system:
\begin{equation}\label{mhd}
\begin{cases}
\partial _tu+\nu_1(-\Delta)^{\alpha_1} u+u\cdot\nabla u+\nabla P= (\nabla\times B)\times B,&{\rm div}u = 0, \\
\partial _tB+\nu_2(-\Delta)^{\alpha_2} B-\nabla\times (u\times B)=0,&{\rm div}B=0,\\
(u,B)|_{t=0}=(u_0,B_0).
\end{cases}
\end{equation}
In 3D space, there are some interesting results about the non-uniqueness of weak solutions to the MHD equations. For the case of $\alpha_1=\alpha_2=\alpha\in(0,5/4)$, Li-Zeng-Zhang \cite{lzz} established the non-uniqueness of weak solutions by using convex integration. Later, they obtained non-uniqueness of weak solutions in $L_t^\gamma L_x^\infty$, $1\leq\gamma<2$, which is sharp in view of the classical Ladyzhenskaya-Prodi-Serrin criteria, and constructed weak solutions to the hyper viscous and resistive MHD beyond the Lions exponent, in the space $L^\gamma_tW^{s,p}_x$, where the exponents $(s,\gamma,p)$ lie in two supercritical regimes \cite{lzz1}. Furthermore, they also provided the observation that the Taylor's conjecture fail in the limit of a sequence of non-Leray--Hopf weak solutions for the hyperdissipative MHD equations, which contrasted to the positive result of weak ideal limit \cite{fl}, namely, the weak limit of Leray--Hopf solutions to the MHD system. Recently, Beekie-Buckmaster-Vicol \cite{bbv} constructed weak solutions to the ideal MHD equations based on a Nash-type convex integration scheme with intermittent building blocks. Moreover, they also showed that the magnetic helicity of those kind of weak solutions is not a constant, which implies that Taylor's conjecture is false. For more results about non-uniqueness of the 3D MHD system, see \cite{df,my,ny} and the references therein.

While the magnetic field is ignored, \eqref{Hall-mhd} reduces to the Navier--Stokes equation (NSE):
\begin{equation}\label{ns}
\begin{cases}
\partial _tu+\nu_1(-\Delta)^{\alpha_1} u+{\rm div}(u\otimes u)+\nabla p=0,{\rm div}u = 0, \\
u|_{t=0}=u_0.
\end{cases}
\end{equation}
Leray \cite{leray} and Hopf \cite{h} constructed the weak solutions to the standard 3D NSE, that is $\alpha_1=1$, which is now referred to as Leray--Hopf weak solutions. However, the uniqueness still remains open. In the breakthrough work, Buckmaster-Vicol \cite{bv} proved that the weak solution of the 3D NSE is not unique in $C_tL^2_x$ by using intermittent convex integration. Later, Albritton-Br\'{u}e-Colombo \cite{ABC} showed non-uniqueness of Leray--Hpof weak solutions of the NSE with a special force by constructing a background solution. While for the stationary NSE, Luo and Cheskidov \cite{CL, Luo} proved the non-uniqueness of weak solutions.

For the hyper-viscous NSE, J.-L. Lions \cite{l} proved that for any $d\geq2$ dimensional NSE \eqref{ns}, the Leray--Hopf solution is unique when $\alpha_1\geq\frac{d+2}{4}$. In the three-dimensional space, the Lions exponent $\alpha_1=5/4$ is an important criterion for the well-posedness or the ill-posedness to the fractional NSE \eqref{ns}. If $\alpha_1$ is slightly less than $5/4$, Tao \cite{t} obtained the global well-posedness for \eqref{ns} with logarithmically supercritical dissipation. For the super-critical regime $\alpha_1<1/5$ and $\alpha_1<1/3$, the non-uniqueness of Leray solutions to \eqref{ns} is proved by \cite{cdd} and \cite{d}, respectively. For the case of $1<\alpha_1<5/4$, Luo-Titi \cite{lt1} obtained the non-uniqueness of the very weak solution for the system \eqref{ns}.  For $1\leq\alpha_1<5/4$, Buckmaster-Colombo-Vicol \cite{bcv} constructed the non-Leray--Hopf weak solutions in the super-critical spaces $C^0_tL^2_x$, and they also proved such weak solutions are smooth outside a fractal set of singular times with Hausdorff dimension strictly less than 1. For the 3D hyper-viscous NSE \eqref{ns} when the viscosity $\alpha_1$ is beyond the Lions exponent $5/4$, in the work of Li-Qu-Zeng-Zhang \cite{lqzz}, the non-uniqueness was proved in the supercritical spaces $L^\gamma_tW^{s,p}_{x}$ and also yielded that the sharpness at two endpoints of the Ladyzhenskaya-Prodi-Serrin criteria condition.

For the standard 2D NSE, that is, $\alpha_1=1$ in \eqref{ns}, there exists a unique Leray--Hopf weak solution, see \cite{temam,leray}, for example. However, the weaker solution may not unique, even if $\alpha_1\geq1$. Recently, Cheskidov-Luo \cite{cl} proved the non-uniqueness of weak solutions to the standard NSE in the super-critical space $L^\gamma L^\infty$ with $1\leq\gamma<2$, which is sharp in view of the classical Ladyzhenskaya-Prodi-Serrin criteria. Later, Luo-Qu \cite{lq} proved the non-uniqueness of weak solutions in $C_t^0L^2_x$ to the 2D hyperdissipative NSE \eqref{ns} for $0\leq\alpha_1<1$.  Cheskidov-Luo \cite{cl1} proved the sharp non-uniqueness in $C_tL^p_x$, $1\leq p<2$. Very recently, for $\alpha_1\in [1,\frac{3}{2})$, Du-Li \cite{dl} and Li-Tan \cite{lt} proved the non-uniqueness of weak solutions to \eqref{ns} in the spaces  $L^\gamma_tW^{s,p}_x$ (where the exponents $(s,\gamma,p)$ lie in two supercritical regimes) and $L^\gamma L^p$ ($\frac{4\alpha_1-4}{\gamma}+\frac{2}{p}>2\alpha_1-1$), respectively, which are sharp in view of the generalized Ladyzhenskaya-Prodi-Serrin criteria.

The existence and uniqueness of the standard 2D MHD system is obtained by Sermange-Temam \cite{st} for $\nu_1>0$, $\nu_2>0$ in \eqref{mhd}. While for the non-uniqueness of weak solution to the 2D ideal MHD system, that is $\nu_1=\nu_2=0$, symmetry reduction techniques are not applicable for the magnetic stress error. Miao-Nie-Ye\cite{mny} obtained the non-uniqueness result by splitting the velocity field, such that the spatial decoupling reduces the relaxed system to one that only contains Reynolds stress error.

However, for Hall-MHD equations, the Hall term decouples from the rest of the system in the 2D case. For this reason, physicists such as Donato et al. in \cite{dsdccm} have turned to the $2\frac{1}{2}$D case in which a pair of
\begin{align}\label{1.2}
u(x,t)=(u_1,u_2,u_3)(x_1,x_2,t)\quad{\rm and} \quad B(x,t)=(B_1,B_2,B_3)(x_1,x_2,t)
\end{align}
solves \eqref{Hall-mhd}.
In this paper, we first consider the non-uniqueness of weak solutions to the $2\frac{1}{2}$D flows as in \cite{mb} for Hall-MHD \eqref{Hall-mhd} on $\mathbb{T}^2\times[0,T]$, where $\alpha_1\in(0, \frac{3}{4})$, $\alpha_2\in(0, \frac{5}{4})$. The operator $\nabla$ is given by $\nabla=(\partial_{x_1},\partial_{x_2},0)^T$ so that
$${\rm div}u=\nabla\cdot u:=\partial_{x_1}u_1+\partial_{x_2}u_2,\quad\quad \nabla P=(\partial_{x_1}P,\partial_{x_2}P,0)^T,$$
and
$${\rm curl}B=\nabla\times B:=(\partial_{x_2}B_3,-\partial_{x_1}B_3,\partial_{x_1}B_2-\partial_{x_2}B_1)^T.$$

The $2\frac{1}{2}$D Hall-MHD systems are different from NS or MHD. In the 2D space, the existence and uniqueness of the Leray--Hopf weak solution to the standard NS  and MHD equations are obtained by Leray \cite{leray} and Sermange-Temam \cite{st} for $\nu_1>0$, $\nu_2>0$, and such results can be extended to the $2\frac{1}{2}$D case as well. Indeed, for these equations, planar component equations and the equation for $u_3$ or $B_3$ are decoupled. But for the $2\frac{1}{2}$D Hall-MHD systems, planar part and the third component are related by the Hall term. For such flow, Chae-Wolf\cite{cw} have proved the existence of Leray--Hopf weak solution, while the uniqueness is an open problem, see \cite{dt1,y}, for example. In the present paper, we prove the non-uniqueness of the weak solutions defined in Definition \ref{def1} to the $2\frac{1}{2}$D Hall-MHD equations \eqref{Hall-mhd}.

In analogy with the Onsager conjecture of the Euler systems, we also concern with the magnetic helicity:
\begin{align}\label{helicity}
\mathcal{H}_{B,B}(t)=\int_{\mathbb{T}^2} A(t,x)B(t,x)dx,
\end{align}
where $A$ is the mean-free periodic vector field satisfying ${\rm curl}A=B$. And for the $2\frac{1}{2}$D Hall-MHD systems, the magnetic helicity also defined as \eqref{helicity}, see \cite{lhf}. To the best knowledge of authors, there is no result relating to Onsager type conjecture for the magnetic helicity of the $2\frac{1}{2}$D Hall-MHD systems. While for the total energy
$$\mathcal{E}(t)=\frac{1}{2}\int |u(t,x)|^2+|B(t,x)|^2dx$$
of the ideal Hall-MHD equations, there are only two positive results \cite{kdb,wyy}. For the two-dimensional ideal MHD system, the magnetic helicity is replaced by the mean-square magnetic potential $\int_{\mathbb{T}^2}|\psi|^2dx$, where $\psi$ is the magnetic potential such that $B=\nabla^{\bot}\psi$. Furthermore, this quantity is conserved for weak solutions in $C_{w}L^2$, see \cite{fls}. However, for the Hall-MHD systems defined on the plane, the similar positive result may fail to the weak solutions, even in a smaller space. More precisely, we construct infinitely many weak solutions to the $2\frac{1}{2}$D Hall-MHD equation in the space $C_t^0L^2_x$, which do not conserve the magnetic helicity.

In the end of this subsection, we list notations which will be used in the following text.
\textbf{Notations:}
\begin{enumerate}
  \item For any positive $A$ and $B$, we use the notation $A\lesssim B$ to mean that there exists a positive constant $C$ such that $A\leqslant CB$.
  \item For every $p\in[1,\infty]$ and $s\in \mathbb{R}$, we denote the following short notations by
  $$L_x^{p}:=L_x^{p}(\mathbb{T}^2),\quad W_x^{s,p}:=W_x^{s,p}(\mathbb{T}^2).$$
  \item For any normed space $X$, we employ the notation $L^{p}\left([0,T],X\right)$ to denote the space of functions $f$ such that for almost all $t\in (0,T)$, $f(t)\in X$ and $\left\|f(t)\right\|_{X}\in L^{p}(0,T)$. %We simply denote the notation $L^{p}\left([0,T],X\right)$ by $L^{p}_{T}X$.
      In particular, we write $L^p_{t,x}:=L^p_tL^p_x$ and $C_{t,x}:=C_tC_x$.
  \item The homogeneous Sobolev space $\dot{H}^{s}(\mathbb{R}^{3})$, for any $s\in \mathbb{R}$, as the subspace of tempered distributions whose Fourier transform is locally integrable and the following norm is finite:
      $$\left\|f\right\|_{\dot{H}^{s}}=\left(\int_{\mathbb{R}^{3}}\left|\xi\right|^{2s}\left|\hat{f}(\xi)\right|^{2}d\xi\right)^{\frac{1}{2}}.$$
  \item For any set $A\subseteq[0,T]$, set
  $$N_{\varepsilon_*}\left(A\right):=\{t\in[0,T]: \exists\; s\in A \;{\rm s.t}\; |t-s|\leq\varepsilon_*\}.$$
  \item We use the following short notations
  $${\rm supp}_t(u,B):={\rm supp}_t u\cup {\rm supp}_t B,\quad \|(u,B)\|_{X}:=\|u\|_{X}+\|B\|_{X},$$
  where $X$ is Banach space.
  \item For the 3D vector $u$, the traceless part of $u\otimes u$ is
  $$u\mathring{\otimes}u=u\otimes u-\frac{1}{3}|u|^2{\rm Id}.$$
  \item The projection onto the functions with zero mean $\mathbb{P}_{\neq0}f=f-\fint_{\mathbb{T}^2}$, and the Helmholtz projection $\mathbb{P}_{H}f=f-\nabla(\Delta^{-1}{\rm div}f)$.
\end{enumerate}

%%%%%%%%%%%%%%%%%%%%%%%%%%%%%%%%%%%%%%%%%%%%%%%%%%%%%%%%%%%%%%%%%%%%%%%%%%%%%%%%%%%%%%%%%%%%%%%%%%%%%%%%%%%%%%%%%%%%%%%%%%%%%%%%%%%%%%%%%%%%%%%%%%%%%%%%%%%%%%%%%%%%%%%%%%%%%%%%%%%%%%

\subsection{Main result}\label{sec1.2}

To begin with, let us formulate precisely the definition of weak solutions to the Hall-MHD equations. Using the identities $(\nabla\times B)\times B={\rm div}\left( B\otimes B\right)$ and $\nabla\times (u\times B)={\rm div}(u\otimes B -B\otimes u)$, equation \eqref{Hall-mhd} can be written as:
\begin{equation}\label{Hall-MHD1}
\begin{cases}
\partial _tu+\nu_1(-\Delta)^{\alpha_1} u+{\rm div}(u\otimes u-B\otimes B)+\nabla p= 0,&{\rm div}u = 0, \\
\partial _tB+\nu_2(-\Delta)^{\alpha_2} B+{\rm div}(B\otimes u -u\otimes B)+\nabla\times{\rm div}\left( B\otimes B\right)=0,&{\rm div}B=0,\\
(u,B)|_{t=0}=(u_0,B_0),
\end{cases}
\end{equation}
where $p=P+\frac{1}{2}|B|^2$.
We focus on the following non-Leray--Hopf weak solution:
\begin{Definition}[Weak solution]\label{def1} Given any zero mean initial data $(u_0,B_0)\in L^2$, we say that $(u,B)\in C^0([0,T];L^2(\mathbb{T}^2))$ is a weak solution of the Cauchy problem for the Hall-MHD equations \eqref{Hall-MHD1} if $(u,B)(\cdot,t)$ is weakly divergence free for all $t\in[0,T]$, with zero mean, and
\begin{align*}
\int_{\mathbb{T}^2}u_0\varphi(\cdot,0)dx+\int_{0}^T\int_{\mathbb{T}^2}u\partial _t\varphi-\nu_1 u(-\Delta)^{\alpha_1} \varphi+(u\otimes u-B\otimes B):\nabla \varphi dxdt=0,
\end{align*}
\begin{align*}
\int_{\mathbb{T}^2}B_0\varphi(\cdot,0)dx+\int_{0}^T\int_{\mathbb{T}^2}&B\partial _t\varphi-\nu_2B(-\Delta)^{\alpha_2} \varphi+(u\otimes B-B\otimes u):\nabla \varphi \\
&+(B\otimes B):\nabla \nabla\times\varphi dxdt=0,
\end{align*}
for any $\varphi\in C_0^\infty(\mathbb{T}^2\times[0,T))$ such that $\varphi(\cdot,t)$ is divergence free for all $t$.
\end{Definition}

The main theorem of this paper is stated as follows, from which we can get the non-uniqueness of weak solutions of the Hall-MHD equation \eqref{Hall-MHD1}.
\begin{Theorem}\label{main result}
Let $\alpha_1\in(0, \frac{3}{4})$, $\alpha_2\in(0,\frac{5}{4})$, and let $(\tilde{u},\tilde{B})$ be any smooth divergence free and mean-value vector fields on $\mathbb{T}^2\times[0,T]$. There exists $\beta'\in (0,1)$ such that for any given $\varepsilon_*>0$, there exists a weak solution $(u,B)$ of system \eqref{Hall-MHD1} satisfying
\begin{align}\label{thm1.2-1}
(u,B)\in C^0([0,T];H^{\beta'}(\mathbb{T}^2)),
\end{align}
\begin{align}\label{thm1.2-2}
\|u-\tilde{u}\|_{C^0([0,T];L^{1}(\mathbb{T}^2))}\leq\varepsilon_*,\quad\|B-\tilde{B}\|_{C^0([0,T];L^{1}(\mathbb{T}^2))}\leq\varepsilon_*,
\end{align}
\begin{align}\label{thm1.2-3}
{\rm suppt}_t(u,B)\subseteq N_{\varepsilon_*}\left({\rm suppt}_t(\tilde{u},\tilde{B})\right),
\end{align}
and
\begin{align}\label{thm1.2-4}
|\mathcal{H}_{B,B}-\mathcal{H}_{\tilde{B},\tilde{B}}|\leq\varepsilon_*.
\end{align}
\end{Theorem}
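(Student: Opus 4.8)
The plan is to prove Theorem~\ref{main result} through an intermittent convex integration (Nash) iteration tailored to the $2\frac{1}{2}$D Hall-MHD system. First I would introduce the relaxed \emph{Hall-MHD-Reynolds system}, in which the right-hand sides of \eqref{Hall-MHD1} are replaced by the divergences of a symmetric traceless stress $\mathring{R}^u$ and an antisymmetric stress $\mathring{R}^B$:
\begin{align*}
&\partial_t u+\nu_1(-\Delta)^{\alpha_1}u+{\rm div}(u\otimes u-B\otimes B)+\nabla p={\rm div}\,\mathring{R}^u,\\
&\partial_t B+\nu_2(-\Delta)^{\alpha_2}B+{\rm div}(B\otimes u-u\otimes B)+\nabla\times{\rm div}(B\otimes B)={\rm div}\,\mathring{R}^B,
\end{align*}
together with ${\rm div}\,u={\rm div}\,B=0$. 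The heart of the argument is a single iteration proposition: for a suitable sequence of frequencies $\lambda_q=a^{b^q}$ and amplitudes $\delta_q=\lambda_q^{-2\beta}$, given a smooth solution $(u_q,B_q,\mathring{R}^u_q,\mathring{R}^B_q)$ of this system with $\|(\mathring{R}^u_q,\mathring{R}^B_q)\|_{L^1_{t,x}}\lesssim\delta_{q+1}$ and appropriate $C^1$ bounds on $(u_q,B_q)$, one produces a new solution $(u_{q+1},B_{q+1},\mathring{R}^u_{q+1},\mathring{R}^B_{q+1})$ whose stress obeys $\|(\mathring{R}^u_{q+1},\mathring{R}^B_{q+1})\|_{L^1_{t,x}}\lesssim\delta_{q+2}$ while the increments $\|(u_{q+1}-u_q,B_{q+1}-B_q)\|_{L^2_x}\lesssim\delta_{q+1}^{1/2}$ remain summable in $H^{\beta'}$.

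Next I would build the perturbation from the new intermittent building blocks announced in the abstract: I would take 3D intermittent flows concentrated on thin tubes and project them onto the plane to obtain $2\frac{1}{2}$D Mikado flows depending only on $(x_1,x_2)$ but carrying a nontrivial third component. Writing $u_{q+1}=u_q+w^u$ and $B_{q+1}=B_q+w^B$ with $w^u=w^u_{(p)}+w^u_{(c)}+w^u_{(t)}$ and likewise for $w^B$, the principal parts would be chosen so that their low-frequency interactions reproduce the old stresses, i.e.\ $\mathbb{P}_{\neq0}\big(w^u_{(p)}\mathring{\otimes}w^u_{(p)}-w^B_{(p)}\otimes w^B_{(p)}\big)\approx-\mathring{R}^u_q$ and the antisymmetric analogue $\mathbb{P}_{\neq0}\big(w^B_{(p)}\otimes w^u_{(p)}-w^u_{(p)}\otimes w^B_{(p)}\big)\approx-\mathring{R}^B_q$. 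The divergence-free constraints are restored by the correctors $w_{(c)}$, and the temporal correctors $w^u_{(t)},w^B_{(t)}$ absorb the slowly oscillating part of the self-interactions. The residual stress $(\mathring{R}^u_{q+1},\mathring{R}^B_{q+1})$ then splits into a linear (dissipative plus transport) error, an oscillation error, corrector errors, and a new \emph{Hall error} arising from $\nabla\times{\rm div}(B_{q+1}\otimes B_{q+1})$; each piece is recast through an inverse-divergence operator $\mathcal{R}$ (with an antisymmetric counterpart for the magnetic equation) and then estimated using the intermittency and oscillation gains of the building blocks.

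The principal difficulty, absent in the MHD case, is the Hall term. Because $\nabla\times{\rm div}(B\otimes B)$ carries one derivative more than the quadratic terms of \eqref{mhd}, the associated Hall error loses an extra factor of $\lambda_{q+1}$ when estimated, so it is the most singular contribution to $\mathring{R}^B_{q+1}$; moreover the strong intermittency this forces on $w^B$ feeds back into the momentum equation through the Lorentz interaction $B\otimes B$ and enlarges the oscillation error there. Beating both by the gain $\delta_{q+2}$ requires a careful balance against the fractional dissipation $(-\Delta)^{\alpha_i}$, and this balance is exactly what produces the admissible ranges $\alpha_1\in(0,\tfrac34)$ and $\alpha_2\in(0,\tfrac54)$. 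I expect the verification that the Hall error, the dissipative error, and the oscillation error can be dominated simultaneously by $\delta_{q+2}$---via an appropriate choice of the intermittency exponents together with $b$ and $\beta$---to be the crux of the proof.

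Finally I would run the iteration starting from data constructed out of $(\tilde u,\tilde B)$. The summability of $\sum_q\|(w^u,w^B)\|_{C^0_tH^{\beta'}_x}$ yields convergence of $(u_q,B_q)$ to a limit $(u,B)\in C^0_tH^{\beta'}_x$, while $(\mathring{R}^u_q,\mathring{R}^B_q)\to0$ in $L^1_{t,x}$ identifies $(u,B)$ as a weak solution in the sense of Definition~\ref{def1}, giving \eqref{thm1.2-1}. Estimate \eqref{thm1.2-2} follows by controlling the first increment in $C^0_tL^1_x$ and summing the later ones; inserting temporal cutoffs that keep each perturbation supported inside $N_{\varepsilon_*}\big({\rm supp}_t(\tilde u,\tilde B)\big)$ gives \eqref{thm1.2-3}; and \eqref{thm1.2-4} follows by tracking the helicity increment $\mathcal{H}_{B_{q+1},B_{q+1}}-\mathcal{H}_{B_q,B_q}$ at each stage, which is rendered summably small by the design of $w^B_{(p)}$, so that the total deviation from $\mathcal{H}_{\tilde B,\tilde B}$ stays below $\varepsilon_*$.
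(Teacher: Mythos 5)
Your overall convex-integration skeleton (iteration proposition, mollification, intermittent building blocks, error decomposition, summation in $H^{\beta'}$, temporal cutoffs for the support statement, and helicity tracking) matches the paper, but the relaxed system and the cancellation ansatz you chose for the magnetic equation contain a genuine gap. You posit an \emph{antisymmetric} magnetic stress entering through a plain divergence, ${\rm div}\,\mathring{R}^B$, to be cancelled by the cross-interactions $\mathbb{P}_{\neq0}\bigl(w^B_{(p)}\otimes w^u_{(p)}-w^u_{(p)}\otimes w^B_{(p)}\bigr)$, i.e.\ the Beekie--Buckmaster--Vicol MHD mechanism. This fails here on two counts. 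First, in the $2\frac12$D geometry this antisymmetric-stress reduction is precisely what is known \emph{not} to work (the paper's introduction recalls this obstruction for 2D ideal MHD, which forced Miao--Nie--Ye to split the velocity field instead), and with planar-dependent building blocks you do not have the correlated velocity/magnetic oscillations needed to span a $3\times3$ antisymmetric matrix. Second, and more fundamentally, your ansatz leaves the Hall self-interaction $\nabla\times{\rm div}\bigl(w^B_{(p)}\otimes w^B_{(p)}\bigr)$ with nowhere to go: its low-frequency part $\nabla\times{\rm div}\bigl(\sum_k a_{(k)}^2\,k\otimes k\bigr)$ is a large, slowly varying term with no cancellation partner in your scheme, and absorbing it into ${\rm div}\,\mathring{R}^B$ through $\mathcal{R}\,\nabla\times{\rm div}$ costs one net derivative at the fast frequency, which destroys the required smallness.

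The paper's resolution, which your proposal misses, is to make the Hall term the cancellation mechanism rather than an error: the magnetic stress is a \emph{symmetric traceless} matrix entering as $\nabla\times{\rm div}\,\mathring{R}^B_q$, matched against the Hall nonlinearity, and it is cancelled by the magnetic self-interaction alone, $d^{(p)}_{q+1}\otimes d^{(p)}_{q+1}+\mathring{R}^B_l=\theta_B^2\rho_B\,{\rm Id}+\text{(high modes)}$, using a second direction family $\Lambda_2$ in the geometric lemma; the pure-trace part is annihilated because $\nabla\times\nabla(\cdot)=0$. The induction terms $B\otimes u-u\otimes B$ are then demoted to harmless errors via $\mathcal{R}\,{\rm curl}^{-1}{\rm div}$, and the leftover $-d^{(p)}_{q+1}\otimes d^{(p)}_{q+1}$ in the momentum equation is compensated by subtracting $G^B=\sum_{k\in\Lambda_2}a_{(k)}^2\fint_{\mathbb{T}^2} W_{(k)}\otimes W_{(k)}\,dx$ inside the velocity amplitude, $\rho_u=\chi(\mathring{R}^u_l-G^B)$. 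Note also that your heuristic that the Hall error is ``the most singular contribution'' is backwards in this formulation: the extra curl is inverted, so every magnetic error gains an extra inverse derivative, which is exactly why $\alpha_2$ ranges up to $\frac54$ while $\alpha_1$ only up to $\frac34$ (compare $\mathcal{R}\,{\rm curl}^{-1}(-\Delta)^{\alpha_2}d_{q+1}\sim|\nabla|^{2\alpha_2-2}d_{q+1}$ with $\mathcal{R}(-\Delta)^{\alpha_1}w_{q+1}\sim|\nabla|^{2\alpha_1-1}w_{q+1}$). A minor further discrepancy: the paper's building blocks are stationary Mikado flows satisfying ${\rm div}(W_{(k)}\otimes W_{(k)})=0$, so the perturbations consist only of principal and incompressibility parts and no temporal correctors $w_{(t)}$ are needed.
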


Now, we give the statement of the non-uniqueness result.
\begin{Theorem}[Main Theorem]\label{main theorem}
For $\alpha_1\in(0, \frac{3}{4})$, $\alpha_2\in(0, \frac{5}{4})$, there exist infinitely many weak solutions of \eqref{Hall-MHD1} with the same data at zero, which live in $H^{\beta'}$ for some $\beta'\in (0,1)$ and do not conserve the magnetic helicity.
\end{Theorem}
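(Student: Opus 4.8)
The plan is to deduce Theorem~\ref{main theorem} directly from the approximation result Theorem~\ref{main result}; the entire analytic difficulty lies in the latter (the convex integration scheme), while the passage to non-uniqueness and to non-conservation of magnetic helicity is a soft argument. The first step is to fix the data at $t=0$. I would choose a single smooth, divergence-free, mean-zero vector field $(\tilde u,\tilde B)$ of the form \eqref{1.2} that vanishes identically on a time interval $[0,\tau_0]$ with $\tau_0>0$, and which is \emph{not} itself a weak solution of \eqref{Hall-MHD1} (a generic such field is not). Because the support control \eqref{thm1.2-3} gives $\mathrm{supp}_t(u,B)\subseteq N_{\varepsilon_*}(\mathrm{supp}_t(\tilde u,\tilde B))$, any solution produced by Theorem~\ref{main result} with $\varepsilon_*<\tau_0/2$ vanishes on $[0,\tau_0/2]$ and hence satisfies $(u,B)|_{t=0}=(0,0)$. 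Thus every solution obtained below shares the same (zero) initial data.

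For the non-conservation of helicity I would first ensure the target has time-varying helicity. In the $2\tfrac12$D setting one computes from \eqref{helicity}, writing the planar part as $(\tilde B_1,\tilde B_2)=\nabla^{\perp}\psi$, that $\mathcal H_{\tilde B,\tilde B}(t)=2\int_{\mathbb{T}^2}\psi\,\tilde B_3\,dx$; choosing $\psi$ and $\tilde B_3$ correlated at a time $t_*\in(\tau_0,T]$ (e.g.\ $\psi=\cos x_1$, $\tilde B_3=\cos x_1$, smoothly switched on after $\tau_0$) gives $\mathcal H_{\tilde B,\tilde B}(0)=0$ and $\mathcal H_{\tilde B,\tilde B}(t_*)=h_0\neq0$. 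I would then apply Theorem~\ref{main result} along a sequence $\varepsilon_k\downarrow0$ (each $\varepsilon_k<\tau_0/2$) to this fixed target; writing $(u^{(k)},B^{(k)})$ for the corresponding solutions, the helicity bound \eqref{thm1.2-4} evaluated at $t=0$ and $t=t_*$ gives $|\mathcal H_{B^{(k)},B^{(k)}}(0)|\le\varepsilon_k$ and $|\mathcal H_{B^{(k)},B^{(k)}}(t_*)-h_0|\le\varepsilon_k$. Hence for all $k$ with $\varepsilon_k<|h_0|/3$ one has $\mathcal H_{B^{(k)},B^{(k)}}(0)\neq\mathcal H_{B^{(k)},B^{(k)}}(t_*)$, so these solutions do not conserve magnetic helicity.

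It remains to produce infinitely many distinct such solutions. The $L^1$-closeness \eqref{thm1.2-2} gives $\|(u^{(k)},B^{(k)})-(\tilde u,\tilde B)\|_{C^0_tL^1}\le\varepsilon_k\to0$, so $(u^{(k)},B^{(k)})\to(\tilde u,\tilde B)$ in $C^0_tL^1$. If the family $\{(u^{(k)},B^{(k)})\}_k$ took only finitely many values, then some value would be attained along a subsequence converging to $(\tilde u,\tilde B)$, forcing $(\tilde u,\tilde B)$ to coincide with one of the $(u^{(k)},B^{(k)})$ and hence to be a weak solution of \eqref{Hall-MHD1}; but $(\tilde u,\tilde B)$ was chosen not to be a solution, a contradiction. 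Therefore the family contains infinitely many distinct weak solutions, all with zero initial data, all lying in $C^0([0,T];H^{\beta'})$ by \eqref{thm1.2-1}, and all (for $k$ large) failing to conserve the magnetic helicity, which proves Theorem~\ref{main theorem}.

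I expect no genuine obstacle in this deduction itself; the only points requiring care are (i) the use of \eqref{thm1.2-4} simultaneously at the two times $0$ and $t_*$, which is legitimate because the construction controls the helicity gap uniformly in $t$ and $\mathcal H_{B,B}$ is continuous in time for $B\in C^0_tL^2$, and (ii) exhibiting a smooth non-solution target with nonzero $2\tfrac12$D helicity, handled by the explicit formula above. The real work, and the true obstacle, is Theorem~\ref{main result}: building the intermittent $2\tfrac12$D Mikado flows, closing the Reynolds and magnetic stress estimates for the Hall term in the supercritical ranges $\alpha_1\in(0,\tfrac34)$, $\alpha_2\in(0,\tfrac54)$, and simultaneously controlling the helicity increment \eqref{thm1.2-4} along the iteration, all of which are assumed here.
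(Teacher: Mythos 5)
Your deduction is correct, and it reaches the theorem by a genuinely different route for the ``infinitely many'' part than the paper does. The paper (Section 6.2) applies Theorem~\ref{main result} with one fixed $\varepsilon_*=\tfrac{1}{100}$ to a one-parameter family of targets of growing amplitude, $\tilde u_m=m\psi(t)(\sin x_2,0,0)^T$ and $\tilde B_m=m\psi(t)(\sin x_2,\cos x_1,-\sin x_1-\cos x_2)^T$, and separates the resulting solutions \emph{pairwise and quantitatively} by their $L^1_x$ norms on $t\in[\tfrac12,\tfrac58]$, via $\bigl|\|u_m\|_{L^1_x}-\|u_{m'}\|_{L^1_x}\bigr|\geq 8\pi(m-m')-2\varepsilon_*>0$; zero initial data comes, exactly as in your argument, from the temporal support control \eqref{thm1.2-3} applied to a cutoff supported in $[\tfrac14,\tfrac34]$, and helicity non-conservation from the fact that $\tilde B_m$ is a curl eigenfunction ($\tilde A_m=\tilde B_m$, so $\mathcal{H}_{\tilde B_m,\tilde B_m}=8\pi^2m^2\psi(t)^2$) combined with \eqref{thm1.2-4} at a later time --- the same structure you reinvent with $\psi=\cos x_1$, $\tilde B_3=\cos x_1$, and your planar helicity identity $\mathcal{H}_{B,B}=2\int_{\mathbb{T}^2}\psi B_3\,dx$ is correct. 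You instead fix a single target and send $\varepsilon_k\downarrow 0$, extracting infinitude by a soft compactness contradiction from the $C^0_tL^1$ convergence \eqref{thm1.2-2}; this is valid, buys you a cleaner statement with no explicit family, but costs you the hypothesis that the target is \emph{not} itself a weak solution, which you dismiss with ``a generic such field is not.'' That claim should be verified for your concrete choice rather than asserted; fortunately it is immediate there: since $\tilde B=\theta(t)(0,\sin x_1,\cos x_1)^T$ satisfies $\nabla\times\tilde B=\tilde B$, the Hall term $(\nabla\times\tilde B)\times\tilde B$ vanishes and $(-\Delta)^{\alpha_2}\tilde B=\tilde B$, so with $\tilde u=0$ the induction equation forces $\theta'+\nu_2\theta=0$, incompatible with $\theta\equiv 0$ near $t=0$ and $\theta(t_*)=1$. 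With that one verification supplied, your proof is complete; the paper's version avoids this hypothesis altogether and in addition exhibits an explicit pairwise-distinct family, at the price of re-running the approximation theorem over infinitely many targets of unbounded size.
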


\begin{Remark}
For the 2D NS system \eqref{ns} with $\alpha_1\in [0,1)$, the $C^0_tL^2_x$ weak solutions are not unique (see \cite{lq}). As we mentioned above, the non-uniqueness of weak solutions to the $2\frac{1}{2}$D NS systems can be deduced from that of the 2D NS equations. When the magnetic field is zero, the Hall-MHD systems becomes the NS equations; then the non-unique result of the NS system immediately gives the same result of the Hall-MHD system. However, the weak solutions constructed in Theorem \ref{main theorem} can not be obtained by such way, since the magnetic helicity of weak solutions $(u,B)=(u,0)\in C^0_tL^2_x$ is identically zero.
\end{Remark}

Our next result is the vanishing viscosity and resistive limits of weak solutions to the $2\frac{1}{2}$D Hall-MHD equations.
\begin{Theorem}\label{thm1.4}
Let $\alpha_1\in(0, \frac{3}{4})$, $\alpha_2\in(0, \frac{5}{4})$ and $(u,B)\in C^{\bar{\beta}}_{t,x}\times C^{\bar{\beta}}_{t,x}$ be any mean-free solution to the ideal Hall-MHD equations, where $\bar{\beta}>0$. Then there exists $\beta'\in (0,\bar{\beta})$ and a sequence of weak solutions $(u^{\nu_{1,n}},B^{\nu_{2,n}})\in C^0_t H^{\beta'}_x\times C^0_t H^{\beta'}_x$ to the Hall-MHD system \eqref{Hall-MHD1} such that
$$u^{\nu_{1,n}}\rightarrow u,\quad B^{\nu_{2,n}}\rightarrow B \text{ strongly in}\;\; C^0_t L^2_x \text{ as } \nu_n:=(\nu_{1,n}, \nu_{2,n})\to 0,$$
where $\nu_{1,n}$, $\nu_{2,n}$ are the viscosity and resistivity coefficients, respectively.
\end{Theorem}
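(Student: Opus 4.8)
The plan is to realize the prescribed ideal solution as the level-zero object of the convex integration scheme developed for Theorem \ref{main result}, with the dissipative terms playing the role of the Reynolds and magnetic stress errors, and then to drive those errors to zero as $\nu_n\to0$. First I would observe that, since $(u,B)$ solves the ideal Hall-MHD system exactly, substituting it into the dissipative system \eqref{Hall-MHD1} leaves precisely the residuals $\nu_{1,n}(-\Delta)^{\alpha_1}u$ and $\nu_{2,n}(-\Delta)^{\alpha_2}B$ on the two right-hand sides. As $u,B$ are mean-free, these residuals are mean-free and can be written in divergence form through the inverse-divergence operator $\mathcal R$, producing stresses $\mathring R^u=\nu_{1,n}\mathcal R(-\Delta)^{\alpha_1}u$ and $\mathring R^B=\nu_{2,n}\mathcal R(-\Delta)^{\alpha_2}B$ of size $O(\nu_n)$; thus $(u,B)$ is a subsolution of the relaxed (Hall-MHD--Reynolds) system.

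Because $(-\Delta)^{\alpha_i}$ of a merely $C^{\bar\beta}$ field is a distribution, the next step is to mollify $(u,B)$ in space at a scale $\ell_n$, obtaining smooth $(u_{\ell_n},B_{\ell_n})$; using the ideal equation to rewrite $\partial_t(\cdot)_{\ell_n}$, one checks that $(u_{\ell_n},B_{\ell_n})$ solves the relaxed dissipative system with stress $\mathring R_n=(\mathring R_n^u,\mathring R_n^B)$ made of the regularized dissipative residuals $\nu_{i,n}\mathcal R(-\Delta)^{\alpha_i}(\cdot)_{\ell_n}$ together with Constantin--E--Titi mollification commutators of the quadratic and Hall terms. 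Invoking $(u,B)\in C^{\bar\beta}_{t,x}$, the transport and Lorentz commutators are $\lesssim \ell_n^{2\bar\beta}$, the dissipative residuals are $\lesssim \nu_{i,n}\ell_n^{\bar\beta-2\alpha_i}$, and the Hall commutator, carrying one extra derivative, is the delicate one. I would then fix $\ell_n\to0$ and choose $\nu_{i,n}\to0$ fast enough (for instance so that $\nu_{i,n}\ell_n^{\bar\beta-2\alpha_i}\to0$) to guarantee $\|\mathring R_n\|\to0$.

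With a small starting stress in hand, I would feed $(u_{\ell_n},B_{\ell_n},\mathring R_n)$ into the inductive proposition of the convex integration scheme, launching the iteration at a level $q_n$ with $\delta_{q_n}\gtrsim\|\mathring R_n\|$; since $\|\mathring R_n\|\to0$ we have $q_n\to\infty$. Iterating yields an exact weak solution $(u^{\nu_n},B^{\nu_n})\in C^0_tH^{\beta'}_x$ of \eqref{Hall-MHD1} whose distance from the base is controlled by the tail of the geometric series, $\|u^{\nu_n}-u_{\ell_n}\|_{C^0_tL^2_x}+\|B^{\nu_n}-B_{\ell_n}\|_{C^0_tL^2_x}\lesssim\sum_{q\ge q_n}\delta_q^{1/2}\to0$. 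Combining this with the mollification estimate $\|u_{\ell_n}-u\|_{C^0_tL^2_x}+\|B_{\ell_n}-B\|_{C^0_tL^2_x}\lesssim\ell_n^{\bar\beta}\to0$ and the triangle inequality gives the strong $C^0_tL^2_x$ convergence asserted in Theorem \ref{thm1.4}; choosing $\beta'<\bar\beta$ keeps the regularity produced by the scheme consistent with the target.

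The hard part will be the simultaneous control of two competing effects, a difficulty with no counterpart in the pure MHD ideal limit. On one side, the dissipative residual $\nu_{i,n}\mathcal R(-\Delta)^{\alpha_i}(\cdot)_{\ell_n}$ blows up like $\ell_n^{\bar\beta-2\alpha_i}$ as the mollification scale shrinks, so $\nu_n$ must be sent to zero fast relative to $\ell_n$; on the other side, the Hall commutator loses one derivative and is only $\lesssim\ell_n^{2\bar\beta-1}$, which is not manifestly small for low regularity. Reconciling these forces $(\nu_n,\ell_n)$ to be threaded through the narrow window in which both quantities vanish while the iteration still converges and delivers $H^{\beta'}$ regularity; this is exactly where I expect to exploit the inverse-divergence/antisymmetric form of the magnetic stress and the $2\tfrac12$D structure of the Hall term to absorb the extra derivative, rather than bounding the commutator crudely in $C^0$.
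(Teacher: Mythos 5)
Your proposal follows essentially the same route as the paper's proof of Theorem \ref{thm1.4}: mollify the ideal solution, absorb the dissipative terms into the relaxed stresses via $\mathcal{R}$ (and $\mathcal{R}{\rm curl}^{-1}$ for the magnetic equation), verify the inductive estimates \eqref{1.12}--\eqref{1.14} at a high starting level $q=n$, run the iteration of Theorem \ref{main iteration}, and conclude by the triangle inequality from the tail of the series together with the mollification error; the paper even gets convergence in the stronger norm $C^0_tH^{\beta'}_x$ by the interpolation argument of \eqref{6.4}, bounding the tail by $\frac{1}{2n}$ as in \eqref{6.21}--\eqref{6.22}. One genuine (though easily repaired) gap: you mollify in space only and propose to recover time derivatives from the equation, but the scheme's inductive hypotheses \eqref{1.12}--\eqref{1.13} demand $C^1_{t,x}$ bounds on \emph{both} the fields and the stresses, and the Constantin--E--Titi commutators such as $u_n\mathring{\otimes}u_n-(u\mathring{\otimes}u)\ast\phi_{\ell_n}$ are then only $C^{\bar{\beta}}$ in time, so \eqref{1.13} fails; the paper mollifies in space \emph{and} time, cf.\ \eqref{6.11}, precisely for this reason.

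The ``hard part'' you flag at the end is in fact a phantom, and recognizing this simplifies the argument considerably. In the relaxed system \eqref{Hall-MHDn} the magnetic error enters under the operator $\nabla\times{\rm div}$, so the Hall commutator that must be estimated is merely $B_n\mathring{\otimes}B_n-(B\mathring{\otimes}B)\ast\phi_{\lambda_n^{-1}}\ast\varphi_{\lambda_n^{-1}}$ with \emph{no} extra derivative --- the derivative stays in the operator outside the stress, cf.\ \eqref{6.14} --- and this is $O(\lambda_n^{-2\bar{\beta}})$ by the standard commutator estimate; your anticipated bound $\ell_n^{2\bar{\beta}-1}$ and the ``delicate absorption'' never arise. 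Likewise there is no narrow window for $(\nu_n,\ell_n)$: the paper simply ties the coefficients to the mollification scale, $\nu_{1,n}=\lambda_n^{-2\alpha_1}$, $\nu_{2,n}=\lambda_n^{-2\alpha_2}$, whence $\nu_{1,n}\|\mathcal{R}(-\Delta)^{\alpha_1}u_n\|_{L^2}\lesssim\lambda_n^{-1}\|u\|_{L^2}$ and $\nu_{2,n}\|\mathcal{R}{\rm curl}^{-1}(-\Delta)^{\alpha_2}B_n\|_{L^2}\lesssim\lambda_n^{-2}\|B\|_{L^2}$, uniformly in $\bar{\beta}$, since $\mathcal{R}$ gains one derivative and $\mathcal{R}{\rm curl}^{-1}$ gains two. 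Your freer choice of $\nu_{i,n}$ decaying ``fast enough'' is also admissible (the theorem only asserts existence of a sequence), but your exponent $\nu_{i,n}\ell_n^{\bar{\beta}-2\alpha_i}$ is pessimistic for the same reason: only $\|u\|_{L^2}$ is needed, giving $\nu_{i,n}\ell_n^{1-2\alpha_i}$ (resp.\ $\ell_n^{2-2\alpha_2}$ for the magnetic part), so no threading of parameters is required.
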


\subsection{Outline of the proof}
Without loss of generality, we take $T=1$ in the rest of the paper. The construction of the non-Leray--Hopf weak solution is based on the intermittent convex integration scheme, inspired by \cite{bv}. More precisely, for $q\in \mathbb{N}$, we focus on the following approximating system: {\small
\begin{equation}\label{Hall-MHDapp}
\begin{cases}
\partial _tu_q+\nu_1(-\Delta)^{\alpha_1} u_q+{\rm div}(u_q\otimes u_q-B_q\otimes B_q)+\nabla p_q={\rm div}\mathring{R}_q^u, \\
\partial _tB_q+\nu_2(-\Delta)^{\alpha_2} B_q+{\rm div}(B_q\otimes u_q-u_q\otimes B_q)+\nabla\times{\rm div}\left( B_q\otimes B_q\right)=\nabla\times{\rm div}\mathring{R}_q^B,\\
{\rm div}u_q ={\rm div}B_q=0,
\end{cases}
\end{equation}}
\!\!where the Reynolds stress $\mathring{R}_q^u$ and the magnetic stress $\mathring{R}_q^B$ are symmetric traceless $3\times3$ matrices. For $q\in \mathbb{N}$, define $\lambda_q$ and $\delta_{q+2}$ as follows:
\begin{align}{\label{1.9}}
\lambda_q=a^{b^q},\quad \delta_{q+2}=\lambda_{q+2}^{-2\beta}.
\end{align}
Here $a\in 5\mathbb{N}$ is a large integer such that $a^\varepsilon\in 5\mathbb{N}$, $b\in 2\mathbb{N}$ satisfying
\begin{align}{\label{1.10}}
b>\frac{1000}{\varepsilon},\quad 0<\beta<\frac{1}{100b^2},
\end{align}
where $\varepsilon$ is a small parameter with
\begin{align}{\label{1.11}}
0<\varepsilon\leq\frac{1}{4}{\rm min}\left\{\frac{1}{2},\frac{3}{4}-\alpha_1,\frac{5}{4}-\alpha_2\right\}.
\end{align}
Let $q\in \mathbb{N}$, we suppose the following inductive estimates of system \eqref{Hall-MHDapp} hold at level $q$:
\begin{align}{\label{1.12}}
\left\|(u_q,B_q)\right\|_{C^1_{t,x}}\leq \lambda_q^4,
\end{align}
\begin{align}{\label{1.13}}
\left\|(\mathring{R}_q^u,\mathring{R}_q^B)\right\|_{C^1_{t,x}}\leq \lambda_q^8,
\end{align}
\begin{align}{\label{1.14}}
\left\|(\mathring{R}_q^u,\mathring{R}_q^B)\right\|_{L^1_{x}}\leq \delta_{q+1}.
\end{align}
The weak solutions are obtained by a sequence of approximating solutions, which will be constructed by the following iteration theorem.
\begin{Theorem}[Main Iteration]\label{main iteration}
Let $\alpha_1\in(0, \frac{3}{4})$, $\alpha_2\in(0, \frac{5}{4})$. There exist $\beta\in (0,1)$, $M>0$ large enough and $a_0=a_0(\beta,M)$ such that for any integer $a\geq a_0$, the following holds:

Suppose that $\left(u_q,B_q,\mathring{R}_q^u,\mathring{R}_q^B\right)$ solves \eqref{Hall-MHDapp}and satisfies \eqref{1.12}-\eqref{1.14}. Then there exists a new pair $(u_{q+1},B_{q+1},\mathring{R}_{q+1}^u,\mathring{R}_{q+1}^B)$ solves \eqref{Hall-MHDapp} and satisfies \eqref{1.12}-\eqref{1.14} with $q+1$ replacing $q$. In addition,
\begin{align}{\label{1.15}}
\left\|(u_{q+1}-u_q,B_{q+1}-B_q)\right\|_{L^2_x}\leq M\delta_{q+1}^{\frac{1}{2}},
\end{align}
\begin{align}{\label{1.16}}
\left\|(u_{q+1}-u_q,B_{q+1}-B_q)\right\|_{L^{1}_x}\leq \delta_{q+2}^{\frac{1}{2}},
\end{align}
\begin{align}{\label{1.17}}
{\rm supp}_t\left(u_{q+1},B_{q+1},\mathring{R}_{q+1}^u,\mathring{R}_{q+1}^B\right)\subseteq N_{\delta_{q+2}^{\frac{1}{2}}}\left({\rm supp}_t\left(u_q,B_q,\mathring{R}_q^u,\mathring{R}_q^B\right)\right).
\end{align}
\end{Theorem}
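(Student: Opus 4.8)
The plan is to prove the Main Iteration Theorem via a single step of the Nash-type intermittent convex integration scheme, following the structure of \cite{bv,bcv,bbv}. The target is to construct $(u_{q+1},B_{q+1})$ as perturbations $u_{q+1}=u_q+w_{q+1}^u$ and $B_{q+1}=B_q+w_{q+1}^B$, where the principal parts of the perturbations are built from suitably rescaled and oscillating intermittent building blocks. Because the planar geometry forces us to work with $2\tfrac12$D fields as in \eqref{1.2}, the first key step is to carefully define the intermittent flows: I would begin with 3D intermittent Mikado/jet flows and then, as the abstract advertises, project them onto the plane to obtain $2\tfrac12$D Mikado flows, making sure they remain divergence-free with respect to the planar divergence $\partial_{x_1}u_1+\partial_{x_2}u_2$ and that the Hall-specific operator $\nabla\times\mathrm{div}(B\otimes B)$ can still be cancelled to leading order. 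A mollification step on $(u_q,B_q,\mathring R_q^u,\mathring R_q^B)$ at a length scale $\ell$ intermediate between $\lambda_q^{-1}$ and $\lambda_{q+1}^{-1}$ would be inserted first to gain control of higher derivatives, exploiting \eqref{1.12}--\eqref{1.13}.

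The second step is the algebraic cancellation of the low-frequency parts of the two stresses. Using a geometric lemma for the Reynolds stress $\mathring R_q^u$ and an analogous lemma for the magnetic stress $\mathring R_q^B$, I would choose amplitude functions $a_{\xi}^u,a_{\xi}^B$ (depending on the mollified stresses) so that the quadratic self-interactions $w_{q+1}^u\mathring\otimes w_{q+1}^u-w_{q+1}^B\mathring\otimes w_{q+1}^B$ and $w_{q+1}^B\otimes w_{q+1}^u-w_{q+1}^u\otimes w_{q+1}^B$ produce, at leading order, exactly $-\mathring R_\ell^u$ and $-\mathring R_\ell^B$ respectively. At this stage the crucial size bookkeeping is to verify \eqref{1.15} and \eqref{1.16}: the $L^2_x$ estimate is driven by the normalization of the building blocks together with $\|\mathring R_q^u\|_{L^1_x}^{1/2}\lesssim\delta_{q+1}^{1/2}$ from \eqref{1.14}, while the $L^1_x$ smallness in \eqref{1.16} comes from the intermittency (small $L^1$ norm of high-frequency, spatially concentrated profiles). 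The temporal support condition \eqref{1.17} is handled by using temporal cutoffs supported in $N_{\delta_{q+2}^{1/2}}(\mathrm{supp}_t(\cdot))$ and choosing the mollification parameters compatibly, which is routine once the building blocks are localized in time.

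The third and technically heaviest step is to define the new stresses $\mathring R_{q+1}^u,\mathring R_{q+1}^B$ by inverting the divergence (and, for the magnetic equation, the operator $\nabla\times\mathrm{div}$) applied to the collection of error terms: the linear (transport and dissipative) errors $\nu_i(-\Delta)^{\alpha_i}w_{q+1}$, the Nash/oscillation error left after the geometric cancellation, the corrector errors from divergence-free and mean-free corrections, and the genuinely new Hall error generated by $\nabla\times\mathrm{div}(B_{q+1}\otimes B_{q+1})$. I would estimate each error in $C^1_{t,x}$ to recover \eqref{1.13} and, more delicately, in $L^1_x$ to recover \eqref{1.14} at level $q+1$, i.e.\ to beat $\delta_{q+2}$. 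The inverse-divergence and Calder\'on--Zygmund-type bounds, combined with the parameter inequalities \eqref{1.10}--\eqref{1.11} (in particular $\alpha_1<\tfrac34$ and $\alpha_2<\tfrac54$, which govern how much dissipation the intermittent blocks can absorb), are what close the induction; the gain $\delta_{q+2}\ll\delta_{q+1}$ follows from choosing the oscillation frequency $\lambda_{q+1}$ large and the intermittency dimension appropriately.

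The main obstacle I anticipate is controlling the Hall error $\nabla\times\mathrm{div}(B_{q+1}\otimes B_{q+1})$, which has no counterpart in the NSE or MHD arguments and which carries an \emph{extra} derivative through the curl. This term forces the magnetic building blocks to be more regular and more strongly oscillating than the velocity blocks, and it is precisely why the admissible range for $\alpha_2$ ($<\tfrac54$) and the derivative budget differ from the velocity component. Reconciling this extra curl-derivative with the $2\tfrac12$D projection — ensuring that the projected Mikado flows still generate an invertible amplitude system while keeping the new magnetic error below $\delta_{q+2}$ in $L^1_x$ — is the delicate heart of the proof; everything else (mollification, geometric cancellation, support propagation) is a careful but standard adaptation of \cite{bv,bcv,bbv}.
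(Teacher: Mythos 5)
Your overall skeleton (mollification at scale $l=\lambda_q^{-20}$, projected $2\frac12$D Mikado blocks, amplitudes from a geometric lemma, inversion by $\mathcal{R}$ and ${\rm curl}^{-1}$, parameter bookkeeping for \eqref{1.15}--\eqref{1.17}) matches the paper, and your velocity-side plan --- choosing amplitudes so that the combined low-frequency part of $w\mathring\otimes w-d\mathring\otimes d$ cancels $\mathring R^u_\ell$ --- is exactly the paper's $G^B$-compensation \eqref{4.13}--\eqref{4.17}. However, there is a genuine structural gap on the magnetic side: you assign the cancellation of $\mathring R^B_\ell$ to the exchange nonlinearity $w^B\otimes w^u-w^u\otimes w^B$ (the Beekie--Buckmaster--Vicol MHD route \cite{bbv}) and treat the Hall term only as an error to be estimated. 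In the approximating system \eqref{Hall-MHDapp} the magnetic stress is symmetric traceless and enters as $\nabla\times{\rm div}\mathring R^B_q$, precisely so as to match the derivative structure of the Hall nonlinearity, and the paper's cancellation \eqref{4.25} is carried by the \emph{symmetric} Hall self-interaction $d^{(p)}_{q+1}\otimes d^{(p)}_{q+1}$ under $\nabla\times{\rm div}$, via $\sum_{k\in\Lambda_2}a_{(k)}^2\fint W_{(k)}\otimes W_{(k)}\,dx=\theta_B^2\rho_B{\rm Id}-\mathring R^B_l$. Your route fails for two concrete reasons. First, the low-frequency part of $d^{(p)}_{q+1}\otimes d^{(p)}_{q+1}$ has $L^1_x$ size $\sim\delta_{q+1}$ and cannot be absorbed as an error: since ${\rm curl}^{-1}\nabla\times$ is the identity on divergence-free fields, $\mathcal{R}\,{\rm curl}^{-1}\nabla\times{\rm div}\left(d^{(p)}_{q+1}\otimes d^{(p)}_{q+1}\right)$ is essentially $d^{(p)}_{q+1}\otimes d^{(p)}_{q+1}$ itself, with no derivative gain, so without the algebraic cancellation against $\mathring R^B_l$ the inductive bound \eqref{1.14} at level $q+1$ is lost no matter how $\mu,\sigma,\lambda_{q+1}$ are tuned. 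Second, an exchange-term cancellation needs velocity and magnetic blocks at the \emph{same} frequency and location with nonvanishing antisymmetric mean interaction, which is incompatible with the design both you and the paper rely on to control cross terms: here $\Lambda_1\cap\Lambda_2=\emptyset$ and the products $W_{(k)}\otimes W_{(k')}$ with $k\in\Lambda_2$, $k'\in\Lambda_1$ are made \emph{small} (disjoint supports when $\tilde{A_k}=\tilde{A_{k'}}$, overlap of measure $\lesssim(\mu\sigma)^{-2}$ otherwise, Lemma \ref{lem3.2}) and routed through $\mathcal{R}\,{\rm curl}^{-1}{\rm div}$ in \eqref{5.5}; moreover an antisymmetric low-frequency output cannot equal the symmetric traceless $-\mathring R^B_l$ demanded by the stress format.

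A related inversion in your heuristics: the Hall term does not force the magnetic blocks to be more regular or more strongly oscillating than the velocity blocks --- the paper uses the \emph{same} stationary Mikado flows $W_{(k)}=\phi_{(k)}k$ for both fields. The asymmetric ranges ($\alpha_2<\frac54$ versus $\alpha_1<\frac34$) arise in the opposite direction from what you suggest: the operator $\mathcal{R}\,{\rm curl}^{-1}$ used to invert the magnetic stress gains one extra antiderivative, so the dissipative linear error is bounded by $\|d_{q+1}\|_{W^{2\alpha_2-2,p}}$ in \eqref{5.19}, versus $\|w_{q+1}\|_{W^{2\alpha_1-1,p}}$ in \eqref{5.20}, which is exactly why the magnetic exponent range is \emph{wider}, not narrower. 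Once the magnetic cancellation is moved to the Hall self-interaction and the velocity amplitudes are built from $\mathring R^u_l-G^B$, the remainder of your outline (temporal cutoffs with $l\ll\delta_{q+2}^{1/2}$ for \eqref{1.17}, $L^2$/$L^1$ bookkeeping from \eqref{1.14} and intermittency for \eqref{1.15}--\eqref{1.16}) aligns with the paper's argument.
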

The new pair $\left(u_{q+1},B_{q+1},\mathring{R}_{q+1}^u,\mathring{R}_{q+1}^B\right)$ will be constructed in Section \ref{sec2}--\ref{sec5}. And the main results listed in Section \ref{sec1.2} will be proved by Theorem \ref{main iteration} in Section \ref{sec6}.

\subsection{New ingredient of the proof}
In the frame of convex integration for $2\frac{1}{2}$D Hall-MHD equations, the construction of intermittent velocity and magnetic flows is different from the previous works. In other words, there is no such $2\frac{1}{2}$ dimensional intermittent flows in the existing literatures. Concerning the routine of convex integration in $\mathbb{R}^3$, $(k,A_k,k\times A_k)$ is an orthonormal bases of $\mathbb{R}^3$, and the intermittent flows have the following form:
$$W_{(k)}=\psi_{(k)}\phi_{(A_k,k\times A_k)}k,$$
where $\{\psi_{(k)}\phi_{(A_k,k\times A_k)}\}$ are spatial concentration functions with three oscillation directions $k$, $A_k$, $k\times A_k$, and the set of $k$, named $\Lambda\subseteq \mathbb{S}\cap \mathbb{Q}^3$, satisfying geometric lemma \ref{lem3.1}, see \cite{bcv,pw}, for instance. For the $2\frac{1}{2}$ dimensional flows, based on the construction of $\Lambda$ and $(k,A_k,k\times A_k)$, we define the orthonormal bases $(\tilde{k},\tilde{A_k})$ of $\mathbb{R}^2$ by a projection onto the plane, such that $k\cdot(\tilde{A_k},0)=0$, see Appendix \ref{sec7} for more details. If we choose the intermittent flow $W_{(k)}$ by a similar way in \cite{bcv}, that is,
$$W_{(k)}=\psi_{(\tilde{k})}\phi_{(\tilde{A_k})}k,\quad k\in \Lambda,$$
which has two oscillation directions $\tilde{k}$, $\tilde{A_k}$, we find that $(\tilde{k},0)\times k$ may not disappear while $k\times k=0$ in $\mathbb{R}^3$. Then it is hard to construct the divergence-free corrector of $W_{(k)}$. Thus we consider the Mikado flows $W_{(k)}=\phi_{(\tilde{A_k})}k$, which satisfies ${\rm div}W_{(k)}=0$.

Another key point is the estimation of the interaction oscillations, that is $\mathring{R}_{osc,far}^u$ and $\mathring{R}_{osc,far}^B$ in Section \ref{sec5.2}. For every $k\in \Lambda_1\cup\Lambda_2$, the support of $W_{(k)}$ is contained in a thin rectangle with width $\sim(\mu\sigma)^{-1}$, where $\mu$ and $\sigma$ are large parameters defined in \eqref{para}, while the length equals to the side length of the period of $W_{(k)}$, which implies that the shifts such that $\{W_{(k)}\}$ have disjoint supports may not exist. On the other hand, we have no idea if the intersections between different supports of $W_{(k)}$ and $W_{(k')}$ contained in much smaller rectangles like the Mikado flows in \cite{lzz}, since $\tilde{A_k}$ and $\tilde{A_{k'}}$ may coincide ($\tilde{k}=\tilde{k'}$ at the same time) even if $k\neq k'$, see Table \ref{tab3} in Appendix \ref{sec7}. Thus the intersection oscillation can not be handled by the same way of 2D or 3D Mikado flows in \cite[Section 4.1]{cl} and \cite[Lemma 3.4]{lzz}. To overcome the obstacle, we modify the method in \cite[Lemma 3.4]{lzz} to estimate the intersection oscillations. For the case of $k\neq k'$ and $\tilde{A_k}\neq\tilde{A_{k'}}$, the intersections of the supports of $W_{(k)}$ and $W_{(k')}$ must be contained in much more smaller rectangles with length and width $\sim(\mu\sigma)^{-1}$, see Figure \ref{fig1}. In this setting, the intersection oscillations can be handled by a similar way in \cite[Lemma 3.4]{lzz}. While for the case of $k\neq k'$ but $\tilde{A_k}=\tilde{A_{k'}}$, the supports of $W_{(k)}$ and $W_{(k')}$ are contained in parallel thin rectangles with width $\sim(\mu\sigma)^{-1}$, thus we can find shifts to make sure that such $W_{(k)}$ and $W_{(k')}$ have disjoint supports.

%%%%%%%%%%%%%%%%%%%%%%%%%%%%%%%%%%%%%%%%%%%%%%%%%%%%%%%%%%%%%%%%%%%%%%%%%%%%%%%%%%%%%%%%%%%%%%%%%%%%%%%%%%%%%%%%%%%%%%%%%%%%%%%%%%%%%%%%%%%%%%%%%%%%%%%%%%%%%%%%%%%%%%%%%%%%%%%%%%%%%%

\section{Mollification}\label{sec2}
In order to avoid the loss of derivatives, we mollify the velocity and magnetic fields. Let $\phi_\epsilon$, $\varphi_\epsilon$ be the standard  2D and 1D Friedrichs mollifiers, and ${\rm supp}\varphi_\epsilon\subseteq (-\epsilon,\epsilon)$, for $\epsilon>0$. The mollifications of $\left(u_q,B_q,\mathring{R}_q^u,\mathring{R}_q^B\right)$ in space and time are given by
\begin{align}\label{2.1}
&u_l:=(u_q\ast_x\phi_l)\ast_t\varphi_l,\quad B_l:=(B_q\ast_x\phi_l)\ast_t\varphi_l,\notag \\
&\mathring{R}_l^u:=(\mathring{R}_q^u\ast_x\phi_l)\ast_t\varphi_l,\quad \mathring{R}_l^B:=(\mathring{R}_q^B\ast_x\phi_l)\ast_t\varphi_l,
\end{align}
where $l=\lambda_q^{-20}$. Then in view of system \eqref{Hall-MHDapp}, $\left(u_l,B_l,\mathring{R}_l^u,\mathring{R}_l^B\right)$ satisfies
\begin{equation}\label{Hall-MHDmollify}
\begin{cases}
\partial _tu_l+\nu_1(-\Delta)^{\alpha_1} u_l+{\rm div}(u_l\mathring{\otimes} u_l-B_l\mathring{\otimes} B_l)+\nabla p_l={\rm div}(\mathring{R}_l^u+\mathring{R}^u_{com}), \\
\partial _tB_l+\nu_2(-\Delta)^{\alpha_2} B_l+{\rm div}(B_l\otimes u_l-u_l\otimes B_l)+\nabla\times{\rm div}\left( B_l\mathring{\otimes}B_l\right)\\
\quad\quad\quad\quad\quad\quad\quad\quad\quad=\nabla\times{\rm div}(\mathring{R}_l^B+\mathring{R}^B_{com,1})+{\rm div}\mathring{R}^B_{com,2},\\
{\rm div}u_q ={\rm div}B_q=0,
\end{cases}
\end{equation}
where the traceless symmetric commutator stresses $\mathring{R}^u_{com}$ and $\mathring{R}^B_{com,1}$; the skew-symmetric commutator stress $\mathring{R}^B_{com,2}$ are given by
\begin{align}
&\mathring{R}^u_{com}:=u_l\mathring{\otimes} u_l-B_l\mathring{\otimes} B_l-(u_q\mathring{\otimes} u_q-B_q\mathring{\otimes} B_q)\ast\phi_l\ast\varphi_l,\label{2.3}\\
&\mathring{R}^B_{com,1}:=B_l\mathring{\otimes}B_l-(B_q\mathring{\otimes}B_q)\ast\phi_l\ast\varphi_l,\label{2.4}\\
&\mathring{R}^B_{com,2}:=B_l\otimes u_l-u_l\otimes B_l-(B_q\otimes u_q-u_q\otimes B_q)\ast\phi_l\ast\varphi_l,\label{2.5}
\end{align}
and the pressure $p_l$ is given by
\begin{align}\label{2.6}
p_l:=p_q\ast\phi_l\ast\varphi_l+\frac{1}{3}(|u_q|^2-|B_q|^2)\ast\phi_l\ast\varphi_l.
\end{align}
By standard millification estimates and inductive estimates \eqref{1.12}-\eqref{1.14}, for any $N\in \mathbb{N}_+$, and $1<p<\infty$, one has
\begin{align}
&\|(u_l,B_l)\|_{C^N_{t,x}}\lesssim l^{-N+1}\|(u_q,B_q)\|_{C^1_{t,x}}\lesssim l^{-N+1} \lambda_q^4\lesssim l^{-N},\label{2.7}\\
&\|(\mathring{R}^u_l,\mathring{R}^B_l)\|_{C^N_{t,x}}\lesssim l^{-N+1}\|(\mathring{R}^u_l,\mathring{R}^B_l)\|_{C^1_{t,x}}\lesssim l^{-N+1} \lambda_q^8\lesssim l^{-N},\label{2.8}\\
&\|(\mathring{R}^u_l,\mathring{R}^B_l)\|_{L^1_{x}}\lesssim \|(\mathring{R}^u_l,\mathring{R}^B_l)\|_{L^1_{x}}\lesssim \delta_{q+1},\label{2.9}\\
&\|\mathring{R}^u_{com}\|_{L^p_{x}}\lesssim \|\mathring{R}^u_{com}\|_{C_{t,x}}\lesssim l\|(u_q,B_q)\|^2_{C^1_{t,x}}\lesssim l \lambda_q^{8}\lesssim \lambda_q^{-12}.\label{2.10}
\end{align}
Similarly,
\begin{align}
&\|(\mathring{R}^B_{com,1},\mathring{R}^B_{com,2})\|_{L^p_{x}}\lesssim \lambda_q^{-12}.\label{2.11}
\end{align}
By the definitions of $\mathring{R}^u_l$ and $\mathring{R}^B_l$, one has
\begin{align}
&{\rm suppt}_t\mathring{R}^u_l\subseteq N_l({\rm suppt}_t\mathring{R}^u_q),\label{2.12}\\
&{\rm suppt}_t\mathring{R}^B_l\subseteq N_l({\rm suppt}_t\mathring{R}^B_q).\label{2.13}
\end{align}

%%%%%%%%%%%%%%%%%%%%%%%%%%%%%%%%%%%%%%%%%%%%%%%%%%%%%%%%%%%%%%%%%%%%%%%%%%%%%%%%%%%%%%%%%%%%%%%%%%%%%%%%%%%%%%%%%%%%%%%%%%%%%%%%%%%%%%%%%%%%%%%%%%%%%%%%%%%%%%%%%%%%%%%%%%%%%%%%%%%%%%

\section{Intermittent flows}\label{sec3}
We set the scaling parameters $\mu$ and $\sigma$ as
\begin{align}\label{para}
\mu=\lambda_{q+1},\quad \sigma=\lambda_{q+1}^\varepsilon,
\end{align}
where $\varepsilon$ is the small constant in \eqref{1.11}.

Let's recall the following geometric lemma in \cite{bcv}.
\begin{Lemma}\label{lem3.1}
For $\alpha=1,2$, there exist disjoint subsets $\Lambda_\alpha\subset \mathbb{S}^2\cap\mathbb{Q}^3$ and smooth functions $\Gamma_k: \mathcal{N}\rightarrow\mathbb{R}$ such that
\begin{align}
R=\sum_{k\in \Lambda_\alpha}\Gamma_k^2(R)k\otimes k,
\end{align}
for symmetric matrix $R$ satisfying $|R-{\rm Id}|\leq\delta$, where $\delta$ is a small constant.
\end{Lemma}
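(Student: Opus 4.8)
The plan is to prove Lemma \ref{lem3.1} by the classical geometric argument going back to De Lellis--Sz\'ekelyhidi, reducing the existence of the decomposition to the fact that the identity lies in the interior of the convex cone generated by the rank-one matrices $k\otimes k$, and then upgrading to smooth coefficients via the implicit function theorem. Throughout, I would take $\mathcal{N}$ to be the ball $\{R : R=R^T,\ |R-\mathrm{Id}|\le\delta\}$ in the $6$-dimensional space $\mathrm{Sym}$ of symmetric $3\times3$ matrices, with $\delta$ chosen small at the very end.

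First I would establish the interior/spanning property. Using the integral representation $\fint_{\mathbb{S}^2} k\otimes k\, d\sigma(k)=\tfrac13\mathrm{Id}$, where $d\sigma$ is the normalized surface measure, the identity is a strictly positive average of the continuum $\{k\otimes k\}$; hence $\mathrm{Id}$ is an interior point of the convex cone they generate in $\mathrm{Sym}$. Since rational directions are dense in $\mathbb{S}^2$ and $k\mapsto k\otimes k$ is continuous, the rational rank-one matrices $\{k\otimes k : k\in\mathbb{S}^2\cap\mathbb{Q}^3\}$ already positively span a neighborhood of $\mathrm{Id}$. By Carath\'eodory's theorem, together with a perturbation guaranteeing that the selected matrices span all of $\mathrm{Sym}$, I can extract a finite set $\Lambda\subset\mathbb{S}^2\cap\mathbb{Q}^3$ and strictly positive numbers $\bar\gamma_k>0$ with $\mathrm{Id}=\sum_{k\in\Lambda}\bar\gamma_k^2\,k\otimes k$ and such that $\{k\otimes k\}_{k\in\Lambda}$ spans $\mathrm{Sym}$.

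Next I would produce the smooth coefficients. Consider the smooth map $F:\mathbb{R}^{\Lambda}\to\mathrm{Sym}$ given by $F(\gamma)=\sum_{k\in\Lambda}\gamma_k^2\,k\otimes k$. Its differential at $\bar\gamma$ is $\delta\gamma\mapsto 2\sum_{k}\bar\gamma_k\,\delta\gamma_k\,k\otimes k$, which is surjective precisely because the $k\otimes k$ span $\mathrm{Sym}$ and $\bar\gamma_k>0$. By the implicit function theorem, choosing a right inverse to $DF(\bar\gamma)$, there is a neighborhood of $\mathrm{Id}$ and a smooth map $R\mapsto(\gamma_k(R))_{k\in\Lambda}$ with $F(\gamma(R))=R$ and $\gamma_k(R)$ close to $\bar\gamma_k$; shrinking $\delta$ keeps $\gamma_k(R)\ge\tfrac12\bar\gamma_k>0$ on $\mathcal{N}$. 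Setting $\Gamma_k:=\gamma_k$, which is smooth and positive on $\mathcal{N}$, gives $R=\sum_{k\in\Lambda}\Gamma_k^2(R)\,k\otimes k$ for all $R\in\mathcal{N}$.

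Finally, for the disjointness demanded by the two families $\alpha=1,2$, I would set $\Lambda_1:=\Lambda$ as above and $\Lambda_2:=O\Lambda_1$ for a rotation $O\in\mathrm{SO}(3)$ with rational entries fixing no element of $\Lambda_1$; such rational rotations exist and map $\mathbb{S}^2\cap\mathbb{Q}^3$ into itself, so $\Lambda_2\subset\mathbb{S}^2\cap\mathbb{Q}^3$ is disjoint from $\Lambda_1$. Since $O(k\otimes k)O^T=(Ok)\otimes(Ok)$ and $O^T R\,O\in\mathcal{N}$ whenever $R\in\mathcal{N}$, the coefficients $\Gamma_{Ok}(R):=\Gamma_k(O^T R\,O)$ inherit smoothness and yield the decomposition for $\Lambda_2$. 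The main obstacle I anticipate is not any single analytic step but the simultaneous bookkeeping of three open constraints: rationality of the directions (needed for periodicity on $\mathbb{T}^2$), disjointness of $\Lambda_1$ and $\Lambda_2$, and preservation of the spanning property. All three are stable under small perturbation, so a density argument reconciles them, and the genuine content of the lemma remains the interior-point property together with the implicit function theorem.
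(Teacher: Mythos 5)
Your argument is correct, but it is a genuinely different route from the paper's. The paper proves the lemma by brute-force explicit construction: it takes $\Lambda_1=\{\frac{3}{5}e_1\pm\frac{4}{5}e_2,\ \frac{4}{5}e_1\pm\frac{3}{5}e_3,\ \frac{3}{5}e_2\pm\frac{4}{5}e_3\}$ and $\Lambda_2$ built analogously from the $5$-$12$-$13$ Pythagorean triple, checks directly that $\sum_{k\in\Lambda_\alpha}\frac{1}{2}k\otimes k={\rm Id}$ (the cross terms cancel in each $\pm$ pair), and then invokes the implicit function theorem around ${\rm Id}$ exactly as you do; disjointness of $\Lambda_1$ and $\Lambda_2$ is immediate because the two sets are written down and visibly different. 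You instead derive the interior-point property abstractly from $\fint_{\mathbb{S}^2}k\otimes k\,d\sigma=\frac{1}{3}{\rm Id}$, extract a finite rational spanning family via density and Carath\'eodory, and obtain $\Lambda_2$ by conjugating with a rational rotation. Your approach is more flexible (it works in any dimension and produces arbitrarily many disjoint families), while the paper's explicit choice buys something your existence proof does not automatically supply: the vectors come with \emph{rational orthonormal completions} $(k,A_k,k\times A_k)$, tabulated in the appendix, which the paper needs downstream to define $N_\Lambda$, the periodized profiles, and the planar projections $\tilde{k},\tilde{A_k}$ with $k\cdot(\tilde{A_k},0)=0$; in your framework this would require an extra step (e.g., Householder reflections $H={\rm Id}-2vv^{T}/|v|^{2}$ with $v=k-e_1$, which are rational and orthogonal). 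Two small points to tighten: choosing $O$ ``fixing no element of $\Lambda_1$'' is weaker than what you need, namely $O\Lambda_1\cap\Lambda_1=\emptyset$ (an element could be mapped to a \emph{different} element of $\Lambda_1$) --- density of rational rotations in ${\rm SO}(3)$ lets you avoid the finitely many bad conditions, as your closing remark suggests; and note $|O^{T}RO-{\rm Id}|=|R-{\rm Id}|$ under conjugation by an orthogonal matrix, which is what makes your transported coefficients $\Gamma_{Ok}(R)=\Gamma_k(O^{T}RO)$ well defined on $\mathcal{N}$, a point worth stating explicitly.
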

Foe convenience, we give the proof of this lemma in Appendix \ref{sec7}. Take $N_\Lambda\in \mathbb{N}$ such that
\begin{align}
\left\{N_\Lambda k,N_\Lambda A_k,N_\Lambda k\times A_k\right\}\subset N_\Lambda\mathbb{S}^2\cap\mathbb{N}^3,
\end{align}
for every $k\in \Lambda_1\cup\Lambda_2$, where $(k,A_k,k\times A_k)$ is orthonormal basis of $\mathbb{R}^3$.

Let $\Phi\in C^\infty_c([-1,1])$ be a cut-off function. Moreover, $\Phi$ is normalized such that $\phi=\frac{d^2}{dx^2}\Phi$ and
\begin{align}\label{3.4}
\frac{1}{2\pi}\int_{\mathbb{R}}\phi^2(x)dx=1.
\end{align}
Define the rescalings $\Phi_{\mu}$, $\phi_{\mu}\in C^\infty_c([-1,1])$ as
\begin{align}\label{3.5}
\Phi_{\mu}(x)=\mu^{-\frac{3}{2}}\Phi(\mu x),\quad \phi_{\mu}(x)=\mu^{\frac{1}{2}}\phi(\mu x).
\end{align}
By an abuse of notation, we periodize $\Phi_{\mu}$ and $\phi_{\mu}$ such that they are treated as periodic functions defined on $\mathbb{T}$. By a direct computation, one has $\phi_\mu=\frac{d^2}{dx^2}\Phi_\mu$, and
\begin{align}\label{3.6}
\fint_{\mathbb{T}}\phi_\mu^2(x)dx=1.
\end{align}

For the large parameter $\sigma\in \mathbb{N}$ defined in \eqref{para} and $k\in \Lambda_1\cup\Lambda_2$, we define $\Phi_{(k)}:\mathbb{T}^2\rightarrow \mathbb{R}$ as
\begin{align}\label{3.7}
\Phi_{(k)}(x)=\frac{1}{(N_\Lambda\sigma)^2}\Phi_\mu\left(\sigma N_\Lambda(x-x_k)\cdot\tilde{A_k}\right),
\end{align}
where the vectors $\tilde{k}$ and $\tilde{A_k}$ are defined in Tab. \ref{tab3} and $x_k\in \mathbb{R}^2$ are shifts that ensure that
\begin{align}\label{3.8}
{\rm supp}\Phi_k\cap{\rm supp}\Phi_{k'}=\emptyset\;\;while\;\;k\neq k'\;\;but\;\;\tilde{A_k}=\tilde{A_{k'}}.
\end{align}
Note that such shifts $\{x_k\}$ exist. Indeed, the supports of $\Phi_k$ and $\Phi_{k'}$ are contained in parallel thin rectangles with width $\sim(\mu\sigma)^{-1}$ while $k\neq k'$ but $\tilde{A_k}=\tilde{A_{k'}}$, and we require that $\mu$ and $\sigma$ to be sufficiently large.

Similarly, we define $\phi_{(k)}:\mathbb{T}^2\rightarrow \mathbb{R}$ as
\begin{align}\label{3.9}
\phi_{(k)}(x)=\phi_\mu\left(\sigma N_\Lambda(x-x_k)\cdot\tilde{A_k}\right).
\end{align}
Then a direct computation implies that $\phi_{(k)}=\Delta \Phi_{(k)}$ and
\begin{align}\label{3.10}
\fint_{\mathbb{T}^2}\phi_{(k)}^2 dx=1.
\end{align}

The Mikado flows $W_{(k)}:\mathbb{T}^2\rightarrow\mathbb{R}^3$ are defined as
\begin{align}\label{3.11}
W_{(k)}=\phi_{(k)}k.
\end{align}
Define the skew-symmetric tensors $\Omega_{(k)}\in C_c^\infty(\mathbb{T}^2\rightarrow\mathbb{R}^3\times\mathbb{R}^3)$ as
\begin{align}\label{3.12}
\Omega_{(k)}=k\otimes \nabla\Phi_{(k)}-\nabla\Phi_{(k)}\otimes k.
\end{align}
Then we have the following lemma.
\begin{Lemma}\label{lem3.2}
For the stationary Mikado flows $W_{(k)}$, the following statements holds.\\
(1) Each $W_{(k)}\in C^\infty_c(\mathbb{T}^2)$ is divergence-free, and satisfies
 $${\rm div}\Omega_{(k)}=W_{(k)},\quad{\rm div} (W_{(k)}\otimes W_{(k)})=0,$$
 and
  \begin{align}\label{3.13}
\fint_{\mathbb{T}^2}W_{(k)}\otimes W_{(k)}dx=k\otimes k.
\end{align}
  (2) For any $1\leq p\leq\infty$, we have the following estimates:
  \begin{align}\label{3.14}
\|\nabla^m\phi_{(k)}\|_{L^p(\mathbb{T}^2)}+\|\nabla^mW_{(k)}\|_{L^p(\mathbb{T}^2)}\lesssim_m(\mu\sigma)^m\mu^{\frac{1}{2}-\frac{1}{p}},
\end{align}
\begin{align}\label{3.15}
\|\nabla^m\Omega_{(k)}\|_{L^p(\mathbb{T}^2)}\lesssim_m(\mu\sigma)^{m-1}\mu^{\frac{1}{2}-\frac{1}{p}}.
\end{align}
  (3) For $k\neq k'\in \Lambda_1\cup\Lambda_2$ and $1\leq p\leq\infty$, one has
  \begin{align}\label{3.16}
\|W_{(k)}\otimes W_{(k')}\|_{L^p(\mathbb{T}^2)}\lesssim\mu^{1-\frac{2}{p}}.
\end{align}
\begin{proof}
The first claim can be justified by the fact that $k\cdot(\tilde{A_k},0)=0$, which is obtained by Tab. \ref{tab3} in Appendix, while the second one is similar to \cite[Theorem 4.3]{cl}. Next, we prove the third claim. For $k\neq k'$ but $\tilde{A_k}=\tilde{A_{k'}}$, from \eqref{3.8}, we infer that
\begin{align}\label{3.17}
W_{(k)}\otimes W_{(k')}=0.
\end{align}

For $k\neq k'$ and $\tilde{A_k}\neq\tilde{A_{k'}}$, the set ${\rm supp}W_{(k)}\cap{\rm supp}W_{(k')}$ will be contained in the intersection area as one of the following cases:
 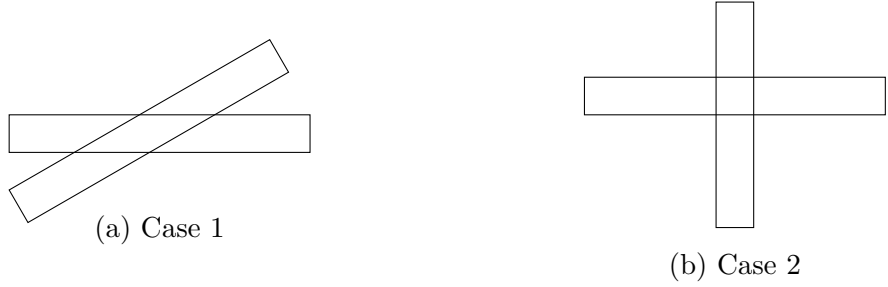
\begin{figure}[H]
    \centering
    \begin{subfigure}[c]{0.45\textwidth}
        \centering
        \begin{tikzpicture}
            \draw (0,1) rectangle (4,0.5);
            \draw[rotate=30] (0,0) rectangle (4,-0.5);
            \node[below] at (2,-0.2) {(a) Case 1};
        \end{tikzpicture}
        \caption*{}
    \end{subfigure}
    \quad\quad
    \begin{subfigure}[c]{0.45\textwidth}
        \centering
        \begin{tikzpicture}
            \draw (0,0) rectangle (4,0.5);
            \draw (1.75,-1.5) rectangle (2.25,1.5);
            \node[below] at (2,-1.7) {(b) Case 2};
        \end{tikzpicture}
        \caption*{}
    \end{subfigure}
    \vspace{-20pt}
    \caption{Typical cases of intersections.}
    \label{fig1}
\end{figure}\vspace{-0.5cm}
\noindent We estimate the supports of $W_{(k)}\otimes W_{(k')}$. As $W_{(k)}\otimes W_{(k')}$ is $(\mathbb{T}/\sigma)^2$-periodic, it suffices to estimate the volume of support on the small squares with side length $2\pi/\sigma$ and then multiply the resulting by $\sigma^2$. On each of these squares, the support of $W_{(k)}\otimes W_{(k')}$ consists of at most $2N_\Lambda$ parallel thin rectangles with length~$\sigma^{-1}$ and width $(\mu\sigma)^{-1}$. Since $\tilde{A_k}\neq\tilde{A_{k'}}$ and the sets $\Lambda_{\alpha}$ are finite, the intersections between different supports of $W_{(k)}$ and $W_{(k')}$ are contained in much smaller rectangles in Figure \ref{fig1} with length and width bounded by $(\mu\sigma)^{-1}$, thus one has
$$\left|{\rm supp}(W_{(k)}\otimes W_{(k')})\right|\lesssim(\mu\sigma)^{-2}\sigma^2\lesssim\mu^{-2}.$$
Then for $1\leq p\leq\infty$ and $\tilde{A_k}\neq\tilde{A_{k'}}$, by \eqref{3.14}, one has
\begin{align}\label{3.18}
\|W_{(k)}\otimes W_{(k')}\|_{L^p}\lesssim\|W_{(k)}\|_{L^\infty}\|W_{(k')}\|_{L^\infty}\left|{\rm supp}(W_{(k)}\otimes W_{(k')})\right|^{\frac{1}{p}}\lesssim\mu^{1-\frac{2}{p}}.
\end{align}
Combining \eqref{3.17} with \eqref{3.18}, we conclude \eqref{3.16}.

\end{proof}
\end{Lemma}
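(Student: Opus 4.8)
The plan is to exploit two structural features throughout: first, that $\phi_{(k)}$ and $\Phi_{(k)}$ are one-dimensional profiles composed with the single linear coordinate $\xi_k:=\sigma N_\Lambda(x-x_k)\cdot\tilde{A_k}$, so that every spatial derivative produces the factor $\sigma N_\Lambda\tilde{A_k}$; and second, the orthogonality $k\cdot(\tilde{A_k},0)=0$ recorded in Table \ref{tab3}, which annihilates any profile derivative contracted against $k$. For claim (1) I would write $\mathrm{div}\,W_{(k)}=(k_1\partial_{x_1}+k_2\partial_{x_2})\phi_{(k)}=\sigma N_\Lambda\,\phi_\mu'(\xi_k)\,\big((k_1,k_2)\cdot\tilde{A_k}\big)=0$, the last factor vanishing by orthogonality. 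The same contraction gives $\mathrm{div}(W_{(k)}\otimes W_{(k)})=(W_{(k)}\cdot\nabla)W_{(k)}=\phi_{(k)}\big(k\cdot\nabla\phi_{(k)}\big)k=0$, using $\mathrm{div}\,W_{(k)}=0$. For $\mathrm{div}\,\Omega_{(k)}=W_{(k)}$ I would expand $(\mathrm{div}\,\Omega_{(k)})_i=k_i\,\Delta\Phi_{(k)}-\big((k\cdot\nabla)\partial_{x_i}\Phi_{(k)}\big)$; the first term equals $k_i\phi_{(k)}$ since $\phi_{(k)}=\Delta\Phi_{(k)}$, while the second vanishes once more because $k\cdot\nabla$ applied to any function of $\xi_k$ is proportional to $(k_1,k_2)\cdot\tilde{A_k}=0$. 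The mean identity \eqref{3.13} then follows from $W_{(k)}\otimes W_{(k)}=\phi_{(k)}^2\,k\otimes k$ together with the normalization \eqref{3.10}.

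For the estimates (2) I would reduce everything to a one-dimensional concentration count. The profile $\phi_\mu=\mu^{1/2}\phi(\mu\,\cdot)$ has height $\mu^{1/2}$ and is concentrated on a set of width $\mu^{-1}$; after composition with $\xi_k$ it becomes, along the $\tilde{A_k}$ direction on $\mathbb{T}$, a $\sigma N_\Lambda$-periodic array of bumps of width $(\mu\sigma)^{-1}$, and it is constant in the perpendicular direction. Counting the bumps gives $\|\phi_{(k)}\|_{L^p(\mathbb{T}^2)}\sim\mu^{1/2-1/p}$, and since each derivative contributes a factor $\sim\mu\sigma$ (one $\sigma N_\Lambda$ from the chain rule and one $\mu$ from differentiating $\phi$), I obtain \eqref{3.14}; the bound for $W_{(k)}=\phi_{(k)}k$ is identical because $|k|=1$. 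For \eqref{3.15} I would note that $\nabla^m\Omega_{(k)}$ is controlled by $\nabla^{m+1}\Phi_{(k)}$; the extra prefactor $(N_\Lambda\sigma)^{-2}$ in the definition \eqref{3.7} of $\Phi_{(k)}$ together with $\Phi_\mu=\mu^{-3/2}\Phi(\mu\,\cdot)$ lowers the $\mu$- and $\sigma$-powers by one relative to $\phi_{(k)}$, giving exactly $(\mu\sigma)^{m-1}\mu^{1/2-1/p}$.

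The decisive and most delicate step is the interaction estimate (3), and I expect it to be the main obstacle. When $\tilde{A_k}=\tilde{A_{k'}}$ but $k\neq k'$ there is nothing to prove: the shifts $x_k$ were chosen in \eqref{3.8} precisely to make the parallel strips disjoint, so $W_{(k)}\otimes W_{(k')}\equiv0$. The real work is the case $\tilde{A_k}\neq\tilde{A_{k'}}$, where the two supports are families of long thin strips meeting at a nonzero angle, so one cannot simply translate them apart. My plan is to pass to a fundamental cell of the common period --- a square of side $\sim\sigma^{-1}$ --- on which each factor is supported in at most $O(N_\Lambda)$ parallel strips of length $\sim\sigma^{-1}$ and width $\sim(\mu\sigma)^{-1}$. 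Because the finite sets $\Lambda_1\cup\Lambda_2$ yield only finitely many directions $\tilde{A_k}$, the angle between $\tilde{A_k}$ and $\tilde{A_{k'}}$ is bounded below by a fixed constant, so each crossing of a $k$-strip with a $k'$-strip is confined to a parallelogram with both side lengths $\lesssim(\mu\sigma)^{-1}$ (Figure \ref{fig1}). Summing the $O(1)$ crossings per cell over the $\sim\sigma^2$ cells gives $|\mathrm{supp}(W_{(k)}\otimes W_{(k')})|\lesssim(\mu\sigma)^{-2}\sigma^2=\mu^{-2}$. Finally I would combine this with the $L^\infty$ bound $\|W_{(k)}\|_{L^\infty}\lesssim\mu^{1/2}$ from \eqref{3.14} via H\"{o}lder's inequality on the support, $\|W_{(k)}\otimes W_{(k')}\|_{L^p}\le\|W_{(k)}\|_{L^\infty}\|W_{(k')}\|_{L^\infty}|\mathrm{supp}|^{1/p}\lesssim\mu^{1-2/p}$, which is \eqref{3.16}. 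The subtlety to watch, as emphasized in the New-ingredient discussion, is that the strips are as long as the period rather than short Mikado tubes, so the support bound must be extracted from the transversality of the two directions rather than from any separation of scales.
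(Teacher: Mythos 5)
Your proposal is correct and follows essentially the same route as the paper: claim (1) via the orthogonality $k\cdot(\tilde{A_k},0)=0$, claim (2) via one-dimensional scaling of the profiles, and claim (3) by splitting into the parallel case (disjoint supports from the shifts in \eqref{3.8}) and the transversal case (support measure $\lesssim(\mu\sigma)^{-2}\sigma^2=\mu^{-2}$ per the cell-counting argument, then H\"older). The only difference is that you spell out the computations for (1) and (2) which the paper disposes of by pointing to Table \ref{tab3} and citing \cite[Theorem 4.3]{cl}, and your filled-in details are accurate.
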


%%%%%%%%%%%%%%%%%%%%%%%%%%%%%%%%%%%%%%%%%%%%%%%%%%%%%%%%%%%%%%%%%%%%%%%%%%%%%%%%%%%%%%%%%%%%%%%%%%%%%%%%%%%%%%%%%%%%%%%%%%%%%%%%%%%%%%%%%%%%%%%%%%%%%%%%%%%%%%%%%%%%%%%%%%%%%%%%%%%%%%
\section{Velocity and magnetic perturbations}\label{sec4}
This section is devoted to the construction of velocity and magnetic perturbations, which consist principle parts and corrector parts. The principle parts are used to cancel the old Reynolds and magnetic stresses, while the corrector parts ensure that the perturbations are divergence-free.
\subsection{Magnetic perturbations}
Let $\chi:\mathbb{R}^3\times\mathbb{R}^3\rightarrow\mathbb{R}^+$ be a smooth function such that $\chi$ is monotonically increasing with respect to $|x|$ and
\begin{align}\label{4.1}
 \chi(x)=\left\{
\begin{aligned}
&2\delta^{-1}\|(\mathring{R}^u_l,\mathring{R}^B_l)\|_{L^\infty_{t}L^1_{x}},\quad 0\leq|x|\leq\|(\mathring{R}^u_l,\mathring{R}^B_l)\|_{L^\infty_{t}L^1_{x}}, \\
&4\delta^{-1}|x|, \qquad\qquad\qquad\quad |x|\geq2\|(\mathring{R}^u_l,\mathring{R}^B_l)\|_{L^\infty_{t}L^1_{x}},\\
\end{aligned}
\right.
\end{align}
where $\delta$ is a small constant defined in Lemma \ref{lem3.1}.

Set $\rho_B(x,t)=\chi(\mathring{R}^B_l)$, and then by the definition of $\chi(x)$, we infer that
\begin{align}\label{4.2}
|\rho_B^{-1}(x,t)\mathring{R}^B_l|\leq\delta,
\end{align}
and
\begin{align}\label{4.3}
\|\rho_B^{\frac{1}{2}}\|_{L^2_x}=\|\rho_B\|^{\frac{1}{2}}_{L^1_x}\lesssim\|(\mathring{R}^u_l,\mathring{R}^B_l)\|^{\frac{1}{2}}_{L^1_x}\lesssim\delta_{q+1}^{\frac{1}{2}},
\end{align}
where the last inequality is given by \eqref{2.9}.

Define $\theta_B$ be a smooth temporal cut-off function as\\
(1) $0\leq\theta_B\leq1$, $\theta_B=1$ on ${\rm supp}_t\mathring{R}^B_l$;\\
(2) ${\rm supp}_t\theta_B\subseteq N_l({\rm supp}_t\mathring{R}^B_l)$;\\
(3) $\|\theta_B\|_{C^N}\lesssim l^{-N}$, for $1\leq N\leq4$.\\
Then we define the coefficient functions as
\begin{align}\label{4.4}
a_{(k)}(x,t)=\theta_B(t)\rho_B^{\frac{1}{2}}(x,t)\gamma_k\left({\rm Id}-\rho_B^{-1}(x,t)\mathring{R}^B_l(x,t)\right),\quad k\in \Lambda_2,
\end{align}
where $\gamma_k$  is the smooth function in Lemma \ref{lem3.1}.

Combining Lemma \ref{lem3.1}, identity \eqref{3.13} and the fact \eqref{4.2}, we have
\begin{align}\label{4.5}
\sum_{k\in\Lambda_2}a_{(k)}^2\fint_{\mathbb{T}^2}W_{(k)}\otimes W_{(k)}dx=\sum_{k\in\Lambda_2}\theta_B^2\rho_B\gamma_k^2\left({\rm Id}-\rho_B^{-1}\mathring{R}^B_l\right)k\otimes k=\theta_B^2\rho_B{\rm Id}-\mathring{R}^B_l.
\end{align}

In view of inequality \eqref{2.8}, \eqref{2.9} and \eqref{4.3}, similar to \cite[Lemma 4.1]{lzz}, we have the following lemma.
\begin{Lemma}\label{lem4.1}
For $1\leq N\leq4$ and $k\in \Lambda_2$, one has
\begin{align}
&\|a_{(k)}\|_{L^2_x}\lesssim\delta_{q+1}^{\frac{1}{2}},\label{4.6}\\
&\|a_{(k)}\|_{C_{t,x}}\lesssim l^{-1},\quad\|a_{(k)}\|_{C^N_{t,x}}\lesssim l^{-4N}.\label{4.7}
\end{align}
\end{Lemma}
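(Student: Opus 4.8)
The plan is to derive all three estimates directly from the explicit product structure $a_{(k)}=\theta_B\,\rho_B^{1/2}\,\gamma_k(\mathrm{Id}-\rho_B^{-1}\mathring{R}^B_l)$, handling the three factors $\theta_B$, $\rho_B^{1/2}$ and $\gamma_k(\cdots)$ separately and recombining them with the Leibniz rule. The decisive preliminary remark is that by \eqref{4.2} the matrix argument $\mathrm{Id}-\rho_B^{-1}\mathring{R}^B_l$ satisfies $|(\mathrm{Id}-\rho_B^{-1}\mathring{R}^B_l)-\mathrm{Id}|\leq\delta$, so it never leaves the neighborhood of the identity on which the functions $\gamma_k$ of Lemma \ref{lem3.1} are defined and smooth. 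Consequently $\gamma_k$ and each of its derivatives, evaluated along this argument, are bounded by absolute constants, a fact that will be used repeatedly.

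The bound \eqref{4.6} is then immediate: since $0\leq\theta_B\leq1$ and $\gamma_k$ is uniformly bounded, one has $\|a_{(k)}\|_{L^2_x}\lesssim\|\rho_B^{1/2}\|_{L^2_x}$, and \eqref{4.3} already gives $\|\rho_B^{1/2}\|_{L^2_x}\lesssim\delta_{q+1}^{1/2}$. For the sup-norm part of \eqref{4.7} I would estimate $\rho_B=\chi(\mathring{R}^B_l)$ in $C_{t,x}$: since $\chi$ grows at most linearly (it equals the constant $2\delta^{-1}\|(\mathring{R}^u_l,\mathring{R}^B_l)\|_{L^\infty_tL^1_x}$ near the origin and $4\delta^{-1}|x|$ for large $|x|$), the bounds \eqref{2.9} and \eqref{2.8} yield $\|\rho_B\|_{C_{t,x}}\lesssim l^{-1}$, whence $\|a_{(k)}\|_{C_{t,x}}\lesssim\|\rho_B^{1/2}\|_{C_{t,x}}\lesssim l^{-1}$.

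The genuine work, and the main obstacle, is the higher-derivative estimate $\|a_{(k)}\|_{C^N_{t,x}}\lesssim l^{-4N}$ for $1\leq N\leq4$. Here I would apply the standard composition (Fa\`a di Bruno) inequality, schematically $\|F\circ g\|_{C^N}\lesssim\|DF\|_{C^0}\|g\|_{C^N}+\|D^NF\|_{C^0}\|g\|_{C^1}^N$, to each of the composite factors $\rho_B^{1/2}=\chi(\mathring{R}^B_l)^{1/2}$ and $\gamma_k(\mathrm{Id}-\rho_B^{-1}\mathring{R}^B_l)$, and then combine these with $\|\theta_B\|_{C^N}\lesssim l^{-N}$ through the Leibniz rule. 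Two points require care. First, every spatial or temporal derivative landing on $\mathring{R}^B_l$ costs a factor $l^{-1}$ by \eqref{2.8}, and these must be counted across the nested compositions. Second, differentiating $\rho_B^{-1}$ and $\rho_B^{\pm1/2}$ produces negative powers of $\rho_B$; these are controlled by the uniform lower bound $\rho_B\geq2\delta^{-1}\|(\mathring{R}^u_l,\mathring{R}^B_l)\|_{L^\infty_tL^1_x}>0$ together with the smoothness of $\chi$ on the relevant range, though the corresponding prefactors may themselves be large. Because $a_{(k)}$ is a four-fold product of such nested compositions in $\mathring{R}^B_l$ and $\theta_B$, a crude accounting in which each factor can absorb the derivative loss accounts for the stated exponent $l^{-4N}$; since only $N\leq4$ is needed, no sharper constant is required. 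The computation parallels that of \cite[Lemma 4.1]{lzz}, the only new feature being that the building blocks now live on $\mathbb{T}^2$ and $k$ ranges over $\Lambda_2$.
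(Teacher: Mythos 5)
Your proposal is correct and follows essentially the same route as the paper, whose ``proof'' consists precisely of recording the ingredients \eqref{2.8}, \eqref{2.9} and \eqref{4.3} and deferring the Leibniz/Fa\`a di Bruno bookkeeping to \cite[Lemma 4.1]{lzz}. Your two key observations --- that \eqref{4.2} keeps the argument of $\gamma_k$ in the $\delta$-neighborhood of ${\rm Id}$, where $\gamma_k$ and its derivatives are uniformly bounded, and that each derivative landing on $\theta_B$ or $\mathring{R}^B_l$ costs a factor $l^{-1}$, so that $l^{-4N}$ is a generous count for $N\leq 4$ --- are exactly the content of that cited computation, and your explicit flagging of the lower bound $\rho_B\geq 2\delta^{-1}\|(\mathring{R}^u_l,\mathring{R}^B_l)\|_{L^\infty_t L^1_x}$ when differentiating $\rho_B^{-1}$ matches (indeed slightly exceeds) the level of detail the paper itself provides.
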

Now we are ready to define the magnetic perturbations. The principle part $d_{q+1}^{(p)}$ is given by
\begin{align}\label{4.8}
d_{q+1}^{(p)}:=\sum_{k\in \Lambda_2}a_{(k)}W_{(k)},
\end{align}
and the corresponding incompressibility corrector is given by
\begin{align}\label{4.9}
d_{q+1}^{(c)}:=\sum_{k\in \Lambda_2}\nabla a_{(k)}:\Omega_{(k)}.
\end{align}
Then from Lemma \ref{lem3.2}, one has
\begin{align}\label{4.10}
d_{q+1}^{(p)}+d_{q+1}^{(c)}={\rm div}\sum_{k\in \Lambda_2} a_{(k)}\Omega_{(k)}.
\end{align}
Noting that $a_{(k)}\Omega_{(k)}$ is skew-symmetric matrix, we can infer that ${\rm div}(d_{q+1}^{(p)}+d_{q+1}^{(c)})=0$.

The magnetic perturbation and new magnetic field at level $q+1$ are defined by
\begin{align}
&d_{q+1}=d_{q+1}^{(p)}+d_{q+1}^{(c)},\label{4.11}\\
&B_{q+1}=B_l+d_{q+1},\label{4.12}
\end{align}
then we have ${\rm div}d_{q+1}={\rm div}B_{q+1}=0$.

%%%%%%%%%%%%%%%%%%%%%%%%%%%%%%%%%%%%%%%%%%%%%%%%%%%%%%%%%%%%%%%%%%%%%%%%%%%%%%%%%%%%%%%%%%%%%%%%%%%%%%%%%%%%%%%%%%%%%%%%%%%%%%%%%%%%%%%%%%%%%%%%%%%%%%%%%%%%%%%%%%%%%%%%%%%%%%%%%%%%%%
\subsection{The velocity perturbations}
Inspired by \cite{lzz}, in order to maintain the cancellation between the perturbations and old stresses, we define
\begin{align}\label{4.13}
G^B:=\sum_{k\in \Lambda_2}a_{(k)}^2\fint_{\mathbb{T}^2}W_{(k)}\otimes W_{(k)}dx.
\end{align}
In view of Lemma \ref{lem4.1}, we have
\begin{align}\label{4.14}
\|G^B\|_{L^1_x}\lesssim\delta_{q+1},\quad\|G^B\|_{C_{t,x}}\lesssim l^{-2},\quad \|G^B\|_{C^N_{t,x}}\lesssim l^{-4N-1}.
\end{align}
Set $\rho_u(x,t)=\chi(\mathring{R}^u_l-G^B)$, and by the definition of $\chi(x)$, then we have
\begin{align}\label{4.15}
|\rho_u^{-1}(\mathring{R}^u_l-G^B)|\leq\delta.
\end{align}
From estimates \eqref{2.9} and \eqref{4.14}, one has
\begin{align}\label{4.16}
\|\rho_u^{\frac{1}{2}}\|_{L^2_x}=\|\rho_u\|^{\frac{1}{2}}_{L^1_x}\lesssim\|(\mathring{R}^u_l,\mathring{R}^B_l)\|^{\frac{1}{2}}_{L^1_x}
+\|\mathring{R}^u_l-G^B\|^{\frac{1}{2}}_{L^1_x}\lesssim\delta_{q+1}^{\frac{1}{2}}.
\end{align}

We choose the smooth temporal cut-off function $\theta_u$ as\\
(1) $0\leq\theta_u\leq1$, $\theta_u=1$ on ${\rm supp}_t\mathring{R}^u_l\cup{\rm supp}_tG^B$;\\
(2) ${\rm supp}_t\theta_u\subseteq N_l({\rm supp}_t\mathring{R}^u_l\cup{\rm supp}_tG^B)\subseteq N_{2l}({\rm supp}_t\mathring{R}^u_l\cup{\rm supp}_t\mathring{R}^B_l)$;\\
(3) $\|\theta_u\|_{C^N}\lesssim l^{-N}$, for $1\leq N\leq4$.\\
Then we define the coefficient functions as
\begin{align}\label{4.16'}
a_{(k)}(x,t)=\theta_u(t)\rho_u^{\frac{1}{2}}(x,t)\gamma_k\left({\rm Id}-\rho_u^{-1}(x,t)(\mathring{R}^u_l-G^B)(x,t)\right),\quad k\in \Lambda_1,
\end{align}
where $\gamma_k$ is the smooth function in Lemma \ref{lem3.1}. Employing Lemma \ref{lem3.1}, \eqref{3.13} and  \eqref{4.15}, we have
\begin{align}\label{4.17}
\sum_{k\in\Lambda_1}a_{(k)}^2\fint_{\mathbb{T}^2}W_{(k)}\otimes W_{(k)}dx&=\sum_{k\in\Lambda_1}\theta_u^2\rho_u\gamma_k^2\left({\rm Id}-\rho_u^{-1}(\mathring{R}^u_l-G^B)\right)k\otimes k\notag\\
&=\theta_u^2\rho_u{\rm Id}-\mathring{R}^u_l+G^B.
\end{align}

In view of estimates \eqref{2.8}, \eqref{2.9}, \eqref{4.14} and \eqref{4.16}, similar to \cite[Lemma 4.2]{lzz}, we have
\begin{Lemma}\label{lem4.2}
For $1\leq N\leq4$ and $k\in \Lambda_1$, one has
\begin{align}
&\|a_{(k)}\|_{L^2_x}\lesssim\delta_{q+1}^{\frac{1}{2}},\label{4.18}\\
&\|a_{(k)}\|_{C_{t,x}}\lesssim l^{-1},\quad\|a_{(k)}\|_{C^N_{t,x}}\lesssim l^{-8N}.\label{4.19}
\end{align}
\end{Lemma}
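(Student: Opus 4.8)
The plan is to mirror the proof of Lemma~\ref{lem4.1} (and of \cite[Lemma 4.2]{lzz}), viewing $a_{(k)}$ as the product of three factors: the temporal cut-off $\theta_u$, the amplitude $\rho_u^{1/2}$, and the matrix coefficient $\gamma_k(\mathrm{Id}-\rho_u^{-1}(\mathring{R}^u_l-G^B))$. The only genuinely new feature compared with the case $k\in\Lambda_2$ is that the relevant stress $\mathring{R}^u_l-G^B$ now carries the weaker derivative bounds of $G^B$ recorded in \eqref{4.14}, and it is precisely this that upgrades the exponent from the $l^{-4N}$ of \eqref{4.7} to $l^{-8N}$.

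I would first dispose of \eqref{4.18}. By \eqref{4.15} the argument $\mathrm{Id}-\rho_u^{-1}(\mathring{R}^u_l-G^B)$ differs from $\mathrm{Id}$ by at most $\delta$, hence stays in the region on which $\gamma_k$ is smooth and bounded; together with $0\le\theta_u\le1$ this gives the pointwise bound $|a_{(k)}|\lesssim\rho_u^{1/2}$, so that $\|a_{(k)}\|_{L^2_x}\lesssim\|\rho_u^{1/2}\|_{L^2_x}=\|\rho_u\|_{L^1_x}^{1/2}\lesssim\delta_{q+1}^{1/2}$ by \eqref{4.16}.

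For \eqref{4.19}, the $C_{t,x}$ bound is immediate: \eqref{2.8} and \eqref{4.14} give $\|\mathring{R}^u_l-G^B\|_{C_{t,x}}\lesssim l^{-2}$, so the definition \eqref{4.1} of $\chi$ forces $\|\rho_u\|_{C_{t,x}}\lesssim l^{-2}$, whence $\|\rho_u^{1/2}\|_{C_{t,x}}\lesssim l^{-1}$ and $\|a_{(k)}\|_{C_{t,x}}\lesssim l^{-1}$. For the higher norms I would combine the Leibniz rule on the triple product with the Fa\`a di Bruno formula applied to the compositions $\rho_u=\chi(\mathring{R}^u_l-G^B)$, $s\mapsto s^{\pm1/2}$, $s\mapsto s^{-1}$, and $\gamma_k$. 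The two structural inputs are: (i) the uniform lower bound $\rho_u\ge 2\delta^{-1}\|(\mathring{R}^u_l,\mathring{R}^B_l)\|_{L^\infty_tL^1_x}\gtrsim\delta^{-1}\delta_{q+1}$, which makes $\rho_u^{-1}$ and $\rho_u^{\pm1/2}$ genuinely smooth with controlled derivatives; and (ii) the bound $\|\mathring{R}^u_l-G^B\|_{C^N_{t,x}}\lesssim l^{-4N-1}$ for $N\ge1$, in which the $G^B$ contribution from \eqref{4.14} dominates. Tracking the worst-case term, in which every derivative falls on $\mathring{R}^u_l-G^B$ through the nested compositions, and absorbing the bounded negative powers $\delta_{q+1}^{-m}$ (which by \eqref{1.9}--\eqref{1.10} and $l=\lambda_q^{-20}$ contribute only a negligible power of $l$), one obtains $\|\gamma_k(\mathrm{Id}-\rho_u^{-1}(\mathring{R}^u_l-G^B))\|_{C^N_{t,x}}\lesssim l^{-7N}$ and hence $\|a_{(k)}\|_{C^N_{t,x}}\lesssim l^{-1}\cdot l^{-7N}=l^{-7N-1}\lesssim l^{-8N}$ for $1\le N\le4$.

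The main obstacle is exactly this $C^N$ bookkeeping. Unlike in Lemma~\ref{lem4.1}, where $\mathring{R}^B_l$ has effective frequency $l^{-1}$ and the budget closes at $l^{-4N}$, here $\mathring{R}^u_l-G^B$ inherits the effective frequency $l^{-4}$ of $G^B$, so one must verify that the cumulative loss through $\chi$, through the reciprocal $\rho_u^{-1}$, and through $\gamma_k$ stays within $l^{-8N}$. Checking that the dominant contribution is only $l^{-7N-1}$, and that every $\delta_{q+1}^{-1}$ prefactor generated by differentiating $\rho_u^{-1}$ or $\rho_u^{-1/2}$ is absorbed into a negligible power of $l$, is the delicate point and is what pins down the exponent $8$.
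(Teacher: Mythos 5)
Your proposal follows essentially the same route as the paper: the paper gives no self-contained proof of Lemma \ref{lem4.2}, merely listing the inputs \eqref{2.8}, \eqref{2.9}, \eqref{4.14}, \eqref{4.16} and citing \cite[Lemma 4.2]{lzz}, and your Leibniz/Fa\`a di Bruno bookkeeping on the triple product $\theta_u\,\rho_u^{1/2}\,\gamma_k\bigl(\mathrm{Id}-\rho_u^{-1}(\mathring{R}^u_l-G^B)\bigr)$ is exactly the argument being delegated there; your accounting is right, with $\|\mathring{R}^u_l-G^B\|_{C^N_{t,x}}\lesssim l^{-4N-1}$ (the $G^B$ term dominating via \eqref{4.14}) driving the loss, and the slack $l^{-7N-1}\le l^{-8N}$ correctly explaining the exponent $8$ versus the $4$ of Lemma \ref{lem4.1}.

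One caveat: your structural input (i) reverses an inequality. The definition \eqref{4.1} does give $\rho_u\ge 2\delta^{-1}\|(\mathring{R}^u_l,\mathring{R}^B_l)\|_{L^\infty_t L^1_x}=:2\delta^{-1}K$, but \eqref{2.9} bounds $K$ from \emph{above} by $\delta_{q+1}$, not from below, so the asserted bound $\rho_u\gtrsim\delta^{-1}\delta_{q+1}$ need not hold: if the mollified stresses happen to be very small on $[0,T]$, the negative powers $\rho_u^{-m}$, $\rho_u^{1/2-m}$, and the factors $K^{1-m}$ coming from $D^m\chi$ on the transition region, are not controlled by any fixed power of $l^{-1}$, and your absorption step as written breaks down. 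The standard repair, implicit in the construction the paper cites, is the reduction that one may assume without loss of generality $K\ge\delta_{q+2}$ (otherwise the trivial choice $(u_{q+1},B_{q+1})=(u_l,B_l)$ already satisfies the conclusions of Theorem \ref{main iteration}); then $\delta_{q+2}^{-m}=l^{-m\beta b^2/10}$ is a negligible power of $l^{-1}$ by \eqref{1.9}--\eqref{1.10}, since $\beta b^2<\tfrac{1}{100}$, and the rest of your argument closes verbatim. Since the paper itself silently inherits this point from \cite{lzz}, I regard your write-up as a faithful expansion of the paper's proof modulo this one standard remark.
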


Now we are ready to define the velocity perturbations. The principle part $w_{q+1}^{(p)}$ is given by
\begin{align}\label{4.20}
w_{q+1}^{(p)}:=\sum_{k\in\Lambda_1}a_{(k)}W_{(k)},
\end{align}
and the corresponding incompressibility corrector is given by
\begin{align}\label{4.21}
w_{q+1}^{(c)}:=\sum_{k\in \Lambda_1}\nabla a_{(k)}:\Omega_{(k)}.
\end{align}
Then from Lemma \ref{lem3.2}, one has
\begin{align}\label{4.22}
w_{q+1}^{(p)}+w_{q+1}^{(c)}={\rm div}\sum_{k\in \Lambda_1} a_{(k)}\Omega_{(k)}.
\end{align}
Noting that $a_{(k)}\Omega_{(k)}$ is skew-symmetric matrix, we infer that ${\rm div}(w_{q+1}^{(p)}+w_{q+1}^{(c)})=0$.

The velocity perturbation and new velocity field at level $q+1$ are defined by
\begin{align}
&w_{q+1}=w_{q+1}^{(p)}+w_{q+1}^{(c)},\label{4.23}\\
&u_{q+1}=u_l+w_{q+1},\label{4.24}
\end{align}
then we have ${\rm div}w_{q+1}={\rm div}u_{q+1}=0$.

In the end of this subsection, we use the principle parts of perturbations to cancel the old magnetic and Reynolds stresses $\mathring{R}^B_l$ and $\mathring{R}^u_l$. By the definition \eqref{4.8} and equality \eqref{4.5}, we have
\begin{align}\label{4.25}
&d_{q+1}^{(p)}\otimes d_{q+1}^{(p)}+\mathring{R}^B_l\notag\\
&=\sum_{k\in \Lambda_2}a_{(k)}^2\fint_{\mathbb{T}^2}\left(W_{(k)}\otimes W_{(k)}\right)dx+\mathring{R}^B_l+\sum_{k\in \Lambda_2}a_{(k)}^2\mathbb{P}_{\neq0}\left(W_{(k)}\otimes W_{(k)}\right)\notag\\
&\quad\quad+\sum_{k\neq k'\in \Lambda_2}a_{(k)}a_{(k')}W_{(k)}\otimes W_{(k')}\notag\\
&=\theta_B^2\rho_B{\rm Id}+\sum_{k\in \Lambda_2}a_{(k)}^2\mathbb{P}_{\neq0}\left(W_{(k)}\otimes W_{(k)}\right)+\sum_{k\neq k'\in \Lambda_2}a_{(k)}a_{(k')}W_{(k)}\otimes W_{(k')}
\end{align}
Similarly, from the definitions \eqref{4.8} and \eqref{4.20}, and equality \eqref{4.17}, one has
\begin{align}\label{4.26}
&w_{q+1}^{(p)}\otimes w_{q+1}^{(p)}-d_{q+1}^{(p)}\otimes d_{q+1}^{(p)}+\mathring{R}^u_l\notag\\
&=\sum_{k\in \Lambda_1}a_{(k)}^2\fint_{\mathbb{T}^2}\left(W_{(k)}\otimes W_{(k)}\right)dx-\sum_{k\in \Lambda_2}a_{(k)}^2\fint_{\mathbb{T}^2}\left(W_{(k)}\otimes W_{(k)}\right)dx+\mathring{R}^u_l\notag\\
&\quad+\sum_{k\in \Lambda_1}a_{(k)}^2\mathbb{P}_{\neq0}\left(W_{(k)}\otimes W_{(k)}\right)-\sum_{k\in \Lambda_2}a_{(k)}^2\mathbb{P}_{\neq0}\left(W_{(k)}\otimes W_{(k)}\right)\notag\\
&\quad+\sum_{k\neq k'\in \Lambda_1}a_{(k)}a_{(k')}W_{(k)}\otimes W_{(k')}-\sum_{k\neq k'\in \Lambda_2}a_{(k)}a_{(k')}W_{(k)}\otimes W_{(k')}\notag\\
&=\theta_u^2\rho_u{\rm Id}+\sum_{k\in \Lambda_1}a_{(k)}^2\mathbb{P}_{\neq0}\left(W_{(k)}\otimes W_{(k)}\right)-\sum_{k\in \Lambda_2}a_{(k)}^2\mathbb{P}_{\neq0}\left(W_{(k)}\otimes W_{(k)}\right)\notag\\
&\quad+\sum_{k\neq k'\in \Lambda_1}a_{(k)}a_{(k')}W_{(k)}\otimes W_{(k')}-\sum_{k\neq k'\in \Lambda_2}a_{(k)}a_{(k')}W_{(k)}\otimes W_{(k')}
\end{align}

%%%%%%%%%%%%%%%%%%%%%%%%%%%%%%%%%%%%%%%%%%%%%%%%%%%%%%%%%%%%%%%%%%%%%%%%%%%%%%%%%%%%%%%%%%%%%%%%%%%%%%%%%%%%%%%%%%%%%%%%%%%%%%%%%%%%%%%%%%%%%%%%%%%%%%%%%%%%%%%%%%%%%%%%%%%%%%%%%%%%%%

%%%%%%%%%%%%%%%%%%%%%%%%%%%%%%%%%%%%%%%%%%%%%%%%%%%%%%%%%%%%%%%%%%%%%%%%%%%%%%%%%%%%%%%%%%%%%%%%%%%%%%%%%%%%%%%%%%%%%%%%%%%%%%%%%%%%%%%%%%%%%%%%%%%%%%%%%%%%%%%%%%%%%%%%%%%%%%%%%%%%%%
\subsection{Estimates of the perturbations}\label{sec4.3}
In order to obtain the inductive estimate \eqref{1.15}, we need the following decorrelation lemma, see \cite[Lemma 2.4]{cl1}.
\begin{Lemma}\label{lem4.3}
Let $\theta\in \mathbb{N}$ and $f,g:\mathbb{T}^2\rightarrow\mathbb{R}$ be smooth functions. Then for every $p\in[1,\infty]$,
\begin{align}\label{4.27}
\left|\|fg(\theta\cdot)\|_{L^p(\mathbb{T}^2)}-\|f\|_{L^p(\mathbb{T}^2)}\|g\|_{L^p(\mathbb{T}^2)}\right|\lesssim\theta^{-\frac{1}{p}}\|f\|_{C^1(\mathbb{T}^2)}\|g\|_{L^p(\mathbb{T}^2)}.
\end{align}
\end{Lemma}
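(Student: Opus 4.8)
The plan is to prove this by a localization argument on the torus, exploiting that $\theta\in\mathbb{N}$ so that the fast factor $g(\theta\,\cdot)$ completes an exact integer number of periods over each cell of a uniform partition of $\mathbb{T}^2$ at scale $\theta^{-1}$. First I would dispose of the endpoint $p=\infty$, where $\theta^{-1/p}=1$: since both $\|fg(\theta\cdot)\|_{L^\infty}$ and $\|f\|_{L^\infty}\|g\|_{L^\infty}$ are nonnegative and dominated by $\|f\|_{L^\infty}\|g\|_{L^\infty}\le\|f\|_{C^1}\|g\|_{L^\infty}$, their difference is trivially controlled. So from now on I assume $1\le p<\infty$. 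I would partition $\mathbb{T}^2$ into $\theta^2$ congruent closed sub-cubes $\{Q_i\}$ of side length comparable to $\theta^{-1}$, fix a reference point $x_i\in Q_i$, and introduce the piecewise-constant surrogate $\bar f:=\sum_i f(x_i)\mathbf{1}_{Q_i}$.

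The heart of the matter is an exact decorrelation identity for $\bar f$. Because $\theta$ is an integer, the change of variables $y=\theta x$ maps each $Q_i$ onto one full period of $g$, so that
\begin{align*}
\int_{Q_i}\big|g(\theta x)\big|^p\,dx=\theta^{-2}\int_{\mathbb{T}^2}|g(y)|^p\,dy=\theta^{-2}\|g\|_{L^p}^p .
\end{align*}
Summing against the constants $|f(x_i)|^p$ and recalling $|Q_i|\sim\theta^{-2}$ gives $\|\bar f\,g(\theta\cdot)\|_{L^p}^p=\|g\|_{L^p}^p\sum_i|f(x_i)|^p\theta^{-2}=\|\bar f\|_{L^p}^p\,\|g\|_{L^p}^p$, i.e. the clean identity $\|\bar f\,g(\theta\cdot)\|_{L^p}=\|\bar f\|_{L^p}\|g\|_{L^p}$. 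The same substitution also yields the invariance $\|g(\theta\cdot)\|_{L^p}=\|g\|_{L^p}$, which I will use in the error step.

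It then remains to compare $f$ with $\bar f$. By the mean value theorem and $\mathrm{diam}(Q_i)\sim\theta^{-1}$ one has the pointwise bound $\|f-\bar f\|_{L^\infty}\lesssim\theta^{-1}\|f\|_{C^1}$, using only the first-order smoothness of $f$. Feeding this into two triangle inequalities,
\begin{align*}
\Big|\|fg(\theta\cdot)\|_{L^p}-\|f\|_{L^p}\|g\|_{L^p}\Big|
&\le\|(f-\bar f)g(\theta\cdot)\|_{L^p}+\big|\|\bar f\|_{L^p}-\|f\|_{L^p}\big|\,\|g\|_{L^p}\\
&\lesssim\|f-\bar f\|_{L^\infty}\|g\|_{L^p}\lesssim\theta^{-1}\|f\|_{C^1}\|g\|_{L^p},
\end{align*}
where the first term used $\|g(\theta\cdot)\|_{L^p}=\|g\|_{L^p}$ and the second used the reverse triangle inequality together with $\|f-\bar f\|_{L^p}\le|\mathbb{T}^2|^{1/p}\|f-\bar f\|_{L^\infty}$. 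Since $\theta\ge1$ and $1/p\le1$, this in particular yields the claimed $\theta^{-1/p}$ bound (indeed the slightly stronger $\theta^{-1}$). The main obstacle, though structural rather than computational, is precisely the exact identity in the middle paragraph: it relies on $\theta$ being an \emph{integer}, so that $\theta Q_i$ is a full period of $g$, and on isolating the error so that only $\|f\|_{C^1}$ appears --- not higher derivatives of $f$, nor any norm of $g$ beyond $L^p$. For non-integer $\theta$ one would pick up boundary cells on which $g(\theta\cdot)$ fails to close up, and the clean decorrelation would degrade.
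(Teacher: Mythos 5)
Your proof is correct, but a direct comparison with ``the paper's proof'' is not possible: the paper does not prove this lemma at all --- it is quoted verbatim from Cheskidov--Luo \cite{cl1} (Lemma 2.4 there), whose standard proof (going back to Modena--Sz\'ekelyhidi-type decorrelation estimates) works at the level of $p$-th powers: one freezes $|f|^p$ on cells of side $\theta^{-1}$, obtains $\bigl|\|fg(\theta\cdot)\|_{L^p}^p-\|f\|_{L^p}^p\|g\|_{L^p}^p\bigr|\lesssim\theta^{-1}\|f\|_{C^1}^p\|g\|_{L^p}^p$, and then extracts the norm-level statement via $|a-b|\leq|a^p-b^p|^{1/p}$, which is exactly where the factor $\theta^{-1/p}$ originates. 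You do something genuinely different and slightly sharper: by freezing $f$ itself (not $|f|^p$) and observing that the piecewise-constant surrogate $\bar f$ decorrelates \emph{exactly}, $\|\bar f\,g(\theta\cdot)\|_{L^p}=\|\bar f\|_{L^p}\|g\|_{L^p}$, you run the entire error analysis at the level of norms rather than $p$-th powers, avoid the lossy root-taking step, and obtain the rate $\theta^{-1}$, which implies the stated $\theta^{-1/p}$ since $\theta\geq1$. All individual steps check out: the change of variables $y=\theta x$ maps each cell onto a full period precisely because $\theta\in\mathbb{N}$, the invariance $\|g(\theta\cdot)\|_{L^p}=\|g\|_{L^p}$ holds for the same reason, the two triangle inequalities are organized correctly, and the $p=\infty$ endpoint is indeed trivial. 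One convention you use silently but correctly: your exact identity (with $|Q_i|=\theta^{-2}$ rather than $|\mathbb{T}^2|\theta^{-2}$) requires the torus to carry unit (normalized) measure. This is not a gap in your argument, since the lemma as stated is itself false without that normalization --- test $f\equiv1$ on a torus of measure $\neq1$ --- so the normalized convention is forced by the statement; it would merely be worth one sentence making it explicit, and noting, as you do at the end, that integrality of $\theta$ is what makes the frozen-coefficient decorrelation exact rather than approximate.
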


The estimates of the velocity and magnetic perturbations are summarized in the following proposition.
\begin{Proposition}\label{prop4.4}
Let $s\geq0$, and $1\leq r\leq\infty$, for any integer $0\leq N\leq4$, the following estimates holds
\begin{align}
&\|(w_{q+1}^{(p)},d_{q+1}^{(p)})\|_{L^2_x}\lesssim\delta_{q+1}^{\frac{1}{2}},\label{4.28}\\
&\|(w_{q+1}^{(c)},d_{q+1}^{(c)})\|_{L^2_x}\lesssim\lambda_{q+1}^{-1},\label{4.29}\\
&\|(w_{q+1}^{(p)},d_{q+1}^{(p)})\|_{W_x^{N,r}}\lesssim l^{-8N}\lambda_{q+1}^{N(1+\varepsilon)+\frac{1}{2}-\frac{1}{r}},\label{4.30'}\\
&\|(w_{q+1}^{(c)},d_{q+1}^{(c)})\|_{C^N_{t,x}}\lesssim\lambda_{q+1}^{N(1+\varepsilon)-\frac{1}{2}},\label{4.31''}\\
&\|(w_{q+1},d_{q+1}\|_{C^N_{t,x}}\lesssim\lambda_{q+1}^{\frac{3}{2}N+1},\label{4.30}\\
&\|(w_{q+1},d_{q+1})\|_{W_x^{s,r}}\lesssim l^{-8s}\lambda_{q+1}^{s(1+\varepsilon)+\frac{1}{2}-\frac{1}{r}}.\label{4.31'}
\end{align}
\end{Proposition}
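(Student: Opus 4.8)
The plan is to reduce each bound to the building-block estimates of Lemma~\ref{lem3.2} together with the coefficient bounds of Lemmas~\ref{lem4.1}--\ref{lem4.2}, distribute derivatives by the Leibniz rule, and then absorb the mollification losses into the frequency gain $\mu\sigma=\lambda_{q+1}^{1+\varepsilon}$. The mechanism for the latter is a single inequality: since $\mu=\lambda_{q+1}$, $\sigma=\lambda_{q+1}^\varepsilon$ and $l=\lambda_q^{-20}$, any fixed power $l^{-C}=\lambda_q^{20C}$ satisfies $l^{-C}\le\lambda_{q+1}^\varepsilon=\lambda_q^{b\varepsilon}$ whenever $20C\le b\varepsilon$, which holds for the relevant $C$ (in the range $0\le N\le4$) because $b>1000/\varepsilon$ by \eqref{1.10}--\eqref{1.11}. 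This is what converts the generous product bounds below into the clean powers of $\lambda_{q+1}$ appearing in \eqref{4.29}, \eqref{4.31''}, \eqref{4.30} and \eqref{4.31'}.

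First I would establish the $L^2$ bounds. With $W_{(k)}=\phi_{(k)}k$, $\fint_{\mathbb{T}^2}\phi_{(k)}^2=1$, and $\phi_{(k)}$ oscillating at frequency $\mu\sigma$ along $\tilde{A_k}$, the decorrelation Lemma~\ref{lem4.3} gives $\|a_{(k)}\phi_{(k)}\|_{L^2}\approx\|a_{(k)}\|_{L^2}\|\phi_{(k)}\|_{L^2}$ up to an error $\lesssim\sigma^{-1/2}\|a_{(k)}\|_{C^1}$. Expanding $\|w_{q+1}^{(p)}\|_{L^2}^2=\sum_k\int a_{(k)}^2|W_{(k)}|^2+\sum_{k\ne k'}\int a_{(k)}a_{(k')}\,W_{(k)}\cdot W_{(k')}$, the diagonal sum is $\lesssim\sum_k\|a_{(k)}\|_{L^2}^2\lesssim\delta_{q+1}$ by \eqref{4.6}, \eqref{4.18} and the finiteness of $\Lambda_\alpha$, while each off-diagonal term is $\lesssim\|a_{(k)}\|_{C_{t,x}}\|a_{(k')}\|_{C_{t,x}}\|W_{(k)}\otimes W_{(k')}\|_{L^1}\lesssim l^{-2}\mu^{-1}=\lambda_q^{40-b}$ by \eqref{3.16}, and vanishes identically when $\tilde{A_k}=\tilde{A_{k'}}$ with $k\ne k'$ thanks to the disjoint-support property \eqref{3.8}. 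Since $\lambda_q^{40-b}\ll\delta_{q+1}$, this yields \eqref{4.28}. The corrector bound \eqref{4.29} is cruder: by \eqref{4.9}, \eqref{3.15} and \eqref{4.7}, \eqref{4.19} one has $\|w_{q+1}^{(c)}\|_{L^2}\lesssim\sum_k\|a_{(k)}\|_{C^1_{t,x}}\|\Omega_{(k)}\|_{L^2}\lesssim l^{-8}(\mu\sigma)^{-1}$, and absorbing $l^{-8}\le\lambda_{q+1}^\varepsilon$ gives $\lambda_{q+1}^{-1}$.

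Next I would treat the higher norms. For the principal parts \eqref{4.30'} I distribute $\nabla^N$ over $a_{(k)}W_{(k)}$; by \eqref{4.19} and \eqref{3.14} each Leibniz term is $\lesssim l^{-8j}(\mu\sigma)^{N-j}\mu^{1/2-1/r}$, which is dominated by the product $l^{-8N}(\mu\sigma)^N\mu^{1/2-1/r}$ because $l^{-8},\mu\sigma\ge1$, giving \eqref{4.30'}. The corrector \eqref{4.31''} exploits the extra $(\mu\sigma)^{-1}$ in \eqref{3.15}; as $\Omega_{(k)}$ is time-independent, temporal derivatives fall only on $a_{(k)}$, and the dominant term is $\lesssim l^{-C}(\mu\sigma)^{N-1}\mu^{1/2}=l^{-C}\lambda_{q+1}^{(N-1)(1+\varepsilon)+1/2}$, which after absorbing $l^{-C}\le\lambda_{q+1}^\varepsilon$ is $\le\lambda_{q+1}^{N(1+\varepsilon)-1/2}$. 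Combining the principal-part bound at $r=\infty$ with this corrector bound and once more absorbing the $l$-losses gives \eqref{4.30}, where the exponent $\tfrac32N+1$ leaves slack because $N(1+\varepsilon)+\tfrac12\le\tfrac32N+1$ for $\varepsilon\le\tfrac12$; finally \eqref{4.31'} follows by combining \eqref{4.30'} with the corrector estimate (which is lower order by its $(\mu\sigma)^{-1}$ gain) and interpolating in $s$ between consecutive integers.

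The main obstacle is the sharp $L^2$ bound \eqref{4.28}. Unlike every other estimate it cannot tolerate the loss inherent in summing $L^\infty\times L^1$ products, so it genuinely needs the decorrelation Lemma~\ref{lem4.3} to extract $\|a_{(k)}\|_{L^2}$ with the correct constant, and it needs the off-diagonal interactions to be negligible. The delicate point is exactly this interaction term: for the $2\tfrac12$D Mikado flows the supports of $W_{(k)}$ and $W_{(k')}$ are thin rectangles that need not be disjoint, so one must invoke the shift construction \eqref{3.8} in the coincident-direction case $\tilde{A_k}=\tilde{A_{k'}}$ and the small-intersection estimate \eqref{3.16} (Figure~\ref{fig1}) in the transversal case, exactly as arranged in Lemma~\ref{lem3.2}. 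Everything else is bookkeeping governed by the inequality $20C\le b\varepsilon$.
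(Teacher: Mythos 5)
Your proof is correct and, up to one local detour, follows the same route as the paper: the decorrelation Lemma \ref{lem4.3} for the principal $L^2$ bound, crude product/Leibniz bounds built from Lemma \ref{lem3.2} and Lemmas \ref{lem4.1}--\ref{lem4.2} for all the higher norms, absorption of the mollification losses $l^{-C}=\lambda_q^{20C}$ into $\lambda_{q+1}^{\varepsilon}=\lambda_q^{b\varepsilon}$ (your criterion $20C\le b\varepsilon$ is exactly the mechanism the paper uses implicitly, e.g.\ $l^{-8}\le\sigma$ in \eqref{4.29}), and interpolation in $s$ for \eqref{4.31'}. The one place you genuinely diverge is \eqref{4.28}: the paper never expands the square. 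It applies Lemma \ref{lem4.3} to each individual summand $a_{(k)}\phi_{(k)}$ with $\theta=\sigma$ and then sums over the finitely many $k\in\Lambda_1\cup\Lambda_2$ by the triangle inequality, so no off-diagonal analysis is needed at this stage --- since \eqref{4.28} is only claimed up to an implicit constant, there is no cancellation between different $k$ to preserve. You instead expand $\|w^{(p)}_{q+1}\|_{L^2}^2$ into diagonal and interaction terms and control the latter by the disjoint-support property \eqref{3.8} in the coincident case $\tilde{A_k}=\tilde{A_{k'}}$ and by \eqref{3.16} in the transversal case; your bookkeeping is right (the cross terms are $\lesssim l^{-2}\mu^{-1}=\lambda_q^{40-b}\ll\delta_{q+1}$ since $2\beta b\ll b-40$), but this duplicates work that the paper defers to the interaction oscillation errors $\mathring{R}^B_{osc,far}$ and $\mathring{R}^u_{osc,far}$ in Section \ref{sec5.2}, which is where \eqref{3.8} and \eqref{3.16} are actually needed. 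What your version buys is robustness: it would survive in a scheme where the $L^2$ norm of the perturbation must be computed with a sharp constant (e.g.\ prescribing the energy), which this paper does not require; what the paper's version buys is brevity. Your remaining steps --- the $r=\infty$ principal bound combined with the corrector's extra $(\mu\sigma)^{-1}$ gain for \eqref{4.31''} and \eqref{4.30}, with the slack $N(1+2\varepsilon)+\tfrac12\le\tfrac32N+1$ --- coincide with the paper's proof.
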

\begin{proof}
We apply Lemma \ref{lem4.3} with $f=a_{(k)}$, $g=\phi_{(k)}$ and $\theta=\sigma=\lambda_{q+1}^\varepsilon$. By estimate \eqref{3.14} and Lemma \ref{4.1} and \ref{4.2}, we infer
\begin{align*}
\|(w_{q+1}^{(p)},d_{q+1}^{(p)})\|_{L^2_x}&\lesssim\sum_{k\in \Lambda_1\cup\Lambda_2}\|a_{(k)}\|_{L^2_x}\|\phi_{(k)}\|_{L^2_x}+\lambda_{q+1}^{-\frac{1}
{2}\varepsilon}\|a_{(k)}\|_{C^1_x}\|\phi_{(k)}\|_{L^2_x}\\
&\lesssim \delta_{q+1}^{\frac{1}{2}}+l^{-8}\lambda_{q+1}^{-\frac{1}{2}}\lesssim \delta_{q+1}^{\frac{1}{2}},
\end{align*}
where the last inequality is obtained by \eqref{1.9} and \eqref{1.10}. Now, we prove \eqref{4.29}. By using the definitions \eqref{4.9} and \eqref{4.21}, inequality \eqref{3.15}, Lemmas \ref{lem4.1} and \ref{lem4.2}, we have
\begin{align*}
\|(w_{q+1}^{(c)},d_{q+1}^{(c)})\|_{L^2_x}\lesssim\sum_{k\in \Lambda_1\cup\Lambda_2}\|\nabla a_{(k)}\|_{C_x}\|\Omega_{(k)}\|_{L^2_x}
\lesssim l^{-8}(\mu\sigma)^{-1}\lesssim \lambda_{q+1}^{-1}.
\end{align*}
Next, we justify inequality \eqref{4.30'}. Utilizing the definitions \eqref{4.8} and \eqref{4.20}, inequality \eqref{3.14}, Lemma \ref{lem4.1} and \ref{lem4.2}, one has
\begin{align}\label{4.31}
\|(w_{q+1}^{(p)},d_{q+1}^{(p)})\|_{W_x^{N,r}}&\lesssim\sum_{k\in \Lambda_1\cup\Lambda_2}\|a_{(k)}\|_{C^N_{t,x}}\|W_{(k)}\|_{W^{N,r}_x}\notag\\
&\lesssim l^{-8N}(\mu\sigma)^{N}\mu^{\frac{1}{2}-\frac{1}{r}}\lesssim l^{-8N}\lambda_{q+1}^{N(1+\varepsilon)}\lambda_{q+1}^{\frac{1}{2}-\frac{1}{r}},
\end{align}
where the last inequality is obtained by \eqref{1.9}--\eqref{1.11}.
Similarly, by the definitions \eqref{4.9} and \eqref{4.21}, inequality \eqref{3.15}, Lemma \ref{lem4.1} and \ref{lem4.2}, we infer that
\begin{align}\label{4.32}
\|(w_{q+1}^{(c)},d_{q+1}^{(c)})\|_{C^N_{t,x}}&\lesssim\sum_{k\in \Lambda_1\cup\Lambda_2}\|\nabla a_{(k)}\|_{C^N_{t,x}}\|\Omega_{(k)}\|_{C^N_{x}}\notag\\
&\lesssim l^{-8N}(\mu\sigma)^{N-1}\mu^{\frac{1}{2}}\lesssim \lambda_{q+1}^{-\frac{1}{2}+N(\varepsilon+1)},
\end{align}
then \eqref{4.31''} is verified. Recalling the definitions of $w_{q+1}$ and $d_{q+1}$ in \eqref{4.10} and \eqref{4.23}, respectively, estimates \eqref{4.31} and \eqref{4.32} imply that \eqref{4.30}. By estimates \eqref{4.31} and \eqref{4.32}, we have
\begin{align*}
\|w_{q+1}\|_{W^{N,r}_x}&\lesssim\|w^{(p)}_{q+1}\|_{W^{N,r}_x}+\|w^{(c)}_{q+1}\|_{W^{N,r}_x}\\
&\lesssim\|w^{(p)}_{q+1}\|_{W^{N,r}_x}+\|w^{(c)}_{q+1}\|_{C^{N}_{t,x}}\\
&\lesssim l^{-8N}(\mu\sigma)^{N}\mu^{\frac{1}{2}-\frac{1}{r}}+l^{-8(N+1)}(\mu\sigma)^{N-1}\mu^{\frac{1}{2}}\\
&\lesssim l^{-8N}\lambda_{q+1}^{N(1+\varepsilon)}\lambda_{q+1}^{\frac{1}{2}-\frac{1}{r}}.
\end{align*}
Similarly, one has $\|d_{q+1}\|_{W^{N,r}_x}\lesssim l^{-4N}\lambda_{q+1}^{N(1+\varepsilon)}\lambda_{q+1}^{\frac{1}{2}-\frac{1}{r}}$, then \eqref{4.31'} is obtained by the interpolation inequality.
\end{proof}

At the end of this section, we justify the inductive estimates \eqref{1.12} and \eqref{1.15} at level $q+1$. By the definition of $u_{q+1}$ in \eqref{4.24}, standard mollification estimates and inequalities \eqref{1.12}, \eqref{4.28}, \eqref{4.29} and \eqref{4.30}, we have
\begin{align}\label{4.33}
\|u_{q+1}\|_{C^1_{t,x}}\lesssim\|u_{l}\|_{C^1_{t,x}}+\|w_{q+1}\|_{C^1_{t,x}}\lesssim\|u_{q}\|_{C^1_{t,x}}+\lambda_{q+1}^{\frac{5}{2}}\lesssim\lambda_{q+1}^4,
\end{align}
\begin{align}\label{4.34}
\|u_{q+1}-u_q\|_{L^2_x}&\leq\|u_{q+1}-u_{l}\|_{L^2_x}+\|u_{l}-u_q\|_{L^2_x}\notag\\
&\lesssim\|w_{q+1}\|_{L^2_x}+\|u_{l}-u_q\|_{C_{t,x}}\notag\\
&\lesssim\delta_{q+1}^{\frac{1}{2}}+l\|u_q\|_{C^1_{t,x}}\lesssim\delta_{q+1}^{\frac{1}{2}}+\lambda_q^{-13}\lesssim\delta_{q+1}^{\frac{1}{2}},
\end{align}
where the last inequality is given by the fact that $\beta\ll\frac{1}{b}$, which is deduced from \eqref{1.9} and \eqref{1.10}. A same argument implies that
\begin{align}
&\|B_{q+1}\|_{C^1_{t,x}}\lesssim\lambda_{q+1}^4,\label{4.35}\\
&\|u_{q+1}-u_q\|_{L^2_x}\lesssim\delta_{q+1}^{\frac{1}{2}}.\label{4.36}
\end{align}
Thus inductive estimates \eqref{1.12} and \eqref{1.15} are verified, and \eqref{1.16} is obtained by \eqref{4.31'} with $r=1$ and $s=0$.

%%%%%%%%%%%%%%%%%%%%%%%%%%%%%%%%%%%%%%%%%%%%%%%%%%%%%%%%%%%%%%%%%%%%%%%%%%%%%%%%%%%%%%%%%%%%%%%%%%%%%%%%%%%%%%%%%%%%%%%%%%%%%%%%%%%%%%%%%%%%%%%%%%%%%%%%%%%%%%%%%%%%%%%%%%%%%%%%%%%%%%
\section{Reynolds and magnetic stresses}\label{sec5}
In this section, we will complete the proof of Theorem \ref{main iteration}. The inverses of divergence and curl operators are critical for the proof hereinbelow. The inverse-divergence operator $\mathcal{R}$ is defined by
$$(\mathcal{R}u)^{k\ell}=(\partial_k\Delta^{-1}u^\ell+\partial_\ell\Delta^{-1}u^k)-\frac{1}{2}(\delta_{k\ell}+\partial_k\partial_\ell\Delta^{-1}){\rm div}\Delta^{-1}u,$$
for $\int_{\mathbb{T}^2}udx=0$. The matrix $\mathcal{R}u$ is symmetric and traceless and one has ${\rm div}(\mathcal{R}u)=u$. We abuse the notation $\mathcal{R}u:=\mathcal{R}(u-\int_{\mathbb{T}^2}udx)$ for vector field $u$ with $\int_{\mathbb{T}^2}udx\neq0$. Moreover, $|\nabla|\mathcal{R}$ is Calderon--Zygmund operator, thus it is bounded in $L^p$ for $1<p<\infty$. See \cite{bcv} for more details.

The inverse of curl is defined by
$${\rm curl}^{-1}f=(-\Delta)^{-1}\nabla\times f \quad {\rm for}\quad {\rm div}f=0,$$
then one has $\nabla\times({\rm curl}^{-1})f=f$ and ${\rm curl}^{-1}(\nabla\times f)=f$ while ${\rm div}f=0$.
\subsection{Decomposition of Reynolds and magnetic stresses}
Firstly, we focus on the decomposition of magnetic stress. We hope that the new pair $(u_{q+1},B_{q+1},\mathring{R}_{q+1}^u,\mathring{R}_{q+1}^B)$ satisfies \eqref{Hall-MHDapp} with $q+1$ replacing $q$, thus by the definitions of $u_{q+1}$, $B_{q+1}$, $w_{q+1}$ and $d_{q+1}$ in \eqref{4.11}, \eqref{4.12}, \eqref{4.23} and \eqref{4.24}, and equation \eqref{Hall-MHDmollify}, we can deduce  the equation for the magnetic stress:
\begin{align}\label{5.1}
&\nabla\times{\rm div}\mathring{R}_{q+1}^B\notag\\
&=\partial_td_{q+1}+\nu_2(-\Delta)^{\alpha_2} d_{q+1}+{\rm div}(B_{l}\otimes w_{q+1}+d_{q+1}\otimes u_{l}\notag\\
&\quad\underbrace{\quad\quad-u_{l}\otimes d_{q+1}-w_{q+1}\otimes B_{l})+\nabla\times{\rm div}(B_{l}\otimes d_{q+1}+d_{q+1}\otimes B_{l})}_{\nabla\times{\rm div}\mathring{R}_{lin}^B}\notag\\
&\quad+\nabla\times{\rm div}(d^{(p)}_{q+1}\otimes d^{(c)}_{q+1}+d^{(c)}_{q+1}\otimes d_{q+1})\notag\\
&\quad\underbrace{\quad\quad+{\rm div}(d^{(p)}_{q+1}\otimes w^{(c)}_{q+1}+d^{(c)}_{q+1}\otimes w_{q+1}-w_{q+1}\otimes d^{(c)}_{q+1}-w^{(c)}_{q+1}\otimes d^{(p)}_{q+1})}_{\nabla\times{\rm div}\mathring{R}_{cor}^B}\notag\\
&\quad\underbrace{+\nabla\times{\rm div}(d^{(p)}_{q+1}\otimes d^{(p)}_{q+1}+\mathring{R}_{l}^B)+{\rm div}(d^{(p)}_{q+1}\otimes w^{(p)}_{q+1}-w^{(p)}_{q+1}\otimes d^{(p)}_{q+1})}_{\nabla\times{\rm div}\mathring{R}_{osc}^B}\notag\\
&\quad\underbrace{+\nabla\times{\rm div}(\mathring{R}_{com,1}^B)+{\rm div}\mathring{R}_{com,2}^B}_{\nabla\times{\rm div}\mathring{R}_{com}^B}.
\end{align}
Thus, the magnetic stress is defined as
\begin{align}\label{5.2'}
\mathring{R}_{q+1}^B=\mathring{R}_{lin}^B+\mathring{R}_{cor}^B+\mathring{R}_{osc}^B+\mathring{R}_{com}^B,
\end{align}
where the linear error
\begin{align}\label{5.2}
\mathring{R}_{lin}^B&=\mathcal{R}{\rm curl}^{-1}(\partial_td_{q+1}+\nu_2(-\Delta)^{\alpha_2} d_{q+1}+{\rm div}(B_{l}\otimes w_{q+1}+d_{q+1}\otimes u_{l}\notag\\
&\quad-u_{l}\otimes d_{q+1}-w_{q+1}\otimes B_{l}))+\mathcal{R}{\rm div}(B_{l}\otimes d_{q+1}+d_{q+1}\otimes B_{l}),
\end{align}
the corrector error
\begin{align}\label{5.3}
\mathring{R}_{cor}^B&=\mathcal{R}{\rm curl}^{-1}{\rm div}(d^{(p)}_{q+1}\otimes w^{(c)}_{q+1}+d^{(c)}_{q+1}\otimes w_{q+1}-w_{q+1}\otimes d^{(c)}_{q+1}-w^{(c)}_{q+1}\otimes d^{(p)}_{q+1})\notag\\
&\quad+\mathcal{R}{\rm div}(d^{(p)}_{q+1}\otimes d^{(c)}_{q+1}+d^{(c)}_{q+1}\otimes d_{q+1}),
\end{align}
the commutator error
\begin{align}\label{5.4}
\mathring{R}_{com}^B=\mathring{R}_{com,1}^B+\mathcal{R}{\rm curl}^{-1}{\rm div}\mathring{R}_{com,2}^B,
\end{align}
and the oscillation error
\begin{align}\label{5.5}
\mathring{R}_{osc}^B&=\sum_{k\in \Lambda_2}\mathcal{R}\mathbb{P}_{\neq0}\left(\nabla(a_{(k)}^2)\mathbb{P}_{\neq0}(W_{(k)}\otimes W_{(k)})\right)+\sum_{k\neq k'\in \Lambda_2}\mathcal{R}{\rm div}\left(a_{(k)}a_{(k')}W_{(k)}\otimes W_{(k')}\right))\notag\\
&\quad+\sum_{k\in \Lambda_2, k'\in \Lambda_1}\mathcal{R}{\rm curl}^{-1}{\rm div}\left(a_{(k)}a_{(k')}(W_{(k)}\otimes W_{(k')}-W_{(k')}\otimes W_{(k)})\right).
\end{align}
The oscillation error is obtained by \eqref{4.25} and Lemma \ref{lem3.2}, while $\mathring{R}_{com,1}$ and $\mathring{R}_{com,2}$ are defined in \eqref{2.4} and \eqref{2.5}, respectively. Noting that ${\rm div}{\rm div}S=0$ for skew-symmetric matrix $S$, the operator ${\rm curl}^{-1}$ in the definitions of $\mathring{R}_{lin}^B$, $\mathring{R}_{cor}^B$, $\mathring{R}_{osc}^B$ and $\mathring{R}_{com,2}^B$ makes sense.

Similar to \eqref{5.1}, we deduce the equation of the Reynolds stress:
\begin{align}\label{5.6}
&{\rm div}\mathring{R}_{q+1}^u-\nabla p_{q+1}+\nabla p_l\notag\\
&=\underbrace{\partial_tw_{q+1}+\nu_1(-\Delta)^{\alpha_1} w_{q+1}+{\rm div}(u_{l}\otimes w_{q+1}+w_{q+1}\otimes u_{l}-B_{l}\otimes d_{q+1}-d_{q+1}\otimes B_{l})}_{{\rm div}\mathring{R}_{lin}^u+\nabla p_{lin}}\notag\\
&\quad\underbrace{+{\rm div}(w^{(p)}_{q+1}\otimes w^{(c)}_{q+1}+w^{(c)}_{q+1}\otimes w_{q+1}-d_{q+1}^{(p)}\otimes d^{(c)}_{q+1}-d^{(c)}_{q+1}\otimes d_{q+1})}_{{\rm div}\mathring{R}_{cor}^u+\nabla p_{cor}}\notag\\
&\quad\underbrace{+{\rm div}(w^{(p)}_{q+1}\otimes w^{(p)}_{q+1}-d^{(p)}_{q+1}\otimes d^{(p)}_{q+1}+\mathring{R}_{l}^u)}_{{\rm div}\mathring{R}_{osc}^u+\nabla p_{osc}}\notag\\
&\quad+{\rm div}\mathring{R}_{com}^u,
\end{align}
where the commutator error $\mathring{R}_{com}^u$ is defined in \eqref{2.3}.
Then the Reynolds stress $\mathring{R}_{q+1}^u$ is defined by
\begin{align}\label{5.7}
\mathring{R}_{q+1}^u=\mathring{R}_{lin}^u+\mathring{R}_{cor}^u+\mathring{R}_{osc}^u+\mathring{R}_{com'}^u,
\end{align}
where the linear error
\begin{align}\label{5.8}
\mathring{R}_{lin}^u=&\mathcal{R}\left(\partial_tw_{q+1}+\nu_1(-\Delta)^{\alpha_1} w_{q+1}\right)\notag\\
&+\mathcal{R}\mathbb{P}_{H}{\rm div}(u_{l}\otimes w_{q+1}+w_{q+1}\otimes u_{l}-B_{l}\otimes d_{q+1}-d_{q+1}\otimes B_{l}),
\end{align}
the corrector error
\begin{align}\label{5.9}
\mathring{R}_{cor}^u=\mathcal{R}\mathbb{P}_{H}{\rm div}\left(w^{(p)}_{q+1}\otimes w^{(c)}_{q+1}+w^{(c)}_{q+1}\otimes w_{q+1}-d_{q+1}^{(p)}\otimes d^{(c)}_{q+1}-d^{(c)}_{q+1}\otimes d_{q+1}\right),
\end{align}
the oscillation error
\begin{align}\label{5.10}
\mathring{R}_{osc}^u=&\left(\sum_{k\in \Lambda_1}-\sum_{k\in \Lambda_2}\right)\mathcal{R}\mathbb{P}_H\mathbb{P}_{\neq0}\left(\nabla(a_{(k)}^2)\mathbb{P}_{\neq0}(W_{(k)}\otimes W_{(k)})\right)\notag\\
&+\left(\sum_{k\neq k'\in \Lambda_1}-\sum_{k\neq k'\in \Lambda_2}\right)\mathcal{R}\mathbb{P}_H{\rm div}\left(a_{(k)}a_{(k')}(W_{(k)}\otimes W_{(k')})\right),
\end{align}
and the commutator error
\begin{align}\label{5.11}
\mathring{R}_{com'}^u=\mathcal{R}\mathbb{P}_H{\rm div}\mathring{R}_{com}^u.
\end{align}
The oscillation error $\mathring{R}_{osc}^u$ is obtained by \eqref{4.26} and Lemma \ref{lem3.2}.

In view of ${\rm div}\mathcal{R}={\rm Id}$, we have
\begin{align}\label{5.12}
{\rm div}\mathring{R}_{q+1}^u=\mathbb{P}_H\left(\partial_tu_{q+1}+\nu_1(-\Delta)^{\alpha_1} u_{q+1}+{\rm div}(u_{q+1}\otimes u_{q+1}-B_{q+1}\otimes B_{q+1})\right),
\end{align}
which implies that $\mathring{R}_{q+1}^u$ satisfies the velocity equation in \eqref{Hall-MHDapp}. Moreover, one has
\begin{align}\label{5.13}
\mathring{R}_{q+1}^u=\mathcal{R}\mathbb{P}_H{\rm div}\mathring{R}_{q+1}^u.
\end{align}

\subsection{Estimates of Reynolds and magnetic stresses}\label{sec5.2}

The purpose of this subsection is justifying that the Reynolds and magnetic stresses $(\mathring{R}_{q+1}^u,\mathring{R}_{q+1}^B)$ satisfies the inductive estimate \eqref{1.14} at level $q+1$.

\underline{Linear errors.} Firstly, we estimate the linear errors  $\mathring{R}_{lin}^B$ and $\mathring{R}_{lin}^u$ defined in \eqref{5.2} and \eqref{5.8}, respectively. For $1<p<2$ with
\begin{align}\label{5.14'}
1-\frac{1}{p}<\frac{1}{8}\varepsilon,
\end{align}
we have
\begin{align}\label{5.14}
\|\mathring{R}_{lin}^B\|_{L^p}&\lesssim\|\mathcal{R}{\rm curl}^{-1}\partial_td_{q+1}\|_{L^p}+\|\mathcal{R}{\rm curl}^{-1}(-\Delta)^{\alpha_2} d_{q+1}\|_{L^p}\notag\\
&\quad+\|\mathcal{R}{\rm curl}^{-1}{\rm div}(B_{l}\otimes w_{q+1}+d_{q+1}\otimes u_{l}-u_{l}\otimes d_{q+1}-w_{q+1}\otimes B_{l})\|_{L^p}\notag\\
&\quad+\|\mathcal{R}{\rm div}(B_{l}\otimes d_{q+1}+d_{q+1}\otimes B_{l})\|_{L^p}\notag\\
&=:I_{lin,1}+I_{lin,2}+I_{lin,3}+I_{lin,4},
\end{align}
and
\begin{align}\label{5.15}
\|\mathring{R}_{lin}^u\|_{L^p}&\lesssim\|\mathcal{R}\partial_tw_{q+1}\|_{L^p}+\|\mathcal{R}(-\Delta)^{\alpha_1} w_{q+1}\|_{L^p}\notag\\
&\quad+\|\mathcal{R}\mathbb{P}_H{\rm div}(u_{l}\otimes w_{q+1}+w_{q+1}\otimes u_{l}-B_{l}\otimes d_{q+1}-d_{q+1}\otimes B_{l})\|_{L^p}\notag\\
&=:J_{lin,1}+J_{lin,2}+J_{lin,3}.
\end{align}
Inserting \eqref{4.10} and \eqref{4.11} into $I_{lin,1}$, by estimates \eqref{3.15} and \eqref{4.7}, one has
\begin{align}\label{5.16}
I_{lin,1}&=\|\mathcal{R}{\rm curl}^{-1}\partial_t{\rm div}\sum_{k\in \Lambda_2}a_{(k)}\Omega_{(k)}\|_{L^p}\notag\\
&\lesssim\sum_{k\in \Lambda_2}\||\nabla|^{-1}\mathbb{P}_{\neq0}\left((\partial_t a_{(k)})\Omega_{(k)}\right)\|_{L^p}\notag\\
&\lesssim\sum_{k\in \Lambda_2}\|\partial_t a_{(k)}\|_{C_x}\|\Omega_{(k)}\|_{L^p}\notag\\
&\lesssim l^{-4}\mu^{\frac{1}{2}-\frac{1}{p}}\lesssim\lambda_{q+1}^{-\frac{1}{4}\varepsilon},
\end{align}
where we have used the Calderon-Zygmund inequality and the following estimate (see \cite[Lemma B.1]{bv} for example):
\begin{align}\label{5.17}
\||\nabla|^{-1}\mathbb{P}_{\neq0}\|_{L^p\rightarrow L^p}\lesssim1.
\end{align}
Similarly, by \eqref{4.22}, \eqref{4.23}, the Calderon-Zygmund inequality and estimates \eqref{3.15}, \eqref{4.19}, inequality \eqref{5.14'}, we can infer that
\begin{align}\label{5.18}
J_{lin,1}&=\|\mathcal{R}\partial_t{\rm div}\sum_{k\in \Lambda_1}a_{(k)}\Omega_{(k)}\|_{L^p}\notag\\
&\lesssim\sum_{k\in \Lambda_1}\|\partial_t a_{(k)}\|_{C_x}\|\Omega_{(k)}\|_{L^p}\notag\\
&\lesssim l^{-8}\mu^{\frac{1}{2}-\frac{1}{p}}\lesssim\lambda_{q+1}^{-\frac{1}{4}\varepsilon}.
\end{align}

By estimate \eqref{4.31'}, inequality \eqref{1.11} and \eqref{5.14'}, we have
\begin{align}\label{5.19}
I_{lin,2}&\lesssim\|d_{q+1}\|_{W^{2\alpha_2-2,p}}\notag\\
&\lesssim l^{-8N}\lambda_{q+1}^{(1+\varepsilon)(2\alpha_2-2)+\frac{1}{2}-\frac{1}{p}}\lesssim l^{-8N}\lambda_{q+1}^{(1+\varepsilon)(\frac{1}{2}-8\varepsilon)-\frac{1}{2}+\frac{1}{8}\varepsilon}\lesssim\lambda_{q+1}^{-4\varepsilon},
\end{align}
and
\begin{align}\label{5.20}
J_{lin,2}&\lesssim\|w_{q+1}\|_{W^{2\alpha_1-1,p}}\notag\\
&\lesssim l^{-8N}\lambda_{q+1}^{(1+\varepsilon)(2\alpha_1-1)+\frac{1}{2}-\frac{1}{p}}\lesssim l^{-8N}\lambda_{q+1}^{(1+\varepsilon)(\frac{1}{2}-8\varepsilon)-\frac{1}{2}+\frac{1}{8}\varepsilon}\lesssim\lambda_{q+1}^{-4\varepsilon}.
\end{align}

Applying the Calderon-Zygmund inequality, estimates \eqref{1.12}, \eqref{4.31'}, \eqref{5.17} and inequality \eqref{5.14'}, one has
\begin{align}\label{5.21}
I_{lin,3}+I_{lin,4}&\lesssim\||\nabla|^{-1}\mathbb{P}_{\neq0}\left(B_l\otimes w_{q+1}+d_{q+1}\otimes u_l-u_l\otimes d_{q+1}-w_{q+1}\otimes B_l\right)\|_{L^p}\notag\\
&\quad+\|B_l\|_{L^\infty}\|d_{q+1}\|_{L^p}\notag\\
&\lesssim \|(u_l,B_l)\|_{L^\infty}\|(w_{q+1},d_{q+1})\|_{L^p}\notag\\
&\lesssim \lambda_{q}^4\lambda_{q+1}^{\frac{1}{2}-\frac{1}{p}}\lesssim\lambda_{q+1}^{-\frac{1}{4}\varepsilon},
\end{align}
and similarly,
\begin{align}\label{5.22}
J_{lin,3}\lesssim\|(u_l,B_l)\|_{L^\infty}\|(w_{q+1},d_{q+1})\|_{L^p}\lesssim\lambda_{q+1}^{-\frac{1}{4}\varepsilon}.
\end{align}
Then inserting estimates \eqref{5.16} and \eqref{5.18}--\eqref{5.22} into \eqref{5.14} and \eqref{5.15}, one has
\begin{align}\label{5.23}
\|(\mathring{R}_{lin}^u,\mathring{R}_{lin}^B)\|_{L^p}\lesssim\lambda_{q+1}^{-\frac{1}{4}\varepsilon}.
\end{align}
%%%%%%%%%%%%%%%%%%%%%%%%%%%%%%%%%%%%%%%%%%%%%%%%%%%%%%%%%%%%%%%%%%%%%%%%%%%%%%%%%%%%%%%%%%%%%%%%%%%%%%%%%%%%%%%%%%%%%%%%%%%%%%%%%%%%%%%%%%%%%%%%%%%%%%%%%%%%%%%%%%%%%%%%%%%%%%%%%%%%%%
\underline{Oscillation errors.}
We need the following lemma (see \cite[Lemma 7.4]{lq}) to estimate oscillation errors.
\begin{Lemma}\label{lem5.1}
For any given $1<p<\infty$, $\lambda\in \mathbb{Z}_+$, $a\in C^2(\mathbb{T}^2;\mathbb{R})$ and $f\in L^p(\mathbb{T}^2)$, one has
$$\||\nabla|^{-1}\mathbb{P}_{\neq0}(a\mathbb{P}_{\geq\lambda}f)\|_{L^p}\lesssim \lambda^{-1}\|a\|_{C^2}\|f\|_{L^p}.$$
\end{Lemma}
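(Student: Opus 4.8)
The plan is to split the low-regularity factor $a$ into frequency-low and frequency-high pieces relative to the scale $\lambda$, and to exploit the fact that $|\nabla|^{-1}\mathbb{P}_{\neq0}$ fails to produce a gain $\lambda^{-1}$ only when its argument carries frequencies of order $1$; on $\mathbb{T}^2$ such low output frequencies can arise from the product $a\,\mathbb{P}_{\geq\lambda}f$ only through the high frequencies of $a$, and those are small precisely because $a\in C^2$. Throughout I may assume $\lambda$ is large, since for the finitely many small values the inequality is trivial (all operators involved are $L^p$-bounded and $\lambda^{-1}\gtrsim 1$). Writing $g:=\mathbb{P}_{\geq\lambda}f$ and decomposing $a=\mathbb{P}_{<\lambda/2}a+\mathbb{P}_{\geq\lambda/2}a=:a_{\mathrm{lo}}+a_{\mathrm{hi}}$ with a standard Littlewood--Paley cutoff, I would treat the two pieces of $|\nabla|^{-1}\mathbb{P}_{\neq0}(ag)=|\nabla|^{-1}\mathbb{P}_{\neq0}(a_{\mathrm{lo}}g)+|\nabla|^{-1}\mathbb{P}_{\neq0}(a_{\mathrm{hi}}g)$ separately.

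For the low--high term, the Fourier support of $g$ lies in $\{|\xi|\geq\lambda\}$ while that of $a_{\mathrm{lo}}$ lies in $\{|\xi|<\lambda/2\}$, so by the triangle inequality in frequency the product $a_{\mathrm{lo}}g$ is supported in $\{|\xi|\geq\lambda/2\}$; in particular it has zero mean, $\mathbb{P}_{\neq0}$ acts trivially, and we may insert $\mathbb{P}_{\geq\lambda/2}$. After rescaling $\xi\mapsto\xi/\lambda$ the symbol $|\xi|^{-1}\mathbf{1}_{|\xi|\geq\lambda/2}$ is a Mikhlin--H\"ormander multiplier carrying a gain $\lambda^{-1}$, whence $\||\nabla|^{-1}\mathbb{P}_{\geq\lambda/2}\|_{L^p\to L^p}\lesssim\lambda^{-1}$ for $1<p<\infty$. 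Combining this with the $L^p$-boundedness of the projectors and $\|g\|_{L^p}\lesssim\|f\|_{L^p}$ gives $\||\nabla|^{-1}\mathbb{P}_{\neq0}(a_{\mathrm{lo}}g)\|_{L^p}\lesssim\lambda^{-1}\|a_{\mathrm{lo}}\|_{L^\infty}\|g\|_{L^p}\lesssim\lambda^{-1}\|a\|_{C^2}\|f\|_{L^p}$, which already realizes the sharp order.

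The high--high term is the delicate one, since the product $a_{\mathrm{hi}}g$ of two high-frequency factors may concentrate at frequency $O(1)$, where $|\nabla|^{-1}$ provides no gain at all; here I would spend the regularity of $a$ rather than any frequency localization. Using only the crude cited bound \eqref{5.17}, namely $\||\nabla|^{-1}\mathbb{P}_{\neq0}\|_{L^p\to L^p}\lesssim1$, together with H\"older, one gets $\||\nabla|^{-1}\mathbb{P}_{\neq0}(a_{\mathrm{hi}}g)\|_{L^p}\lesssim\|a_{\mathrm{hi}}\|_{L^\infty}\|g\|_{L^p}$. The decisive gain then comes from Bernstein's inequality: since $a\in C^2$, $\|a_{\mathrm{hi}}\|_{L^\infty}=\|\mathbb{P}_{\geq\lambda/2}a\|_{L^\infty}\lesssim\lambda^{-2}\|a\|_{C^2}$, so this term is controlled by $\lambda^{-2}\|a\|_{C^2}\|f\|_{L^p}$, which is even stronger than needed. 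Adding the two contributions yields the claim.

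The main obstacle is exactly this high--high interaction: the inverse gradient is useless there, and one must instead convert the two derivatives recorded in the $C^2$-norm of $a$ into the algebraic decay $\lambda^{-2}$ of its high-frequency part. Since the low--high term already saturates the estimate at order $\lambda^{-1}$, the hypothesis $a\in C^2$ is comfortably more than the high--high term requires, where one derivative would in fact suffice. The only analytic inputs are the $L^p$-boundedness of the Littlewood--Paley projectors, the Mikhlin-type bound for $|\nabla|^{-1}\mathbb{P}_{\geq\lambda/2}$ with its $\lambda^{-1}$ gain, the cited estimate \eqref{5.17}, and Bernstein's inequality on $\mathbb{T}^2$.
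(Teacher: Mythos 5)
Your proof is correct and is essentially the same argument as the paper's, which offers no proof of Lemma \ref{lem5.1} but cites \cite[Lemma 7.4]{lq}: that proof is exactly your decomposition $a=\mathbb{P}_{<\lambda/2}a+\mathbb{P}_{\geq\lambda/2}a$, with the $\lambda^{-1}$ multiplier gain on the low--high part and the Bernstein-type bound $\|\mathbb{P}_{\geq\lambda/2}a\|_{L^\infty}\lesssim\lambda^{-2}\|a\|_{C^2}$ absorbing the high--high part. The one cosmetic blemish is calling $|\xi|^{-1}\mathbf{1}_{|\xi|\geq\lambda/2}$ a Mikhlin--H\"ormander symbol (a sharp Fourier cutoff is not $L^p$-bounded in two dimensions for $p\neq2$), but since $a_{\mathrm{lo}}g$ is already frequency-supported in $\{|\xi|\geq\lambda/2\}$ you may instead insert a smooth cutoff equal to $1$ there and vanishing for $|\xi|\leq\lambda/4$, giving a genuine Mikhlin symbol of norm $\lesssim\lambda^{-1}$, so the step is repaired with no change to the rest.
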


Recalling the definitions of oscillation errors $\mathring{R}_{osc}^B$ and $\mathring{R}_{osc}^u$ in \eqref{5.5} and \eqref{5.10}, we can decompose
\begin{align}
\mathring{R}_{osc}^B=\mathring{R}_{osc,x}^B+\mathring{R}_{osc,far}^B,\label{5.24}\\
\mathring{R}_{osc}^u=\mathring{R}_{osc,x}^u+\mathring{R}_{osc,far}^u,\label{5.25}
\end{align}
where
\begin{align*}
&\mathring{R}_{osc,x}^B=\sum_{k\in \Lambda_2}\mathcal{R}\mathbb{P}_{\neq0}\left(\nabla(a_{(k)}^2)\mathbb{P}_{\neq0}(W_{(k)}\otimes W_{(k)})\right),\\
&\mathring{R}_{osc,far}^B=\sum_{k\neq k'\in \Lambda_2}\mathcal{R}{\rm div}\left(a_{(k)}a_{(k')}W_{(k)}\otimes W_{(k')}\right)\\
&\qquad\qquad+\sum_{k\in \Lambda_2, k'\in \Lambda_1}\mathcal{R}{\rm curl}^{-1}{\rm div}\left(a_{(k)}a_{(k')}(W_{(k)}\otimes W_{(k')}-W_{(k')}\otimes W_{(k)})\right),\\
&\mathring{R}_{osc,x}^u=\left(\sum_{k\in \Lambda_1}-\sum_{k\in \Lambda_2}\right)\mathcal{R}\mathbb{P}_H\mathbb{P}_{\neq0}\left(\nabla(a_{(k)}^2)\mathbb{P}_{\neq0}(W_{(k)}\otimes W_{(k)})\right),\\
&\mathring{R}_{osc,far}^u=\left(\sum_{k\neq k'\in \Lambda_1}-\sum_{k\neq k'\in \Lambda_2}\right)\mathcal{R}\mathbb{P}_H{\rm div}\left(a_{(k)}a_{(k')}(W_{(k)}\otimes W_{(k')})\right).
\end{align*}

For the terms $\mathring{R}_{osc,x}^B$ and $\mathring{R}_{osc,x}^u$, since $W_{(k)}$ is $(\mathbb{T}/\sigma)^2$-periodic, we get
$$\mathbb{P}_{\neq0}(W_{(k)}\otimes W_{(k)})=\mathbb{P}_{\geq\sigma/2}(W_{(k)}\otimes W_{(k)}).$$
Then, by estimates \eqref{3.14}, \eqref{4.7}, \eqref{5.14'}, and \eqref{1.10}, we apply Lemma \ref{lem5.1} with $a=\nabla(a_{(k)}^2)$ and $f=\phi_{(k)}^2$ to obtain
\begin{align}\label{5.26}
\|\mathring{R}_{osc,x}^B\|_{L^p}&\lesssim\sum_{k\in \Lambda_2}\||\nabla|^{-1}\mathbb{P}_{\neq0}\left(\nabla(a_{(k)}^2)\mathbb{P}_{\geq\sigma/2}(W_{(k)}\otimes W_{(k)})\right)\|_{L^p}\notag\\
&\lesssim\sum_{k\in \Lambda_2}\sigma^{-1}\|\nabla(a_{(k)}^2)\|_{C^2_{t,x}}\|\phi_{(k)}^2\|_{L^p}\notag\\
&\lesssim\sigma^{-1}l^{-13}\mu^{1-\frac{1}{p}}\notag\\
&\lesssim l^{-13}\lambda_{q+1}^{-\varepsilon+\frac{\varepsilon}{8}}\lesssim \lambda_{q+1}^{-\frac{\varepsilon}{2}}.
\end{align}
By a similar argument, one has
\begin{align}\label{5.27}
\|\mathring{R}_{osc,x}^u\|_{L^p}\lesssim\lambda_{q+1}^{-\frac{\varepsilon}{2}}.
\end{align}

For the term $\mathring{R}_{osc,far}^B$, we employ estimates \eqref{3.16} and \eqref{5.17}, the Calderon-Zygmund inequality, Lemma \ref{lem4.1} and Lemma \ref{lem4.2} to obtain
\begin{align}\label{5.28}
\|\mathring{R}_{osc,far}^B\|_{L^p}&\lesssim\sum_{k\neq k'\in \Lambda_2}\|a_{(k)}a_{(k')}\|_{L^\infty}\|W_{(k)}\otimes W_{(k')}\|_{L^p}\notag\\
&\quad+\sum_{k\in \Lambda_2, k'\in \Lambda_1}\||\nabla|^{-1}\mathbb{P}_{\neq0}\left(a_{(k)}a_{(k')}(W_{(k)}\otimes W_{(k')}-W_{(k')}\otimes W_{(k)})\right)\|_{L^p}\notag\\
&\lesssim l^{-2}\mu^{1-\frac{2}{p}}\lesssim l^{-2}\lambda_{q+1}^{\frac{\varepsilon}{8}-\frac{1}{2}}\lesssim\lambda_{q+1}^{-\frac{\varepsilon}{4}},
\end{align}
where the last two inequalities is obtained by \eqref{5.14'}, \eqref{1.9} and \eqref{1.10}.
Similarly, one has
\begin{align}\label{5.29}
\|\mathring{R}_{osc,far}^u\|_{L^p}\lesssim\lambda_{q+1}^{-\frac{\varepsilon}{4}}.
\end{align}
Then, inserting estimates \eqref{5.26}--\eqref{5.29} into \eqref{5.24} and \eqref{5.25}, we have
\begin{align}\label{5.30}
\|(\mathring{R}_{osc}^u,\mathring{R}_{osc}^B)\|_{L^p}\lesssim\lambda_{q+1}^{-\frac{\varepsilon}{4}}.
\end{align}

%%%%%%%%%%%%%%%%%%%%%%%%%%%%%%%%%%%%%%%%%%%%%%%%%%%%%%%%%%%%%%%%%%%%%%%%%%%%%%%%%%%%%%%%%%%%%%%%%%%%%%%%%%%%%%%%%%%%%%%%%%%%%%%%%%%%%%%%%%%%%%%%%%%%%%%%%%%%%%%%%%%%%%%%%%%%%%%%%%%%%%
\underline{Corrector errors.}
Recalling the definitions of the corrector errors $\mathring{R}_{cor}^B$ and $\mathring{R}_{cor}^u$ in \eqref{5.3} and \eqref{5.9}, respectively; by utilizing the Calderon-Zygmund inequality, estimates \eqref{5.17}, \eqref{4.30'}, \eqref{4.31''} and \eqref{4.31'}, we have
\begin{align}\label{5.31}
\|\mathring{R}_{cor}^B\|_{L^p}&\lesssim\||\nabla|^{-1}\mathbb{P}_{\neq0}\left(d_{q+1}^{(p)}\otimes w_{q+1}^{(c)}-w_{q+1}^{(c)}\otimes d_{q+1}^{(p)}+d_{q+1}^{(c)}\otimes w_{q+1}-w_{q+1}\otimes d_{q+1}^{(c)}\right)\|_{L^p}\notag\\
&\quad+\|d_{q+1}^{(p)}\otimes d_{q+1}^{(c)}+d_{q+1}^{(c)}\otimes d_{q+1}\|_{L^p}\notag\\
&\lesssim\|(w_{q+1}^{(c)},d_{q+1}^{(c)})\|_{L^{2p}}(\|d_{q+1}\|_{L^{2p}}+\|d_{q+1}^{(p)}\|_{L^{2p}}+\|w_{q+1}\|_{L^{2p}})\notag\\
&\lesssim \lambda_{q+1}^{-\frac{1}{2}}\lambda_{q+1}^{\frac{1}{2}-\frac{1}{2p}}\lesssim \lambda_{q+1}^{-\frac{\varepsilon}{4}},
\end{align}
where the last inequality is obtained by \eqref{5.14'}. Similarly, we have
\begin{align}\label{5.32}
\|\mathring{R}_{cor}^u\|_{L^p}\lesssim \lambda_{q+1}^{-\frac{\varepsilon}{4}}.
\end{align}
Then from \eqref{5.31} and \eqref{5.32}, we get
\begin{align}\label{5.33}
\|(\mathring{R}_{cor}^u,\mathring{R}_{cor}^B)\|_{L^p}\lesssim \lambda_{q+1}^{-\frac{\varepsilon}{4}}.
\end{align}

%%%%%%%%%%%%%%%%%%%%%%%%%%%%%%%%%%%%%%%%%%%%%%%%%%%%%%%%%%%%%%%%%%%%%%%%%%%%%%%%%%%%%%%%%%%%%%%%%%%%%%%%%%%%%%%%%%%%%%%%%%%%%%%%%%%%%%%%%%%%%%%%%%%%%%%%%%%%%%%%%%%%%%%%%%%%%%%%%%%%%%
\underline{Commutator errors.}
By the Calderon-Zygmund inequality, estimates \eqref{2.10}, \eqref{2.11} and \eqref{5.17}, we infer
\begin{align*}
&\|\mathring{R}_{com}^B\|_{L^p}\lesssim\|\mathring{R}_{com,1}^B\|_{L^p}+\||\nabla|^{-1}\mathbb{P}_{\neq0}\mathring{R}_{com,2}^B\|_{L^p}\lesssim \lambda_{q}^{-6},\\
&\|\mathring{R}_{com'}^u\|_{L^p}\lesssim\|\mathring{R}_{com}^u\|_{L^p}\lesssim \lambda_{q}^{-6}.
\end{align*}
Thus,
\begin{align}\label{5.34}
\|(\mathring{R}_{com'}^u,\mathring{R}_{com}^B)\|_{L^p}\lesssim \lambda_{q}^{-6}.
\end{align}

%%%%%%%%%%%%%%%%%%%%%%%%%%%%%%%%%%%%%%%%%%%%%%%%%%%%%%%%%%%%%%%%%%%%%%%%%%%%%%%%%%%%%%%%%%%%%%%%%%%%%%%%%%%%%%%%%%%%%%%%%%%%%%%%%%%%%%%%%%%%%%%%%%%%%%%%%%%%%%%%%%%%%%%%%%%%%%%%%%%%%%
\subsection{Proof of the main iteration in Theorem \ref{main iteration}}
The inductive estimates \eqref{1.12}, \eqref{1.15} and \eqref{1.16} are justified in Section \ref{sec4.3}. Now we prove estimate \eqref{1.14}. Inserting estimates \eqref{5.23}, \eqref{5.30}, \eqref{5.33} and \eqref{5.34} into \eqref{5.2'} and \eqref{5.7}, we have
\begin{align}\label{5.35}
\|(\mathring{R}_{q+1}^u,\mathring{R}_{q+1}^B)\|_{L^1}\lesssim \|(\mathring{R}_{q+1}^u,\mathring{R}_{q+1}^B)\|_{L^p}\lesssim \lambda_{q+1}^{-\frac{\varepsilon}{4}}+\lambda_{q}^{-6}\leq\delta_{q+2},
\end{align}
where the last inequality is given by the fact that $\frac{\varepsilon}{b}\gg\beta$ and $\beta b^2<\frac{1}{100}$, which are deduced from \eqref{1.9} and \eqref{1.10}.

Next, we prove inductive estimate \eqref{1.13}. By Sobolev's embedding $W^{1,3}(\mathbb{T}^2)\hookrightarrow L^\infty(\mathbb{T}^2)$, equation \eqref{5.13} and estimates \eqref{2.7}, \eqref{4.30}, we find that
\begin{align*}
\|\mathring{R}_{q+1}^u\|_{C_tC^1_x}&\lesssim\|\mathcal{R}\mathbb{P}_{H}{\rm div}\mathring{R}_{q+1}^u\|_{C_tW^{2,3}_x}\\
&\lesssim\|\partial_tu_{q+1}+\nu_1(-\Delta)^{\alpha_1} u_{q+1}+{\rm div}(u_{q+1}\otimes u_{q+1}-B_{q+1}\otimes B_{q+1})\|_{C_tW^{1,3}_x}\\
&\lesssim\|u_{q+1}\|_{C^1_tW_x^{1,3}}+\| u_{q+1}\|_{C_tW_x^{2\alpha_1+1,3}}+\sum_{0\leq N'\leq2}\|B_{q+1}\|_{C^{N'}_{t,x}}\|B_{q+1}\|_{C^{2-N'}_{t,x}}\\
&\quad+\sum_{0\leq N'\leq2}\|u_{q+1}\|_{C^{N'}_{t,x}}\|u_{q+1}\|_{C^{2-N'}_{t,x}}\\
&\lesssim \lambda_{q+1}^5+\lambda_{q+1}^7\leq\lambda_{q+1}^8.
\end{align*}
By the same reason, we have
\begin{align*}
&\|\mathring{R}_{q+1}^B\|_{C_tC^1_x}\\
&\lesssim\|\mathcal{R}{\rm curl}^{-1}(\partial_tB_{q+1}+\nu_2(-\Delta)^{\alpha_2} B_{q+1}+{\rm div}(B_{q+1}\otimes u_{q+1}-u_{q+1}\otimes B_{q+1})\\
&\quad+\nabla\times{\rm div}(B_{q+1}\otimes B_{q+1}))\|_{C_tW^{2,3}_x}\\
&\lesssim\|B_{q+1}\|_{C^1_{t,x}}+\|B_{q+1}\|_{C_tW_x^{2\alpha_2,3}}+\sum_{0\leq N'\leq1}\|u_{q+1}\|_{C^{N'}_{t,x}}\|B_{q+1}\|_{C^{1-N'}_{t,x}}\\
&\quad+\sum_{0\leq N'\leq2}\|B_{q+1}\|_{C^{N'}_{t,x}}\|B_{q+1}\|_{C^{2-N'}_{t,x}}\\
&\lesssim\lambda_{q+1}^8.
\end{align*}
Similarly,
\begin{align*}
\|\partial_t\mathring{R}_{q+1}^u\|_{C_{t,x}}&\lesssim\|\partial^2_tu_{q+1}+\nu_1(-\Delta)^{\alpha_1} \partial_tu_{q+1}+{\rm div}\partial_t(u_{q+1}\otimes u_{q+1}-B_{q+1}\otimes B_{q+1})\|_{C_tL^{3}_x}\\
&\lesssim\|u_{q+1}\|_{C^2_{t,x}}+\|u_{q+1}\|_{C^3_{t,x}}+\|u_{q+1}\otimes u_{q+1}-B_{q+1}\otimes B_{q+1}\|_{C^2_{t,x}}\\
&\lesssim \lambda_{q+1}^8.
\end{align*}
By Sobolev's embedding $H^{2}(\mathbb{T}^2)\hookrightarrow L^\infty(\mathbb{T}^2)$, and estimates \eqref{2.7}, \eqref{4.30}, we obtain
\begin{align*}
&\|\partial_t\mathring{R}_{q+1}^B\|_{C_{t,x}}\\
&\lesssim\|\partial_tB_{q+1}+\nu_2(-\Delta)^{\alpha_2} B_{q+1}+{\rm div}(B_{q+1}\otimes u_{q+1}-u_{q+1}\otimes B_{q+1})\\
&\quad+\nabla\times{\rm div}(B_{q+1}\otimes B_{q+1})\|_{C^1_tL^2_x}\\
&\lesssim\|B_{q+1}\|_{C^2_{t,x}}+\|B_{q+1}\|_{C^1_tW_x^{2\alpha_2,3}}\\
&\quad+\sum_{0\leq N'\leq2}\|u_{q+1}\|_{C^{N'}_{t,x}}\|B_{q+1}\|_{C^{2-N'}_{t,x}}+\sum_{0\leq N'\leq3}\|B_{q+1}\|_{C^{N'}_{t,x}}\|B_{q+1}\|_{C^{3-N'}_{t,x}}\\
&\lesssim\lambda_{q+1}^8.
\end{align*}
Thus \eqref{1.13} is verified.

Now we justify \eqref{1.17}. By the definitions of $\mathring{R}_{q+1}^u$ in \eqref{5.7} and $\mathring{R}_{q+1}^B$ in \eqref{5.2'}, we have
\begin{align}\label{5.36}
{\rm supp}_t(\mathring{R}_{q+1}^u,\mathring{R}_{q+1}^B)\subset\bigcup_{k \in \Lambda_1\cup\Lambda_2}{\rm supp}_ta_{(k)}\cup{\rm supp}_t(u_l,B_l)\cup{\rm supp}_t(\mathring{R}_{q}^u,\mathring{R}_{q}^B),
\end{align}
and by the definitions of $u_{q+1}$ in \eqref{4.24} and $B_{q+1}$ in \eqref{4.12}, one has
\begin{align}\label{5.37}
{\rm supp}_t(u_{q+1},B_{q+1})\subset{\rm supp}_t(u_l,B_l)\cup{\rm supp}_t(w_{q+1},d_{q+1}).
\end{align}
From the definitions and \eqref{2.12} and \eqref{2.13}, we can infer that
\begin{align}\label{5.38}
{\rm supp}_t(w_{q+1},d_{q+1})\subset\bigcup_{k \in \Lambda_1\cup\Lambda_2}{\rm supp}_ta_{(k)}\subset N_{3l}\left({\rm supp}_t(\mathring{R}_{q}^u,\mathring{R}_{q}^B)\right).
\end{align}
Thus, \eqref{5.36}--\eqref{5.38} yields that
\begin{align}\label{5.39}
{\rm supp}_t(u_{q+1},B_{q+1},\mathring{R}_{q+1}^u,\mathring{R}_{q+1}^B)\subset N_{3l}\left({\rm supp}_t(u_q,B_q,\mathring{R}_{q}^u,\mathring{R}_{q}^B)\right),
\end{align}
and since $l\ll\delta_{q+2}^{\frac{1}{2}}$, \eqref{1.17} is verified.

\section{Proof of main results}\label{sec6}
\subsection{Proof of Theorem \ref{main result}}
Let $u_0=\tilde{u}$, $B_0=\tilde{B}$, and
\begin{align}
&\mathring{R}_0^u=\mathcal{R}(\partial_tu_0+\nu_1(-\Delta)^{\alpha_1} u_{0})+u_{0}\mathring{\otimes} u_{0}-B_{0}\mathring{\otimes} B_{0},\label{6.1}\\
&\mathring{R}_0^B=\mathcal{R}{\rm curl}^{-1}\left(\partial_tB_0+\nu_2(-\Delta)^{\alpha_2} B_{0}+{\rm div}(B_{0}\otimes u_{0}-u_{0}\otimes B_{0})\right)+B_{0}\mathring{\otimes} B_{0},\label{6.2}\\
&p_0=\frac{1}{3}(|u_0|^2-|B_0|^2),\label{6.3}
\end{align}
then $(u_0,B_0,\mathring{R}_0^u,\mathring{R}_0^B)$ is a solution of \eqref{Hall-MHDapp} with pressure $p_0$.

Set $\delta_1=\max\{\|\mathring{R}_0^u\|_{L^\infty_tL^1_x},\|\mathring{R}_0^B\|_{L^\infty_tL^1_x}\}$ and $a$, $M$ large enough such that $(u_0,B_0,\mathring{R}_0^u,\mathring{R}_0^B)$ satisfies \eqref{1.12}--\eqref{1.14} at level $q=0$. Therefore, by Theorem \ref{main iteration}, there exists a sequence of solutions $(u_{q+1},B_{q+1},\mathring{R}_{q+1}^u,\mathring{R}_{q+1}^B)$ to \eqref{Hall-MHDapp} with \eqref{1.12}--\eqref{1.14} holds for $q\geq0$. Then by the interpolation inequality, \eqref{1.12} and \eqref{1.15}, for any $\beta'\in (0,\frac{\beta}{4+\beta})$, we have{\small
\begin{align}\label{6.4}
\sum_{q\geq0}\|u_{q+1}-u_{q}\|_{\dot{H}^{\beta'}}\lesssim\sum_{q\geq0}\|u_{q+1}-u_{q}\|_{L^2}^{1-\beta'}\|u_{q+1}-u_{q}\|_{H^1}^{\beta'}
\lesssim\sum_{q\geq0}M^{1-\beta'}\delta_{q+1}^{\frac{1}{2}(1-\beta')}\lambda_{q+1}^{4\beta'}\lesssim1.
\end{align}}
Similarly,
\begin{align*}
\sum_{q\geq0}\|B_{q+1}-B_{q}\|_{\dot{H}^{\beta'}}\lesssim1.
\end{align*}
Thus we define $(u,B)=\lim_{q\rightarrow\infty}(u_q,B_q)$ in $H^{\beta'}$. In view of \eqref{1.14}, we infer that $(u,B)$ is a weak solution to Hall-MHD equation \eqref{Hall-MHD1} in the sense of Definition \ref{def1}.

Next, we prove inequality \eqref{thm1.2-2}. From \eqref{1.16} in Theorem \ref{main iteration}, we have
\begin{align*}
\|u-\tilde{u}\|_{L^1_x}\leq\sum_{q\geq0}\|u_{q+1}-u_{q}\|_{L_x^{1}}\leq\sum_{q\geq0}\delta_{q+2}^{\frac{1}{2}}
\leq\sum_{q\geq0}\lambda_{q+2}^{-\beta}
\leq\sum_{q\geq2}a^{-\beta bq}
=\frac{a^{-2\beta b}}{1-a^{-\beta b}}\leq\varepsilon_*,
\end{align*}
for $a$ large enough (depending on $\varepsilon_*$). By a similar argument, one has $\|B-\tilde{B}\|_{L^1_x}\leq\varepsilon_*$, then \eqref{thm1.2-2} is verified.

Since
$${\rm supp}_t(\mathring{R}^u_0,\mathring{R}^B_0)\subset{\rm supp}_t(\tilde{u},\tilde{B}),$$
by \eqref{1.17}, we conclude that
\begin{align*}
{\rm supp}_t(u,B)&\subset\bigcap_{q\geq0}{\rm supp}_t(u_q,B_q,\mathring{R}^u_q,\mathring{R}^B_q)=N_{\Sigma_{q\geq0}}\left({\rm supp}_t(u_0,B_0,\mathring{R}^u_0,\mathring{R}^B_0)\right)\\
&\subset N_{\varepsilon_*}\left({\rm supp}_t(\tilde{u},\tilde{B})\right),
\end{align*}
then \eqref{thm1.2-3} is verified.

Next we prove \eqref{thm1.2-4}. Assuming that the mean-free periodic vectors $A$ and $A_0$ are the potential fields corresponding to $B$ and $B_0$, respectively, we have
\begin{align}\label{6.10}
|\mathcal{H}_{B,B}-\mathcal{H}_{\tilde{B},\tilde{B}}|&=\left|\int_{\mathbb{T}^2}A\cdot B-A_0\cdot B_0dx\right|\notag\\
&\leq\left|\int_{\mathbb{T}^2}A\cdot( B-B_0)dx\right|+\left|\int_{\mathbb{T}^2}(A-A_0)\cdot B_0 dx\right|\notag\\
&\leq\|A\|_{L^{\infty}}\|B-B_0\|_{L^{1}}+\|B_0\|_{L^{\infty}}\|A-A_0\|_{L^{1}}.
\end{align}
Noting that $B$ is divergence free, by the Biot-Savart law (see \cite{kryz}, for example), we know that $\nabla\times (-\Delta)^{-1}B=A$. By Sobolev's embedding, one has
$$\|A\|_{{L^\infty}(\mathbb{T}^2)}\lesssim\|A\|_{\dot{H}^{1+\beta'}(\mathbb{T}^2)}\lesssim\|B\|_{H^{\beta'}(\mathbb{T}^2)}\lesssim 1.$$
Noting that $A$ and $A_0$ are mean-free, by inequality \eqref{5.17}, \eqref{6.10} can be estimated as
\begin{align*}
|\mathcal{H}_{B,B}-\mathcal{H}_{\tilde{B},\tilde{B}}|\lesssim \|B-B_0\|_{L^{1}}\lesssim\sum_{q\geq0}\|B_{q+1}-B_{q}\|_{L^{1}}\lesssim \sum_{q\geq0} \delta_{q+2}^{\frac{1}{2}}\leq\varepsilon_*.
\end{align*}

%%%%%%%%%%%%%%%%%%%%%%%%%%%%%%%%%%%%%%%%%%%%%%%%%%%%%%%%%%%%%%%%%%%%%%%%%%%%%%%%%%%%%%%%%%%%%%%%%%%%%%%%%%%%%%%%%%%%%%%%%%%%%%%%%%%%%%%%%%%%%%%%%%%%%%%%%%%%%%%%%%%%%%%%%%%%%%%%%%%%%%
\subsection{Proof of Theorem \ref{main theorem}}
For $m\in \mathbb{N}_+$, we define the incompressible, mean-free fields $\tilde{u}_m$ and $\tilde{B}_m$ in $C^\infty([0,1]\times\mathbb{T}^2;\mathbb{R}^3)$ by
\begin{align}
&\tilde{u}_m:=m\psi(t)(\sin x_2,0,0)^T,\label{6.5}\\
&\tilde{B}_m:=m\psi(t)(\sin x_2,\cos x_1,-\sin x_1-\cos x_2)^T,\label{6.6}
\end{align}
where $\psi(t):[0,1]\rightarrow\mathbb{R}$, is any cut-off function supported on $[\frac{1}{4},\frac{3}{4}]$, and $\psi(t)=1$ on $[\frac{1}{2},\frac{5}{8}]$, $0\leq\psi\leq1$. For any $t\in[\frac{1}{2},\frac{5}{8}]$, one has
\begin{align}\label{6.7}
\|\tilde{u}_m\|_{L^1_x(\mathbb{T}^2)}=8\pi m,\quad\|\tilde{B}_m\|_{L^1_x(\mathbb{T}^2)}=16\pi m+m\int_{\mathbb{T}^2}|\sin x_1+\cos x_2|dx.
\end{align}

Using Theorem \ref{main result} with $\varepsilon_*=\frac{1}{100}$, there exists $(u_m,B_m)\in H^{\beta'}_x$ such that $(u_m,B_m)$ solves equation \eqref{Hall-MHD1} and
\begin{align}\label{6.8}
\|u_m-\tilde{u}_m\|_{L^1_x}\leq\varepsilon_*,\quad\|B_m-\tilde{B}_m\|_{L^1_x}\leq\varepsilon_*,\quad|\mathcal{H}_{B_m,B_m}-\mathcal{H}_{\tilde{B}_m,\tilde{B}_m}|\leq\varepsilon_*.
\end{align}
Then for any $m>m'$ and $t\in[\frac{1}{2},\frac{5}{8}]$, by \eqref{6.7} and \eqref{6.8}, we can infer that
\begin{align*}
|\|u_m\|_{L^1_x}-\|u_{m'}\|_{L^1_x}|&=|\|u_m\|_{L^1_x}-\|\tilde{u}_m\|_{L^1_x}+\|\tilde{u}_m\|_{L^1_x}-\|\tilde{u}_{m'}\|_{L^1_x}+\|\tilde{u}_{m'}\|_{L^1_x}-\|u_{m'}\|_{L^1_x}|\\
&\geq|\|\tilde{u}_m\|_{L^1_x}-\|\tilde{u}_{m'}\|_{L^1_x}|-|\|u_m\|_{L^1_x}-\|\tilde{u}_m\|_{L^1_x}|-|\|\tilde{u}_{m'}\|_{L^1_x}-\|u_{m'}\|_{L^1_x}|\\
&\geq8\pi(m-m')-2\varepsilon_*>0.
\end{align*}
Similarly, one has
\begin{align*}
|\|B_m\|_{L^1_x}-\|B_{m'}\|_{L^1_x}|>0,
\end{align*}
for $t\in[\frac{1}{2},\frac{5}{8}]$. Hence, we conclude that
\begin{align*}
(u_{m},B_{m})\neq(u_{m'},B_{m'})\;\; {\rm for}\;\;m\neq m'.
\end{align*}
Notice that ${\rm supp}_t \phi\subset[\frac{1}{4},\frac{3}{4}]$, then one has
\begin{align}\label{6.9}
u_m(0)=B_m(0)=0,
\end{align}
which implies that there are infinitely many different weak solutions of system \eqref{Hall-MHD1} with the same initial data.

Next, we prove that $(u_m,B_m)$ do not conserve the magnetic helicity. Let
$$\tilde{A}_m=m\psi(t)(\sin x_2,\cos x_1,-\sin x_1-\cos x_2)^T$$
be the potential field of $\tilde{B}_m$, that is $\nabla\times\tilde{A}_m=\tilde{B}_m$. Then one has
$$\mathcal{H}_{\tilde{B}_m,\tilde{B}_m}=8\pi^2m^2\psi(t)^2.$$
Thus for any $t\in [\frac{1}{2},\frac{5}{8}]$ and $m\in \mathbb{N}_+$, by \eqref{6.8}, we have
$$\mathcal{H}_{B_m,B_m}\geq8\pi^2m^2-\varepsilon_*>4\pi^2.$$
On the other hand, \eqref{6.9} implies that $\mathcal{H}_{B_m,B_m}(0)=0$. Therefore, we conclude that the magnetic helicity $\mathcal{H}_{B_m,B_m}$ is not conserve.

%%%%%%%%%%%%%%%%%%%%%%%%%%%%%%%%%%%%%%%%%%%%%%%%%%%%%%%%%%%%%%%%%%%%%%%%%%%%%%%%%%%%%%%%%%%%%%%%%%%%%%%%%%%%%%%%%%%%%%%%%%%%%%%%%%%%%%%%%%%%%%%%%%%%%%%%%%%%%%%%%%%%%%%%%%%%%%%%%%%%%%
%%%%%%%%%%%%%%%%%%%%%%%%%%%%%%%%%%%%%%%%%%%%%%%%%%%%%%%%%%%%%%%%%%%%%%%%%%%%%%%%%%%%%%%%%%%%%%%%%%%%%%%%%%%%%%%%%%%%%%%%%%%%%%%%%%%%%%%%%%%%%%%%%%%%%%%%%%%%%%%%%%%%%%%%%%%%%%%%%%%%%%
\subsection{Proof of Theorem \ref{thm1.4}}
For fixed $\bar{\beta}>0$, assume that a weak solution of the Hall-MHD equations $(u,B)$ belongs to $C_{t,x}^{\bar{\beta}}\times C_{t,x}^{\bar{\beta}}$. Let $\phi_\epsilon$, $\varphi_\epsilon$ be the standard 2D and 1D Friedrichs mollifiers. Set
\begin{align}\label{6.11}
u_n=\left(u\ast_x\phi_{\lambda_n^{-1}}\right)\ast_t\varphi_{\lambda_n^{-1}}, \quad B_n=\left(B\ast_x\phi_{\lambda_n^{-1}}\right)\ast_t\varphi_{\lambda_n^{-1}}.
\end{align}
Then $(u_n,B_n)$ solves {\small
\begin{equation}\label{Hall-MHDn}
\begin{cases}
\partial _tu_n+\nu_{1,n}(-\Delta)^{\alpha_1} u_n+{\rm div}(u_n\otimes u_n-B_n\otimes B_n)+\nabla p_n={\rm div}\mathring{R}_n^u, \\
\partial _tB_n+\nu_{2,n}(-\Delta)^{\alpha_2} B_n+{\rm div}(B_n\otimes u_n-u_n\otimes B_n)+\nabla\times{\rm div}\left( B_n\otimes B_n\right)=\nabla\times{\rm div}\mathring{R}_n^B,\\
{\rm div}u_n ={\rm div}B_n=0,
\end{cases}
\end{equation}}
\!\!where
$$\nu_{1,n}:=\lambda_n^{-2\alpha_1},\quad \nu_{2,n}:=\lambda_n^{-2\alpha_2},$$
and the symmetric traceless stresses $\mathring{R}_n^u$ and $\mathring{R}_n^B$ are given by
\begin{align}
&\mathring{R}_n^u=u_n\mathring{\otimes} u_n-B_n\mathring{\otimes} B_n-(u\mathring{\otimes} u-B\mathring{\otimes} B)\ast\phi_{\lambda_n^{-1}}\ast\varphi_{\lambda_n^{-1}}+\nu_{1,n}\mathcal{R}(-\Delta)^{\alpha_1} u_n,\label{6.13}\\
&\mathring{R}_n^B=B_n\mathring{\otimes} B_n-(B\mathring{\otimes} B)\ast\phi_{\lambda_n^{-1}}\ast\varphi_{\lambda_n^{-1}}+\nu_{2,n}\mathcal{R}{\rm curl}^{-1}(-\Delta)^{\alpha_2} B_n,\notag\\
&\quad\quad+\mathcal{R}{\rm curl}^{-1}{\rm div}\left(B_n\otimes u_n-u_n\otimes B_n-\left(B\otimes u-u\otimes B\right)\ast\phi_{\lambda_n^{-1}}\ast\varphi_{\lambda_n^{-1}}\right)\label{6.14},
\end{align}
and
\begin{align}\label{6.13}
p_n=p\ast\phi_{\lambda_n^{-1}}\ast\varphi_{\lambda_n^{-1}}+\frac{1}{3}(|u|^2-|B|^2)\ast\phi_{\lambda_n^{-1}}\ast\varphi_{\lambda_n^{-1}}.
\end{align}
By the commutator estimate \cite[Lemma 2.1]{cds}, the Calderon-Zygmund inequality, one has
\begin{align}\label{6.16}
\|\mathring{R}_n^u\|_{L^1_x}&\lesssim\|u_n\mathring{\otimes} u_n-B_n\mathring{\otimes} B_n-(u\mathring{\otimes}u-B\mathring{\otimes} B)\ast\phi_{\lambda_n^{-1}}\ast\varphi_{\lambda_n^{-1}}\|_{C^0_x}+\lambda_n^{-2\alpha_1}\|\mathcal{R}(-\Delta)^{\alpha_1} u_n\|_{L^2_x}\notag\\
&\lesssim\lambda_n^{-2\bar{\beta}}(\|u\|^2_{C_{t,x}^{\bar{\beta}}}+\|B\|^2_{C_{t,x}^{\bar{\beta}}})+\lambda_n^{-2\alpha_1}\lambda_n^{2\alpha_1-1}\|u\|_{L^2_x}\notag\\
&\lesssim\lambda_n^{-2\bar{\beta}}+\lambda_n^{-1}.
\end{align}
Similarly,
\begin{align}\label{6.17}
\|\mathring{R}_n^B\|_{L^1_x}&\lesssim\lambda_n^{-2\bar{\beta}}(\|u\|^2_{C_{t,x}^{\bar{\beta}}}+\|B\|^2_{C_{t,x}^{\bar{\beta}}})+\lambda_n^{-2\alpha_2}\lambda_n^{2\alpha_2-2}\|B\|_{L^2_x}\notag\\
&\lesssim\lambda_n^{-2\bar{\beta}}+\lambda_n^{-2}.
\end{align}
A similar argument implies that
\begin{align}
\|(\mathring{R}_n^u,\mathring{R}_n^B)\|_{C^1_{t,x}}&\lesssim\lambda_n^{-(2\bar{\beta}-1)}(\|u\|^2_{C_{t,x}^{\bar{\beta}}}+\|B\|^2_{C_{t,x}^{\bar{\beta}}})+\lambda_n^{-2\alpha_1}\lambda_n^{2\alpha_1}\|u\|_{C_{t,x}}+\lambda_n^{-2\alpha_2}\lambda_n^{2\alpha_2-1}\|B\|_{C_{t,x}}\notag\\
&\lesssim\lambda_n^{1-2\bar{\beta}}+1,\label{6.18}\\
\|(u_n,B_n)\|_{C^1_{t,x}}&\lesssim\lambda_n^{1-\bar{\beta}}\|(u,B)\|_{C^{\bar{\beta}}_{t,x}}\lesssim\lambda_n^{1-\bar{\beta}}.\label{6.20}
\end{align}
Thus, $(u_n,B_n,\mathring{R}_n^u,\mathring{R}_n^B)$ satisfies the inductive estimates \eqref{1.12}--\eqref{1.14} at level $q=n(\geq1)$ provided that $a$ is sufficiently large. Hence, we apply Theorem \ref{main iteration} to obtain a sequence of solutions $(u_{n,q},B_{n,q},\mathring{R}_{n,q}^u,\mathring{R}_{n,q}^B)_{q\geq n}$ satisfying \eqref{1.12}--\eqref{1.14}.

Setting $q\rightarrow\infty$, by the proof of Theorem \ref{main result}, we can obtain that a weak solution to the Hall-MHD system \eqref{Hall-MHD1} $(u^{\nu_{1,n}},B^{\nu_{2,n}})\in C^0_{t}H_x^{\beta'}\times C^0_{t}H_x^{\beta'}$  for some $\beta'\in (0,\frac{\beta}{4+\beta})$. Moreover, as in \eqref{6.4}, we get
\begin{align}\label{6.21}
\|u^{\nu_{1,n}}-u_n\|_{C^0_{t}H_x^{\beta'}}+\|B^{\nu_{2,n}}-B_n\|_{C^0_{t}H_x^{\beta'}}\leq\sum_{q=n}^{\infty}\lambda_{q+1}^{4\beta'}\lambda_{q+1}^{-\beta(1-\beta')}\leq\frac{1}{2n}.
\end{align}
Using the commutator estimate \cite[Lemma 2.1]{cds} again, by \eqref{6.21}, we have
\begin{align}\label{6.22}
\|u^{\nu_{1,n}}-u\|_{H_x^{\beta'}}\leq \|u^{\nu_{1,n}}-u_n\|_{H_x^{\beta'}}+\|u_n-u\|_{H_x^{\beta'}}\leq\frac{1}{2n}+C\lambda_n^{-(2\bar{\beta}-\beta')}\|u\|_{C^{\bar{\beta}}_{t,x}}\leq\frac{1}{n}
\end{align}
provided that $0<\beta'<\min\left\{\bar{\beta},\frac{\beta}{4+\beta}\right\}$. A similar argument implies that $\|B^{\nu_{2,n}}-B\|_{H_x^{\beta'}}\leq\frac{1}{n}$. Thus, the proof of Theorem \ref{thm1.4} is completed.

%%%%%%%%%%%%%%%%%%%%%%%%%%%%%%%%%%%%%%%%%%%%%%%%%%%%%%%%%%%%%%%%%%%%%%%%%%%%%%%%%%%%%%%%%%%%%%%%%%%%%%%%%%%%%%%%%%%%%%%%%%%%%%%%%%%%%%%%%%%%%%%%%%%%%%%%%%%%%%%%%%%%%%%%%%%%%%%

\section*{appendix}\label{sec7}
\begin{proof}[Proof of Lemma \ref{lem3.1}]
Let $e_1=(1,0,0)$, $e_2=(0,1,0)$, $e_3=(0,0,1)$, and let $\Lambda_1=\{\frac{3}{5}e_1\pm \frac{4}{5}e_2,\frac{4}{5}e_1\pm \frac{3}{5}e_3,\frac{3}{5}e_2\pm \frac{4}{5}e_3\}$. The orthonormal bases are given by
\begin{table}[H]
\begin{tabular}{c|c|c}
  $k$ & $A_k$  & $k\times A_k$  \\\hline
  $\frac{3}{5}e_1\pm \frac{4}{5}e_2$& $\frac{4}{5}e_1\mp \frac{3}{5}e_2$ & $e_3$\\
  $\frac{4}{5}e_1\pm \frac{3}{5}e_3$& $e_2$ & $\frac{3}{5}e_1\mp \frac{4}{5}e_3$\\
  $\frac{3}{5}e_2\pm \frac{4}{5}e_3$& $e_1$& $\frac{4}{5}e_2\mp \frac{3}{5}e_3$\\
\end{tabular}
\caption{}
\end{table}\label{tab1}\!\!\!\!\!\!\!\!\!\!
\noindent Then, we have $\sum_{k \in\Lambda_1}\frac{1}{2}k\otimes k={\rm Id}$. The implicit function theorem implies that there exists a small constant $\delta_0$ such that for all symmetric matrices $R$ with $|R-{\rm Id}|<\delta_0$, it holds
$$R=\sum_{k\in \Lambda_1}\gamma^2_{k}(R)(k\otimes k),$$
for some smooth functions $\gamma_k$ with $\gamma^2_{k}({\rm Id})=\frac{1}{2}$. For more details, see \cite{bdis}.

Similarly, take $\Lambda_2=\{\frac{5}{13}e_1\pm \frac{12}{13}e_2,\frac{12}{13}e_1\pm \frac{5}{13}e_3,\frac{5}{13}e_2\pm \frac{12}{13}e_3\}$, with the orthonormal bases are given by
\begin{table}[H]
\begin{tabular}{c|c|c}
  $k$ & $A_k$  & $k\times A_k$  \\\hline
  $\frac{5}{13}e_1\pm \frac{12}{13}e_2$& $\frac{12}{13}e_1\mp \frac{5}{13}e_2$ & $e_3$\\
  $\frac{12}{13}e_1\pm \frac{5}{13}e_3$& $e_2$ & $\frac{5}{13}e_1\mp \frac{12}{13}e_3$\\
  $\frac{5}{13}e_2\pm \frac{12}{13}e_3$& $e_1$& $\frac{12}{13}e_2\mp \frac{5}{13}e_3$
\end{tabular}
\end{table}\label{tab2}
\noindent Repeating the above process, we conclude that the same conclusion holds for $\Lambda_2$. Moreover, one has $\Lambda_{1}\cap\Lambda_{2}=\emptyset$.
\end{proof}
\underline{The definitions of $\tilde{k}$ and $\tilde{A_k}$.} In order to build the $2\frac{1}{2}$ dimensional intermittent flows, we define $\tilde{k}, \;\tilde{A_k}\in \mathbb{R}^2$ by the orthonormal base of the plane $\{k\times A_k=0\}$. More precisely, for $k\in \Lambda_1$, we define
\begin{table}[H]
\begin{tabular}{c|c|c}
  $k$ & $\tilde{k}$  & $\tilde{A_k}$  \\\hline
  $\frac{3}{5}e_1\pm \frac{4}{5}e_2$& $(\frac{3}{5},\pm\frac{4}{5})$ & $(\frac{4}{5},\mp\frac{3}{5})$\\
  $\frac{4}{5}e_1\pm \frac{3}{5}e_3$& $(1,0)$ & $(0,1)$\\
  $\frac{3}{5}e_2\pm \frac{4}{5}e_3$& $(0,1)$& $(1,0)$\\
\end{tabular}
\caption{}\label{tab3}
\end{table}
\noindent For $k\in \Lambda_2$, $\tilde{k}$ and $\tilde{A_k}$ are defined in a similar way. Moreover, one has $k\cdot (\tilde{A_k},0)=0$.

\section*{Acknowledgments}
Y. Peng is supported by the scientific research starting project of SWPU (Grant No. 2024QHZ018). H. Wang is supported by the National Natural Science Foundation of China (Grant No.12471208), the Natural Science Foundation of Chongqing (Grant No. CSTB2023NSCQ-MSX0396).

\noindent {\bf Statements and Declarations}

\noindent Conflicts of interest/Competing interests: The authors declare that they have no
competing interests.

\noindent {\bf Data availability}

\noindent Data sharing not applicable to this article as no datasets were generated or analysed during the current study.

\end{document}